\newcommand{\RNum}[1]{\uppercase\expandafter{\romannumeral #1\relax}}
\numberwithin{equation}{section}
\begin{document}

\myitemmargin 
\baselineskip =14.9pt plus 1.8pt

\def\conf{{\sf{Conf}}}
\def\o{{\omega}}

\title[Elliptic diffeomorphisms of symplectic $4$-manifolds]%
{Elliptic diffeomorphisms\\[3pt] of symplectic $4$-manifolds}


\author{Vsevolod Shevchishin}
\address{Faculty of Mathematics and Computer Science\\
University of Warmia and Mazury\\
ul.~Słoneczna 54, 10-710 Olsztyn, Poland
}
\email{shevchishin@gmail.com}
\thanks{}

\author{Gleb Smirnov}
\address{ETH Z\"{u}rich \\ R\"{a}mistrasse 101 \\ 8092 Z\"{u}rich, Switzerland}
\curraddr{}
\email{gleb.smirnov@math.ethz.ch}
\thanks{}

\subjclass[2010]{Primary: }

\date{}

\dedicatory{}

\begin{abstract} 
We show that symplectically embedded $(-1)$-tori give rise to certain elements in the symplectic mapping class group of $4$-manifolds. An example is given where such elements are proved to be of infinite order.
\end{abstract}

\maketitle

\tableofcontents

\setcounter{section}{-1}
\section{Introduction}\label{intro}

Let $(X,\omega)$ be a closed symplectic 4-manifold. Denote by $\pi_0 \symp(X,\omega)$ the group of symplectic diffeomorphisms of $X$ modulo symplectic isotopy. Let us consider the forgetful homomorphism
\[
\pi_0 \symp(X,\omega) \to \pi_0 \diff(X).
\]

Here $\pi_0 \diff(X)$ denotes the smooth mapping class group for $X$. It is known this homomorphism is not necessary injective. If $\Sigma$ is a smooth Lagrangian sphere in $X$, then there exists a symplectomorphism $T_{\Sigma} : X \to X$, called \slsf{symplectic Dehn twist along} $\Sigma$, such that $T_\Sigma^2$ is smoothly isotopic to the identity. In his thesis \cite{SeiTh}, Seidel proved that in many cases $T_\Sigma^2$ is not symplectically isotopic to the identity.
He than proved that for certain $K3$ surfaces containing two Lagrangian spheres $\Sigma_1$ and $\Sigma_2$, the element $T_{\Sigma_1}^2$ has infinite order, and hence the forgetful homomorphism has infinite kernel. The reader is invited to look at \cite{Sei2} for a detailed
description of symplectic Dehn twists.

Somewhat later, Biran and Giroux introduced different symplectomorphisms, namely the fibered Dehn twists, among which one can find smoothly yet not symplectically trivial maps. In fact, Seidel's Dehn twist is a particular case of a fibered Dehn twist. Suppose that that $X$ admits a separating contact type hypersurface $P$ carrying a free $S^1$-action in $P \times [0,1]$ that preserves the contact form on $P$. Then one can define the \slsf{fibered Dehn twist} as
\[
T_{P} \colon P \times [0,1] \to P \times [0,1], \quad (x,t) \to (x \cdot [f(t)\,\text{\normalfont{mod}}\,2 \pi], t),
\]
where a function $f \colon [0,1] \to \rr$ equals $2\pi$ near $t = 0$ and $0$ near $t = 1$. As $T_P$ is a symplectomorphism of $P \times [0,1]$ that is the identity near the boundary of $P \times [0,1]$, it can be extended to be a symplectomorphism of the whole $X$. We refer the reader to \cite{R-D-O,U} for an extensive study of fibered Dehn twists.

Given that it is easy to find a separating contact hypersurface, fibered Dehn twists make an effective tool to construct symplectomorphisms of a given $4$-manifold (and of a higher-dimensional manifold, for that matter.) But even though a plethora of results has been obtained in symplectic mapping class groups (see e.g. \cite{Ab-McD, Bu, Anj, Anj-Gr,Anj-Lec, Ev, La-Pin, LiJ-LiT-Wu,Ton, Sei1, Sei3, Wen}), it remains hard to detect nontriviality of symplectomorphisms.
\medskip%

In this paper we introduce and study a new type of symplectomorphisms for
4-manifolds. In short, our construction is as follows. Let  $(X,\omega_0)$  be a symplectic $4$-manifold which contains a symplectically embedded torus  $C\subset X$ of self-intersection $(-1)$. In particular, $\mu := \int_C \omega_0>0$. 
We construct a family of symplectic forms $\omega_t$ on $X$ in the cohomology class $[\omega_t]$ such that $\int_{C} \omega_t = \mu - t$. We show that such a family exists for $t$ large enough for $C$ to have a negative symplectic area.

For each $t$ we construct an $\omega_t$-symplectomorphism $T_{C} : X \to X$, called the \slsf{elliptic twist along} $C$. 
As smooth maps, those symplectomorphisms $T_{C}$ for different $t$ are isotopic,
so we can think of $T_C$ as a single diffeomorphism defined up to isotopy.

We then study whether or not these elliptic twist are symplectically isotopic to the identity. It appears that it is so in the case when $\int_C \omega_t > 0$. In particular, $T_C$ is always smoothly isotopic to the identity. As we shall see below, it is not so in the case $\int_C \omega_t \le 0$, and $T_C$ could be non-trivial.

Let $Y$ be a tubular neighbourhood of $C$ in $X$.
Then $\del Y$ is a separating contact hypersurface in $X$, which 
carries a free $S^1$-action. One can pick a symplectic form $\wt{\o}_0$ on $X$ such that $\omega_0|_{X-Y} = \wt{\omega}_0|_{X-Y}$ and $\int_{C} \wt{\o}_0 \leq 0$. We conjecture that, for $(X,\wt{\omega}_0)$, the fibered Dehn twist associated to $\del Y$ is symplectically isotopic to $T_C$.
\smallskip %

Our first result is an example of a $4$-manifold $X$ and a $(-1)$-torus $C$ in it, where the elliptic twist $T_C$ turns out to be always symplectically trivial.

\begin{thm}\label{thm-vanish}
Let $(X,\omega)$ be a symplectic ruled 4-manifold diffeomorphic to the total space of the non-trivial $S^{2}$-bundle over $T^2$; we denote it by $S^2 \tilde{\times} T^2$ for short. Then

\sli there is a symplectic form $\omega_0$ on $X$ which admits an $\omega_0$-symplectic $(-1)$-torus $C \subset X$, and the elliptic twist $T_{C}$ is well-defined.

\slii the forgetful homomorphism
$\pi_0 \symp(X,\omega) \to \pi_0 \diff(X)$ is injective
for any symplectic form $\omega$.
In particular, the elliptic twist $T_{C}$ 
is always symplectically isotopic to the identity.
\end{thm}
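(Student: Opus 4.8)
For part (i) the plan is to exhibit an explicit Kähler model rather than argue abstractly. I would realize $\sts$ as the projectivization $X=\pp(L\oplus\calo)$, where $L\to T^2$ is a holomorphic line bundle of degree $1$ over an elliptic curve and $\calo$ is the trivial bundle; since $\deg L$ is odd, the associated $\cp^1$-bundle carries nontrivial $w_2\in H^2(T^2;\zz/2)$ and so is the nontrivial $S^2$-bundle. The section determined by the subbundle $L\subset L\oplus\calo$ is a smoothly embedded torus $C$ with $C^2=-\deg L=-1$. Choosing a Kähler form $\omega_0$ in a suitable class, $C$ becomes an $\omega_0$-symplectic $(-1)$-torus with $\mu=\int_C\omega_0>0$, and then the construction of the paper applies verbatim to produce a well-defined elliptic twist $T_C$.

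For part (ii) the plan is to prove that $\symp(X,\omega)\hookrightarrow\diff(X)$ is $\pi_0$-injective for every $\omega$ by exploiting the $S^2$-ruling. Let $\phi\in\symp(X,\omega)$ be smoothly isotopic to the identity and fix an $\omega$-compatible $J_0$. The fiber class $F$ satisfies $F^2=0$ and $c_1(X)\cdot F=2$, so by Gromov--McDuff theory it is represented by a single embedded $J_0$-holomorphic sphere through each point; minimality of the fiber energy rules out bubbling, and $X$ is foliated by these spheres, giving a smooth ruling $p_0\colon X\to T^2$. The structure $J_1:=\phi^*J_0$ is again $\omega$-compatible and carries its own ruling, with $\phi$ sending $J_1$-leaves to $J_0$-leaves. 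Connecting $J_0$ to $J_1$ through $\omega$-compatible structures and feeding this path into the parametrized foliation theorem yields a symplectic isotopy $\Theta_s$ with $\Theta_0=\id$ relating the two rulings; composing $\phi$ with $\Theta_1$ then produces a symplectomorphism, still isotopic to $\phi$, that preserves the $J_0$-ruling $p_0$. So I may assume from now on that $\phi$ is ruling-preserving.

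Such a $\phi$ descends to a base diffeomorphism $\bar\phi$ of $T^2$. Because $\phi_*=\id$ on $H_*(X)$ and $p_0$ identifies $H_1(X)$ with $H_1(T^2)$, I get $\bar\phi_*=\id$ on $H_1(T^2)$; since $\pi_0\diff^+(T^2)=\Sl(2,\zz)$ acts faithfully on $H_1$, the map $\bar\phi$ is isotopic to $\id_{T^2}$. Lifting such a base isotopy by symplectic parallel transport along the ruling (whose leaves are symplectic) I may further arrange that $\phi$ covers $\id_{T^2}$, i.e. preserves every fiber. At this stage $\phi$ is a fiberwise symplectomorphism, equivalently a section over $T^2$ of the bundle with fiber $\symp(S^2)$.

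The remaining and, I expect, \emph{decisive} step is to show that this fiberwise symplectomorphism is symplectically isotopic through fiberwise maps to the identity. The key input is that $\symp(S^2,\text{area})\hookrightarrow\diff^+(S^2)$ is a homotopy equivalence, both sides retracting onto $\SO(3)$; by fiberwise obstruction theory over the finite complex $T^2$ this upgrades to a homotopy equivalence between the fiberwise symplectomorphism group and the fiberwise diffeomorphism group, so their component sets coincide. The hard part will be to convert the hypothesis ``$\phi$ is smoothly isotopic to the identity in $\diff(X)$'' into ``$\phi$ is trivial in the fiberwise smooth gauge group'' in a way compatible with the preceding reductions, controlling the interaction between the base $\Sl(2,\zz)$-data and the fiberwise $\SO(3)$-gauge data on the nontrivial bundle; granting this, triviality in the smooth fiberwise gauge group forces triviality in the symplectic one, which is exactly the desired symplectic isotopy of $\phi$ to the identity and hence the injectivity claim.
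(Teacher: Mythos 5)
There is a genuine gap, and it sits exactly at the step you yourself flag as ``decisive.'' Your reductions up to that point are sound and in fact parallel the paper's setup: reducing a symplectomorphism smoothly isotopic to the identity to a fiberwise map covering $\id_{T^2}$ corresponds to the paper's fibration $\diff_0(X) \to \fol_0(X)$ with fiber $\mathcal{D}\cap\diff_0(X)$ and the projection $\tau$, and the homotopy equivalence $\symp(S^2)\simeq \diff^{+}(S^2)\simeq \SO(3)$ is correct. But the implication you defer --- that ``$\phi$ smoothly isotopic to the identity in $\diff(X)$'' can be converted into ``$\phi$ trivial in the fiberwise smooth gauge group'' --- is \emph{false} on the non-spin bundle, so no bookkeeping of the $\Sl(2,\zz)$/$\SO(3)$ interaction will establish it. A fiberwise map twisted along a loop $\gamma\subset T^2$ (rotating the fibers through the generator of $\pi_1(\SO(3))=\zz_2$ as one traverses $\gamma$) is nontrivial in $\pi_0$ of the gauge group $\ker\tau$: the paper's obstruction homomorphism $c\colon \ker\tau \to \sfh^1(T^2;\zz_2)$ of \lemma{lem-obstr} is a complete invariant and is nonzero on such maps. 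Nevertheless, on $\sts$ these twisted maps \emph{do} lie in $\diff_0(X)$ --- this is precisely the content of \propo{prop-nonspin}, proved via the holomorphic torus action $\mathcal{T}$ on Suwa's surface $X_{\cala}$, whose $2$-torsion $\mathcal{T}_2\cong\zz_2\oplus\zz_2$ maps isomorphically under $c$ and is connected to the identity inside $\mathcal{D}\cap\diff_0(X)$ by isotopies that translate the base rather than fix the fibers. So after your reductions, $\phi$ may legitimately be a twisted gauge element; what must be shown is that the twisted elements are symplectically isotopic to the identity, and the isotopy necessarily moves the base. The torus action on the non-spin model is the missing idea. (On the spin bundle $S^2\times T^2$ your implication does hold, by the mapping-torus spin argument of \lemma{lem-twist} --- which shows the dichotomy is real and that your argument, as written, cannot see the difference between the two bundles.)

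Two secondary points. First, the paper never isotopes symplectomorphisms by hand: it runs the Abreu--McDuff sequence $\pi_1(\diff_0(X)) \xrightarrow{\nu_*} \pi_1(\calj(X,\Omega_{\mu})) \to \pi_0(\symp^*(X,\omega_{\mu})) \to 0$ and proves $\nu_*$ is onto (\lemma{lem-epiclemma}, plus, for $\mu\le 0$, $\ker i_*\subset \im\nu_*$ via the jumping family of \refsubsection{sec-family} and \lemma{lem-connectinglemma}); this packaging also absorbs the analytic issues in your intermediate steps, e.g.\ producing a symplectic isotopy $\Theta_s$ from a path of compatible structures, and your ``symplectic parallel transport'' lift --- parallel transport of a symplectic fibration is fiber-preserving but is not a symplectomorphism of the total space, so that step also needs a Moser correction. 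Second, in part \sli the phrase ``applies verbatim'' hides the verification of assumption $\sf{(A)}$ for the class $\bft_{-}$: one needs taming forms in a fixed class $\xi$ with $\xi\cdot\bft_{-}\le 0$ along the entire linking loop, which the paper obtains by inflating along the fiber class $\bff$ and the bisection class $\bfb$ (\lemma{lem-wc} and \lemma{lem-suka}, resting on $\gr(\bfb)=3$ and \propo{prop-smoothcurves}); the multiple-fiber blow-up construction of \refsubsection{Ell-tw-1} does not apply to the minimal surface $\sts$, so this verification cannot be imported ``verbatim.''
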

The injectivity property claimed in part \slii was proved previously by McDuff for $S^2 \times T^2$, see \cite{McD-B}.
We thus cover the remaining non-spin case and, therefore, prove the so-called symplectic isotopy conjecture for elliptic ruled surfaces, see \slsf{Problem 14} in \cite{McD-Sa-1}.

The main result of this note shows that it is possible for an elliptic twist to contribute nontrivially to a symplectic mapping class group.

\begin{thm}\label{thm-notvanish} 
Let $Z$ be $S^2 \tilde{\times} T^2 \, \# \, \overline{\mathbb{CP}^2}$.
There exist a symplectic form $\omega$ on $Z$ and three $\o$-symplectic $(-1)$-tori $C_1, C_2$, and $C_3$ in $Z$ such that the elliptic twists $T_{C_{i}}$ are well-defined and none of them is symplectically isotopic to the identity; each $T_{C_{i}}$ has infinite order in the symplectic mapping class group.
\end{thm}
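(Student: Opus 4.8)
The plan is to detect the classes $[T_{C_i}]\in\pi_0\symp(Z,\omega)$ by a parametrized Seiberg--Witten invariant, exploiting that $Z$ has $b^+(Z)=1$ (and $b_1(Z)=2$), so that the theory carries a nontrivial wall-crossing. First I would make the ambient data explicit. Realizing $Z$ as the blow-up of a ruled surface over $T^2$, I would produce, by running the general construction of the forms $\omega_t$ and the twists $T_C$ from the earlier sections, a symplectic form $\omega$ together with the three tori $C_1,C_2,C_3$, arranged so that $\int_{C_i}\omega<0$ for every $i$. Each $C_i$ then lies strictly beyond the wall $\{\alpha:\alpha\cdot C_i=0\}$, which is exactly the regime in which $T_{C_i}$ is a genuine $\omega$-symplectomorphism while being smoothly isotopic to the identity; in particular each $T_{C_i}$ acts trivially on $H^{*}(Z)$.

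Next, fixing a smooth isotopy $\phi^{(i)}_s$ from $\id$ to $T_{C_i}$, I would form the loop of symplectic forms $\gamma_i(s):=(\phi^{(i)}_s)_*\omega$ based at $\omega$. This is a genuine loop because $T_{C_i}^{*}\omega=\omega$, and it has constant cohomology class because $\phi^{(i)}_s$ is smoothly trivial. Nontriviality of $[\gamma_i]$ in $\pi_1$ of the space of symplectic forms cohomologous to $\omega$ obstructs $T_{C_i}$ from being symplectically isotopic to the identity. Following Kronheimer's method of detecting such families by Seiberg--Witten theory, I would attach to $\gamma_i$ a families invariant $\Phi_i\in\zz$, using a $\mathsf{Spin}^{c}$-structure $\mathfrak{s}_i$ whose period is pinned by $C_i$ and for which, after the usual cutting-down by the $H^{1}(Z)$-action, the parametrized moduli space over $S^{1}$ is zero-dimensional; since the period stays constant and off the wall along $\gamma_i$, no reducibles appear and $\Phi_i$ is well defined.

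The heart of the argument, and the step I expect to be hardest, is the nonvanishing $\Phi_i\neq 0$. The plan is to compute $\Phi_i$ by bounding $\gamma_i$ with a disk in the larger space of \emph{all} symplectic forms, along which the cohomology class is allowed to move by the defining deformation, with $\int_{C_i}\omega_t=\mu_i-t$, so that the period crosses the single wall determined by $\mathfrak{s}_i$ exactly once. The families invariant of the boundary loop then equals the $b^{+}=1$ wall-crossing term for $\mathfrak{s}_i$, which is a nonzero integer. Making this rigorous requires identifying the Seiberg--Witten basic classes of $Z$ and controlling the chamber structure; here I would invoke Taubes' $\mathsf{SW}=\mathsf{Gr}$ together with the explicit ruled geometry to guarantee that exactly one wall contributes and that its crossing contribution does not vanish.

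Finally, infinite order follows from additivity. Families Seiberg--Witten invariants add under concatenation of loops, and since $T_{C_i}$ fixes the homotopy class $[\gamma_i]$ the loop attached to $T_{C_i}^{\,n}$ is homotopic to $\gamma_i^{\,n}$; hence $\Phi_i(T_{C_i}^{\,n})=n\,\Phi_i(T_{C_i})$. As this is nonzero for every $n\neq 0$, no nonzero power of $T_{C_i}$ is symplectically isotopic to the identity, so each $T_{C_i}$ has infinite order in $\pi_0\symp(Z,\omega)$. The same three invariants, attached to the distinct $\mathfrak{s}_i$, would moreover exhibit the three classes as independent, although the statement only asserts the individual infinite order.
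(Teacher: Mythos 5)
Your proposal is a genuinely different route from the paper's (which never touches Seiberg--Witten families: it works inside the Abreu--McDuff diagram \eqref{d-kron}, builds the moduli space $\calm_{3B}$ of \emph{triples} of bisections refined by the images $G_k\subset \sfh_1(Z;\zz)$, maps it to the configuration space of $6$ points modulo $\Aff(1,\cc)$, and detects the loops by integrating Arnold's $1$-form $\mib{\alpha}$, getting $-1$ on the loop and hence $-n$ on its $n$-th power). Unfortunately, the heart of your argument has a concrete gap. On $Z$ one has $2\chi+3\sigma=-1$, so every $\mathrm{Spin}^c$ structure has \emph{even} index $d(\mathfrak{s})=(c_1(\mathfrak{s})^2+1)/4$; the structures whose reducible wall is the one your disc crosses (note $K_Z=-(\bfb-\bfe)$, so the wall $\{[\omega_t]\cdot(\bfb-\bfe)=0\}$ is seen only by $c_1(\mathfrak{s})$ proportional to $K_Z$) have $d=0$ or $d=-2$, never $d=-1$. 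So the $S^1$-parametrized moduli space is not zero-dimensional on the nose; after cutting by an $H^1$ $\mu$-class you are no longer in the regime of the standard $b^+=1$ wall-crossing formula you invoke, but in a two-parameter problem with a $T^{b_1}=T^2$ of reducibles. Worse, because $b^+(Z)=1$ the reducible locus has codimension one in the parameter space: along your bounding disc the reducibles occur on a \emph{circle} of parameters, not at an isolated point, so ``the period crosses the single wall exactly once'' does not produce an isolated wall-crossing contribution, and the identity [families invariant of the loop] $=$ [wall-crossing term] has no a priori meaning. In fact, within the fixed class $[\omega]$ the chamber is fixed, and the space of reducible-free data fibers over the (contractible) space of metrics with convex fibers; any honest, homotopy-invariant parametrized count of the type you describe therefore vanishes on all loops of cohomologous forms. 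What is needed is a linking-type invariant against a codimension-two locus --- exactly what the paper manufactures with the divisorial locus $\cald_{\bfb-\bfe}\subset\calj$ and the curve-counting transversality argument of \lemma{lem-key1}; for Seiberg--Witten theory at $b^+=1$, $b_1=2$ no such invariant is available off the shelf, and constructing one is a substantial project, not a step one can take for granted.

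A second, smaller error: your closing claim that the three invariants are attached to ``distinct $\mathfrak{s}_i$'' is false. The three tori are homologous, $[C_1]=[C_2]=[C_3]=\bfb-\bfe$, so the Taubes $\mathrm{Spin}^c$ structures coincide, and no Seiberg--Witten-type datum can separate the three twists. The tori are distinguished only by the images $\im\bigl(\sfh_1(C_i;\zz)\to\sfh_1(Z;\zz)\bigr)$, three pairwise different index-two subgroups --- this $\pi_1$-level information is precisely why the paper introduces the refined Gromov invariants $\gr(\bfb,G_k)$ and the moduli space $\calm_{3B}$ rather than using gauge theory. Your setup (realizing the twists via the deflation construction so that $\int_{C_i}\omega<0$, forming the loop $\gamma_i(s)=(\phi^{(i)}_s)_*\omega$, and the additivity argument $\Phi_i(T_{C_i}^{\,n})=n\,\Phi_i(T_{C_i})$ for infinite order) is sound in outline and mirrors the paper's use of an additive $\rr$-valued pairing; but as it stands the nonvanishing $\Phi_i\neq 0$ --- which you correctly flag as the hardest step --- is not merely hard but undefined in the stated framework.
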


Our proof follows closely to the ideas introduced by Abreu-McDuff in \cite{Ab-McD} and McDuff in \cite{McD-B}.

The main technique we use in the proof is Gromov's theory of pseudoholomorphic curves. 
This theory involves various Banach manifolds and constructions with them. Dealing with them we often pretend to be in the finite-dimensional case. We refer the reader to the book \cite{Iv-Sh-1}  and articles \cite{Iv-Sh-2,Iv-Sh-3} for a comprehensive analytic setup to Gromov's theory of pseudoholomorphic curves. Of course,  reader is free to address to any of numerous alternative sources and expositions of the theory such as \cite{McD-Sa-3} or the seminal paper \cite{Gro}. 

\state Acknowledgements.  
We are deeply indebted to Boris Dubrovin, Yakov Eliashberg, and Viatcheslav Kharlamov for a number of useful suggestions which were crucial for the present exposition of this paper.
Part of this note was significantly improved during our stay at the University of Pisa and the Humboldt University of Berlin, and we are very grateful to Paolo Lisca and Klaus Mohnke for numerous discussions and for the wonderful research environment they provided. We also would like to thank Rafael Torres for reading the manuscript and pointing out certain inconsistencies.
Special thanks to Dasha Alexeeva for sending us a preprint of her thesis.
Finally, we are grateful to the referee for a positive comment on our paper, for her/his constructive and thorough criticism, and for the tremendous amount of work he/she did reviewing the manuscript. The second author was supported by an ETH Fellowship.

\section{Construction of the elliptic twist.}

\subsection{Elliptic twist}
\label{Ell-tw-1}
Let $(X,\omega)$ be a symplectic $4$-manifold, and let $C$ 
be an embedded symplectic $(-1)$-torus in $X$. 
We let $\Omega(X,\o)$ to denote the space of symplectic forms on $X$ that are isotopic to $\o$, 
and let $\calj(X,\Omega)$ to denote the space of almost-complex structures for which there 
exists a taming form in $\Omega(X,\o)$.

Pick an almost-complex structure $J_0 \in \calj(X,\Omega)$ for which $C$ 
is pseudoholomorphic. One thinks of $J_0$ as a point of the subspace $\cald_{[C]} \subset \calj(X,\Omega)$ of those almost-complex structures which admit a smooth pseudoholomorphic curve in the class $[C]$. 
In what follows, we refer to $\cald_{[C]}$ as the \slsf{elliptic divisorial locus} for the class $[C]$.
The term \emph{divisorial locus} is taken from the fact that in some neighbourhood of $J_0$ the subspace $\cald_{[C]}$ locally behaves as a submanifold of $\calj(X,\Omega)$ of real codimension $2$, see e.g. \cite{Iv-Sh-1}.

Let $\Delta \subset \calj(X,\Omega)$ be a small disc transverally intersecting $\cald_{[C]}$ precisely at $J_0$, and let $J \colon [0,1] \to \Delta$ be the boundary of $\Delta$.
We will make the following assumption:
\smallskip%

$\sf{(A)}$ There exists a class $\xi \in \sfh_2(X;\rr)$, $\xi \cdot [C] \leq 0$ such that 
every $J(t) \in \del \Delta$ is tamed by some symplectic form $\theta_t$, $[\theta_t] = \xi$. 
\smallskip%

One can arrange $\theta_t$ so that they depend smoothly on $t$. 
Moser isotopy then gives us a path of diffeomorphisms $f_t \colon X \to X$, $f_t^{*} \theta_t = \theta_0$.
Now $f_1$ is a symplectomorphism of $(X,\theta_0)$. 
We call $f_1$ the \slsf{elliptic twist along $C$} and use the notation $T_C$ for it.

As we will explain below (see \refsubsection{sec}), for $T_C$ to give a non-trivial element in 
$\pi_0\symp(X, \theta_0)$, it is necessary that $[J(t)] \in \pi_1(\calj(X,\Theta))$ is non-trivial; 
here $\Theta$ stands for the space of symplectic forms on $X$ that are isotopic to $\theta_0$. 
We emphasize that $J(t)$ is contractible in $\calj(X,\Omega)$; 
thus, $T_C$ is trivial for $(X,\omega)$.

Assumption $\sf{(A)}$ always holds, though we do not prove it in the full generality.
But we shall consider a series of $4$-manifolds for which the assumption is 
easy to verify. Let $(X,\o)$ be a symplectic $4$-manifold, and let $C$ be a symplectic 
torus of self-intersection number $\sf{0}$. 
Take an $\o$-tamed almost-complex structure on $X$ for which $C$ becomes pseudoholomorphic, 
and then perturb this structure slightly to make it integrable in some tubular 
neighbourhood of $C$. More precisely, we want a sufficiently small neighbourhood of $C$ to admit 
an elliptic fibration with $C$ being a multiple fiber of multiplity $ m > 1$.

Let $T^2$ be an elliptic curve $\cc/\zz \tau_1 \oplus \zz \tau2$, where $(\tau_1,\tau_2)$ 
form a basis for $\cc(u)$ as a real vector space, and let $\Delta$ be a complex disc with a local parameter $z$.
The neighbourhood of $C$ in $X$ is biholomorphic to the quotient 
$\Delta \times T^2/\sim$, where $(z,u) \sim (z e^{2 \pi i/m }, u + \tau_1/m)$. 
Here $C$ is given by the equation $\left\{ z = 0 \right\}$.

Blowing-up $X$ at a point $(0,u_0)$, we get a manifold $Z$ which 
contains a smooth elliptic curve in the class $[C] - \bfe$ 
(the strict transform of $C$.)
Here $\bfe$ stands for the homology class of the exceptional line.
Unless $z_0 = 0$, the blow-up of $X$ at $(z_0,u_0)$ does not contain such a curve,
since it contains one in the class $m [C] - \bfe$ (the strict transform of 
$\left\{ z = z_0 \right\}$, which we denote by $C_m$.)

Pick a taming symplectic form $\o_0$ on $Z$. 
Clearly, the form satisfies 
\[
\int_{[C]-\bfe} \o > 0.
\]

Let $Z(t)$ be the blow-up of $X$ at $(R\, e^{i t},u_0)$.
Observe that the complex structures on $Z(t)$ are $\o$-tamed for $R$ sufficiently small.
Using the deflation (see \refsubsection{sec-econom}) along $C_m$, 
we deform $\o$ on $Z(t)$ into a symplectic form $\theta_t$ for which
\[
\int_{[C_m]} \theta_t = \varepsilon
\]
for $\varepsilon$ positive arbitrary small. 
Implementing deflation does not violate the taming condition for $Z_t$.
Being performed in a small neighbourhood of $C_m$, the deflation does not affect 
the symplectic area of $C$, see \cite{Bu}. Since
\[
\int_{[C]-\bfe} \theta_t = \varepsilon - (m-1) \int_{[C]} \omega,
\]
one may take $m$ sufficiently large to make the area of $[C] - \bfe$ as negative as desired.
We have now verified $\sf{(A)}$ for the family $Z(t)$.

\subsection{The Abreu-McDuff framework.}\label{sec} Let $\diff_{0}(X)$ be the identity component of the diffeomorphism group of $X$ and 
$\Omega_{}(X,\omega)$ be the space of all symplectic forms on $X$ that are isotopic to $\omega$. We have a natural transitive action of $\diff_0(X)$ on $\Omega(X,\omega)$. So we get a principle fiber bundle
\begin{equation}\label{FundDiag}
\symp(X,\omega) \cap \diff_{0}(X) \to \diff_{0}(X) \to \Omega(X,\omega),
\end{equation}
where the last arrow stands for the map
\[
\varphi: \diff_{0}(X) \to \Omega(X,\omega) 
\qquad\text{with}\qquad 
\varphi: f \mapsto f_{*} \omega.
\]
To shorten notation, 
we put:
\[
\symp^*(X,\omega) := \symp(X,\omega) \cap \diff_{0}(X)
\]
Following \cite{Kh}, we consider an exact sequence of homotopy groups 
\[
\ldots \to \pi_1(\diff_0(X)) \xrightarrow{\;\varphi_{*}\;} \pi_1(\Omega(X,\omega)) \xrightarrow{\;\del\;} \pi_0(\symp^*(X,\omega)) \xrar{\;\;} 1= \pi_0(\diff_0(X)).
\]

Let $\calj(X,\Omega)$ be the space of those almost-complex structures $J$ on $X$ 
for which there exists a taming symplectic form $\omega_{J} \in \Omega(X,\omega)$. 
It is easy to see that $\calj(X,\Omega)$ is connected. Let $J_0$ be some $\omega$-tamed almost-complex structure.
It was shown by McDuff, see \slsf{Lemma 2.1} in \cite{McD-B}, that there exists a homotopy equivalence 
$\psi \colon \Omega(X,\omega) \to \calj(X,\Omega)$ 
for which the diagram
\begin{equation}\label{mcduff-map}
\begin{tikzcd}
\diff_{0}(X) \arrow{r}{\varphi} \arrow{dr}{\nu} & \Omega(X,\omega) \arrow{d}{\psi} \\
{} & \calj(X,\Omega)
\end{tikzcd}
\end{equation}
commutes. Here $\nu \colon \diff_{0}(X) \to \calj(X,\Omega)$ is given by $\nu: f \mapsto f_{*} J_0$.

Following the fundamental idea of Gromov's theory \cite{Gro} we study the space $\calj(X,\Omega)$ rather than $\Omega(X,\omega)$. We see from the following diagram
\begin{equation}\label{d-kron}
\begin{CD}
\ldots @>>> \pi_1(\diff_0(X)) @>{\varphi_{*}}>>  \pi_1(\Omega(X,\omega)) @>{\del}>> \pi_0(\symp^*(X,\omega)) @>>> 0 \\
&& @V{\text{\slsf{id}}}VV @V{\psi_{*}}VV \\
\ldots @>>> \pi_1(\diff_0(X)) @>{\nu_{*}}>> \pi_1(\calj(X,\Omega)),
\end{CD}
\end{equation}
that each loop in $\calj(X,\Omega)$ contributes non-trivially to the symplectic mapping class group of $X$, provided this loop does not come from $\diff_0(X)$.
We will use diagram \eqref{d-kron} to prove both \refthm{thm-vanish} and \refthm{thm-notvanish}. The reader is referred to \cite{McD-B} for more extensive discussion of the topic.

In what follows we work with a slightly bigger space $\calj^{k}(X,\omega)$ of $C^{k}$-smooth almost-complex structures.
Here and below \enquote{$C^{k}$-smoothness} means some $C^{k,\alpha}$-smoothness with $0 < \alpha < 1$ and $k$ natural sufficiently large.
The reason to do this is that the space $\calj^{k}(X,\omega)$ is a Banach manifold, while the space of $C^{\infty}$-smooth structures $\calj(X,\Omega)$ is merely Fréchet.
What we prove for $\pi_{i}(\calj^{k}(X,\omega))$ works perfectly for $\pi_{i}(\calj(X,\Omega))$ because the inclusion
$\calj(X,\Omega) \hookrightarrow \calj^{k}(X,\omega)$ induces the weak homotopy equivalence 
$\pi_i( \calj(X,\Omega) ) \to \pi_i( \calj^{k}(X,\omega) )$.

\subsection{Symplectic economics.}\label{sec-econom}

Here we give a brief description of the inflation technique developed by Lalonde-McDuff \cite{La-McD, McD-B}, and a generalization of this procedure given by Bu\c se, see \cite{Bu}.
\begin{thm}[Inflation]\label{thm-Infla}
Let $J$ be an $\omega_0$-tamed almost
complex structure on a symplectic $4$-manifold $(X, \omega_0)$ that
admits an embedded $J$-holomorphic curve $C$ with $[C]\cdot [C] \ge 0$. 
Then there is a family $\omega_s, s \ge 0,$ of symplectic forms  that all tame $J$ and have cohomology class 
\[
[\omega_s] = [\omega_0] + s\, \pd([C]),
\]
where $\pd([C])$ is Poincar\'e dual to $[C]$.
\end{thm}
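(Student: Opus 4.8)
The plan is to reduce the whole statement to the construction of a single auxiliary $2$-form. Concretely, I would look for a closed $2$-form $\tau$ on $X$ with three properties: (i) its de Rham class is $[\tau]=\pd([C])$; (ii) $\tau$ is supported in an arbitrarily thin tubular neighbourhood $Y$ of $C$; and (iii) $\tau$ is \emph{$J$-nonnegative}, i.e. $\tau(v,Jv)\ge 0$ for every tangent vector $v$. Granting such a $\tau$, the family $\omega_s:=\omega_0+s\,\tau$, $s\ge 0$, does everything at once: it is closed, its class is $[\omega_0]+s\,\pd([C])$, and for each $v\neq 0$ one has $\omega_s(v,Jv)=\omega_0(v,Jv)+s\,\tau(v,Jv)>0$, since $\omega_0$ tames $J$ and $s\ge0$. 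A closed $2$-form taming $J$ is automatically nondegenerate, hence symplectic, so every $\omega_s$ is a symplectic form taming $J$, as required. Thus the real content is the existence of $\tau$.

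To build $\tau$ I would work in the normal bundle. Because $C$ is embedded and $J$-holomorphic, $J$ makes the normal bundle $\pi\colon N\to C$ a complex line bundle whose degree equals the self-intersection number $[C]\cdot[C]=k\ge 0$. Fix a Hermitian metric and, via Hodge theory, a Hermitian connection whose curvature form $\mathfrak e$ is the harmonic representative of the Euler class, that is, a constant multiple of the $J$-area form of $C$ with $\int_C\mathfrak e=k$. This is the one place where the hypothesis $k\ge 0$ enters in an essential way: it forces $\mathfrak e\ge 0$. Identifying $Y$ with a small disc bundle in $N$, let $r$ be the radial coordinate and $\phi$ the global angular form, so $\int_{\text{fibre}}\phi=1$ and $d\phi=-\pi^{*}\mathfrak e$. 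Choosing a smooth cutoff $\chi$ that increases monotonically from $\chi(0)=-1$ to $\chi\equiv 0$ for $r\ge 1$, I set
\[
\tau \;=\; d\bigl(\chi(r^{2})\,\phi\bigr)\;=\;\chi'(r^{2})\,d(r^{2})\wedge\phi \;-\;\chi(r^{2})\,\pi^{*}\mathfrak e .
\]
A direct Bott--Tu computation shows that $\tau$ extends smoothly across the zero section (the apparent singularity of $\phi$ cancels because $r\,dr\wedge d\theta$ is smooth), is supported in $Y$, and has fibre integral $\chi(\infty)-\chi(0)=1$; hence it represents the Thom class of $N$, which is $\pd([C])$. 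This secures (i) and (ii).

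It remains to verify (iii), which I expect to be the main obstacle. Splitting $TY$ into the connection's vertical and horizontal distributions, the summand $\chi'(r^{2})\,d(r^{2})\wedge\phi$ contains the fibrewise area form (a nonnegative multiple, since $\chi'\ge 0$), which is $J$-positive on the vertical $J$-lines, while the summand $-\chi(r^{2})\,\pi^{*}\mathfrak e$ is $J$-nonnegative on horizontal directions precisely because $\chi\le 0$ and $\mathfrak e\ge 0$. On $C$ itself both the singular issue and the vertical term drop out and $\tau|_C=\mathfrak e\ge0$. The delicate point is that a general $J$-line is neither vertical nor horizontal and that $d(r^{2})\wedge\phi$ also carries a mixed vertical--horizontal piece built from the connection $1$-form, which has indefinite sign.

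I would dispose of this by adapting all choices to $J$ and then shrinking. Taking $J$ to agree along $C$ with the split Hermitian model on $N$ (legitimate, since $J$ preserves $TC\oplus N$ on the zero section), choosing the metric and connection compatibly, and passing to a disc bundle of small radius $\delta$, one finds that the $J$-positive fibre term is of unit order (the factor $r\,d\theta$ stays bounded) whereas the mixed term carries an extra factor $r\le\delta$; hence on the compact sphere bundle of $Y$ the indefinite mixed contribution is dominated by the positive diagonal ones once $\delta$ is small. A continuity/compactness argument then upgrades this to $\tau(v,Jv)\ge 0$ everywhere on $Y$, establishing (iii). With $\tau$ in hand, $\omega_s=\omega_0+s\,\tau$ is the desired family, completing the proof.
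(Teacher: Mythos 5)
The paper itself gives no proof of this theorem: it is stated as background in \S\ref{sec-econom} and quoted from Lalonde--McDuff \cite{La-McD, McD-B} and Bu\c se \cite{Bu}, so your proposal can only be measured against the standard literature argument. Its architecture you have reproduced faithfully: reduce everything to a closed form $\tau$ with $[\tau]=\pd([C])$, supported in a tube around $C$ and satisfying $\tau(v,Jv)\ge 0$; realize $\tau$ as a Thom form $d\bigl(\chi(r^2)\,\phi\bigr)$ of the normal bundle; and use $[C]\cdot[C]=k\ge 0$ to make the curvature term nonnegative. You are also right that pointwise nonnegativity of $\tau$ is not optional: letting $s\to\infty$ in $\omega_0(v,Jv)+s\,\tau(v,Jv)>0$ forces it, so your step (iii) carries the entire weight of the theorem.

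That is exactly where your proof has a genuine gap. The final argument --- the mixed terms carry a factor $r\le\delta$, hence are ``dominated by the positive diagonal ones once $\delta$ is small,'' upgraded by ``a continuity/compactness argument'' --- fails because the diagonal terms are only \emph{semi}-positive and degenerate precisely where you need them. Quantitatively: the vertical term contributes $\sim\chi'(r^2)\,|v^{\mathrm v}|^2$, the horizontal term $\sim|\chi(r^2)|\,\mathfrak{e}(v^{\mathrm h},Jv^{\mathrm h})$, while the deviation of $J$ from the split model (which you can normalize away along $C$, but only to order $r$) produces cross terms of size $\chi'(r^2)\,O(r)\,|v^{\mathrm v}||v^{\mathrm h}|$. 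Absorbing these by the arithmetic--geometric mean inequality requires $(\chi' r)^2\lesssim \chi'\,|\chi|\,k$, i.e. $\chi'/|\chi|\lesssim k/r^2$; integrating this differential inequality shows $|\chi|$ can never reach $0$, contradicting compact support of the cutoff --- so nonnegativity fails near the outer edge of the tube no matter how small $\delta$ is. Worse, when $k=0$ the harmonic curvature is $\mathfrak{e}\equiv 0$, there is no horizontal positivity at all, and choosing $|v^{\mathrm v}|\sim r\,|v^{\mathrm h}|$ makes $\tau(v,Jv)<0$ outright. A compactness argument cannot rescue a semi-definite form against perturbations of indefinite sign: strict domination is available only where the dominating terms are bounded below, which is false on the degenerate locus. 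The published proofs close this hole not by shrinking and estimating but by making the dangerous cross terms vanish \emph{identically}: the tubular fibration and the radial and angular forms are adapted to $J$ (fibres $J$-holomorphic, splitting $J$-invariant along the relevant directions), which is the genuinely delicate geometric input of the inflation lemma --- see Lalonde--McDuff \cite{La-McD} and the later careful treatment of inflation by McDuff and Opshtein. Your proposal, as written, replaces that input with an estimate that is false at the degenerate locus, so step (iii) --- which you yourself flag as ``the main obstacle'' --- remains unproved.
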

For negative curves a somewhat reverse procedure exists, called negative inflation or deflation.

\begin{thm}[Deflation]\label{thm-Defla}
Let $J$ be an $\omega_0$-tamed almost
complex structure on a symplectic $4$-manifold $(X, \omega_0)$ that
admits an embedded $J$-holomorphic curve $C$ with $[C]\cdot [C] = -m$. Then there is a
family $\omega_s$ of symplectic forms  that all tame $J$ and have
cohomology class 
\[
[\omega_s] = [\omega_0] + s\, \pd([C])
\]
for all $0 \leq s < \dfrac{\omega_0([C])}{m}$.
\end{thm}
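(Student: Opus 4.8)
The plan is to produce the family $\omega_s$ by adding to $\omega_0$ a single closed $2$-form, supported in a tubular neighbourhood of $C$ and Poincar\'e dual to $[C]$; the entire difficulty lies in arranging that this added form keeps $J$ tamed over the stated range of $s$. Since $C$ is an embedded $J$-holomorphic curve, its normal bundle $\nu_C$ inherits a complex line bundle structure over $C$ whose degree equals $[C]\cdot[C]=-m$. First I would fix a Hermitian metric and a compatible connection on $\nu_C$ and identify a tubular neighbourhood $N$ of $C$ with a small disc subbundle of $\nu_C$, by a diffeomorphism that is the identity on $C$ and complex-linear on each normal fibre, so that on $N$ the structure $J$ agrees with the model almost-complex structure of the disc bundle up to terms vanishing along $C$.

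Next I would build the dual form. Using polar coordinates $(r,\vartheta)$ on the fibres and the global angular $1$-form $\eta$ of the connection (so that $d\eta=-\pi^*\beta$, where $\beta$ is a curvature $2$-form on $C$ with $\tfrac{1}{2\pi}\int_C\beta=-m$), the Thom-type form
\[
\rho \;=\; -\tfrac{1}{2\pi}\, d\!\left(\chi(r)\,\eta\right),
\]
with a cut-off $\chi$ satisfying $\chi(0)=1$, $\chi'(0)=0$ and $\chi\equiv 0$ near $\partial N$, is closed, is supported in $N$, and represents $\pd([C])$ (its integral over a fibre disc is $+1$ and over $C$ is $-m$). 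The crucial normalization is to take the curvature proportional to the area form of $C$, namely $\tfrac{1}{2\pi}\beta=-\tfrac{m}{\omega_0([C])}\,\omega_0|_C$, so that on the zero section one has exactly $\rho|_{TC}=-\tfrac{m}{\omega_0([C])}\,\omega_0|_{TC}$.

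The heart of the argument is the pointwise taming estimate. Writing $Q_\alpha(v):=\alpha(v,Jv)$ for the symmetric form attached to a $2$-form $\alpha$, I would show that $\rho$ can be arranged to satisfy the pointwise inequality
\[
Q_\rho \;\ge\; -\tfrac{m}{\omega_0([C])}\, Q_{\omega_0}
\]
throughout $N$. Granting this, $Q_{\omega_s}=Q_{\omega_0}+sQ_\rho\ge\bigl(1-\tfrac{sm}{\omega_0([C])}\bigr)Q_{\omega_0}$, which is positive definite exactly for $0\le s<\omega_0([C])/m$, so every such $\omega_s$ tames $J$, has the asserted cohomology class $[\omega_0]+s\,\pd([C])$, and equals $\omega_0$ outside $N$; that is the whole statement, and the upper bound on $s$ drops out automatically.

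The inequality is immediate on the normal fibres, where the vertical part $-\tfrac{1}{2\pi}\chi'(r)\,dr\wedge\eta$ of $\rho$ is fibre-positive (as $\chi'\le 0$), and it is saturated on $TC$ by the choice of $\beta$. The main obstacle is therefore to control the mixed $dr\wedge\eta$ contribution on the tilted $J$-lines that are neither vertical nor tangent to $C$. I expect to handle this exactly as in the Lalonde--McDuff inflation construction: normalize $\omega_0$ near $C$ by the symplectic neighbourhood theorem so that it takes the fibred form $\pi^*(\omega_0|_C)+(\text{fibrewise area})$ and take the connection on $\nu_C$ to be $\omega_0$-compatible; then, since $J$ tames $\omega_0$ and coincides with the model structure along $C$, the symmetric form $Q_\rho+\tfrac{m}{\omega_0([C])}Q_{\omega_0}$ is nonnegative on $C$, and by shrinking the radius of $N$ and steepening the cut-off $\chi$ it stays nonnegative on all of $N$. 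This is precisely the same machinery that proves the Inflation theorem above; the only difference is the sign of $[C]\cdot[C]$, which turns the unconstrained range $s\ge 0$ into the bounded range $0\le s<\omega_0([C])/m$.
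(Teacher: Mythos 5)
The paper itself contains no proof of this statement: \refthm{thm-Defla} is quoted as Bu\c se's negative-inflation theorem \cite{Bu}, a variant of the Lalonde--McDuff inflation \cite{La-McD,McD-B}, so your proposal has to be measured against the cited construction rather than against an in-paper argument. Your outline is indeed that construction: a Thom form $\rho$ for the normal bundle $\nu_C$, supported in a tubular neighbourhood, with curvature part normalized to be proportional to $\omega_0|_C$, so that $Q_{\omega_s}\ge\bigl(1-\tfrac{sm}{\omega_0([C])}\bigr)Q_{\omega_0}$ and the bound $s<\omega_0([C])/m$ falls out. The skeleton, the sign bookkeeping ($\int_{\text{fibre}}\rho=1$, $\int_C\rho=-m$), and the reduction of everything to one pointwise taming estimate are all correct and are exactly what the references do.

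The genuine soft spot is the key estimate itself. The inequality $Q_\rho\ge-\tfrac{m}{\omega_0([C])}\,Q_{\omega_0}$ with the \emph{exact} constant is, by your own normalization, saturated along $TC$ at points of $C$, and a semi-definite inequality is not stable under perturbation. Off the zero section your two normalizations (that $J$ equals the model structure and that $\omega_0$ is in fibred symplectic normal form) cannot in general be arranged in the same chart, and in any case each holds only up to $O(r)$ errors; these errors contribute terms of size $O(r)|v^{h}|^2$ in purely horizontal directions $v^h$, precisely where your form has \emph{no} positive term while $\chi\approx1$. Steepening the cut-off only enlarges the vertical term $-\tfrac{1}{2\pi}\chi'\,dr\wedge\eta$, and shrinking $N$ creates no horizontal positivity, so \enquote{stays nonnegative on all of $N$} is not justified as stated --- with the exact constant it is in general false. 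The standard repair, and what the cited proofs actually establish, is the estimate with an $\epsilon$-slack: for every $\epsilon>0$ one can choose $N$ and $\chi$ so that $Q_\rho\ge-\bigl(\tfrac{m}{\omega_0([C])}+\epsilon\bigr)Q_{\omega_0}$, whence $\omega_0+s\rho$ tames $J$ for all $s<\bigl(\tfrac{m}{\omega_0([C])}+\epsilon\bigr)^{-1}$. Since every $s$ in the asserted range is \emph{strictly} below $\omega_0([C])/m$, choosing $\epsilon=\epsilon(s)$ small enough (so that $\rho=\rho_s$ depends on $s$, which is harmless as its cohomology class is always $\pd([C])$) recovers the full open interval. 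With this modification inserted at the one critical step, the rest of your argument goes through.
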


\section{Elliptic geometrically ruled surfaces}

\subsection{General remarks.} A complex surface $X$ is called ruled if there exists a holomorphic map $\pi:X\to Y$ to a Riemann surface $Y$ such that each fiber $\pi^{-1}(y)$ is a rational curve; if, in addition, each fiber is irreducible, then $X$ is called geometrically ruled.
A ruled surface is obtained by blowing up a geometrically ruled surface. Note however that a geometrically ruled surface need not be minimal (the blow up of $\mathbb{CP}^2$, denoted by $\mathbb{CP}^2\, \# \, \overline{\mathbb{CP}^2}$, 
is the unique example of a geometrically ruled surface that is not a minimal one). Unless otherwise noted, all ruled surfaces are assumed to be geometrically ruled. 
One can speak of the genus of the ruled surface $X$, meaning thereby the genus of $Y$. We thus have rational ruled surfaces, elliptic ruled surfaces and so on.

Up to diffeomorphism, there are two total spaces of orientable $S^2$-bundles over a Riemann surface: the product $S^2 \times Y$ and the non-trivial bundle $S^2 \tilde{\times} Y$. The product bundle admits sections $Y_{2\,k}$ of even self-intersection number $[Y_{2\,k}]^2 = 2\,k$, and the non-trivial bundle admits sections $Y_{2\,k+1}$ of odd self-intersection number $[Y_{2\,k+1}]^2 = 2\,k+1$. We will choose the basis $\bfy = [Y_0], \bfs = [\text{pt} \times S^2]$ for $\sfh_2(S^2 \times Y; \zz)$, and use the basis $\bfy_{-} = [Y_{-1}], \bfy_{+} = [Y_{1}]$ for $\sfh_2(S^2 \tilde{\times} Y; \zz)$. To simplify notations, we denote both the classes $\bfs$ and $\bfy_{+} - \bfy_{-}$, which are the fiber classes of the ruling, by $\bff$. Further, the class $\bfy_{+} + \bfy_{-}$, which is a class for a bisection of $X$, will be of particular interest for us, and will be widely used in forthcoming computations; we denote this class by $\bfb$.
Throughout this paper we will freely identify homology and cohomology by Poincaré duality.

Clearly, we have $[Y_{2\,k}] = \bfy + k\,\bff$ and $[Y_{2\,k + 1}] = \bfy_{+} + k\,(\bfy_{+} + \bfy_{-})$. This can be seen by evaluating the intersection forms for these 4-manifolds on the given basis:
$$
\displaystyle{
\mathcal{Q}_{S^2 \times Y} = 
\begin{pmatrix}
0 & 1\\ 
1 & 0
\end{pmatrix},\quad 
\mathcal{Q}_{S^2 \tilde{\times} Y} = 
\begin{pmatrix}
1 & 0\\ 
0 & -1
\end{pmatrix}.
}
$$
Observe that these forms are non-isomorphic. That is why the manifolds $S^2 \times Y$ and $S^2 \tilde{\times} Y$ are non-diffeomorphic. One more way to express the difference between them is to note that the product $S^2 \times Y$ is a spin 4-manifold, but $S^2 \tilde{\times} Y$ is not spin. 
Note that after blowing up one point, they become diffeomorphic:
$S^2 \times Y\,  \# \, \overline{\mathbb{CP}^2} \simeq S^2 \tilde{\times} Y\, \# \, \overline{\mathbb{CP}^2}$.

This section is mainly about the non-spin elliptic ruled surface $S^2 \tilde{\times} T^2$. When studying this manifold we sometimes use the notations $\bft_{+}$ and $\bft_{-}$ instead of $\bfy_{+}$ and $\bfy_{-}$ for the standard homology basis in $\sfh^2(S^2 \tilde{\times} T^2;\zz)$.

From the viewpoint of complex geometry every such $X$ is a holomorphic $\cp^1$-bundle over a Riemann surface $Y$ whose structure group is $\pgl(2,\cc)$. Biholomorphic classification of ruled surfaces is well understood, at least for low values of the genus. Below we
recall a part of the classification of elliptic ruled surfaces given by Atiyah in \cite{At-2}; this being the first step towards understanding the almost-complex geometry of these surfaces. We also provide a short summary of Suwa's results: \sli an explicit construction of a complex analytic family of ruled surfaces, where one can see the jump phenomenon of complex structures,
see \refsubsection{sec-family}, \slii an examination of those complex surfaces which are both ruled and admit an elliptic pencil, see \slsf{Theorem \ref{thm-fibering}}.

In what follows we will use a formula for the first Chern class of 
a geometrically ruled surface. In terms of $\bfy, \bfs, \bfy_{\pm}$, it becomes
\begin{equation}\label{eq-chern}
c_1(S^2 \times Y) = 2\,\bfy + \chi(Y)\,\bfs,\quad 
c_1(S^2 \tilde{\times} Y) = (1+\chi(Y))\,\bfy_{+} + (1-\chi(Y))\,\bfy_{-}.
\end{equation}
The symplectic geometry of ruled surfaces has been extensively studied by many authors \cite{Li-Li, Li-Liu-1, Li-Liu-2, AGK, Sh-4, H-Iv}. Ruled surfaces are of great interest from the symplectic point of view mainly because of the following significant result due to Lalonde-McDuff, see \cite{La-McD, McD-6}.
\begin{thm}[The classification of ruled 4-manifolds]\label{thm-mcduff}
Let $X$ be oriented diffeomorphic to a minimal rational or ruled surface, and let $\xi \in \sfh^2(X)$. Then there is a symplectic form (even a Kähler one) on $X$ in the class $\xi$ iff $\xi^2 > 0$. Moreover, any two symplectic forms in the class $\xi$ are diffeomorphic.
\end{thm}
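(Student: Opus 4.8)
The plan is to separate the two assertions — existence and uniqueness up to diffeomorphism — and to begin by recording that the oriented diffeomorphism types occurring here are exactly $\cp^2$, the product bundle $S^2\times Y$, and the non-trivial bundle $S^2\tilde\times Y$. The necessity of $\xi^2>0$ is immediate: if $\omega$ is symplectic and represents $\xi$, then $\xi^2=\int_X\omega\wedge\omega>0$ since $\omega^2$ is a positive volume form for the fixed orientation. I would note at this point that $\{\xi:\xi^2>0\}$ is a disjoint union of two cones interchanged by $\xi\mapsto-\xi$, and only the cone on which $\xi$ pairs positively with the fiber class $\bff$ can contain a class compatible with the given orientation; after replacing $\xi$ by $-\xi$ if necessary I assume we sit in this forward cone.

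For sufficiency (realizing even a K\"ahler form) I would use the complex-geometric models. Each listed manifold is a projectivization $\pp(E)$ of a rank-two holomorphic bundle $E$ over $Y$ (with $Y=\cp^1$ in the rational case) and so carries K\"ahler forms. Taking a K\"ahler form $\omega_Y$ on the base and a fiberwise Fubini--Study form $\sigma$ coupled to a connection, the two-parameter family $a\,\pi^{*}\omega_Y+b\,\sigma$ with $a,b>0$ sweeps out a two-dimensional cone of K\"ahler classes. Since $\sfh^2(X;\rr)$ has rank two and these classes are independent with the correct signs, a direct computation in the basis $\bfy,\bff$ (respectively $\bfy_{+},\bfy_{-}$) shows that the family exhausts the whole forward positive cone; for $\cp^2$ the scaled Fubini--Study class with $a>0$ suffices. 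Classes one cannot reach directly can alternatively be produced from a given K\"ahler form by inflation (\refthm{thm-Infla}) along a holomorphic section and fiber. This settles existence.

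The substance of the theorem is the uniqueness clause, and this is where Gromov's theory enters. Given any symplectic $\omega$ in class $\xi$, I would pick an $\omega$-tamed $J$ and study the moduli space of $J$-holomorphic spheres in the fiber class $\bff$, where $\bff^2=0$, $c_1\cdot\bff=2$, and $\langle\xi,\bff\rangle>0$. By Gromov compactness and positivity of intersections this moduli space is non-empty, and automatic transversality for embedded spheres of square zero makes the curves regular and pairwise disjoint, so that exactly one passes through each point. They therefore foliate $X$ and realize $\pi_J\colon X\to Y$ as a symplectic $S^2$-fibration. Two symplectic $S^2$-fibrations over $Y$ with the same cohomology class are then matched by a Moser isotopy performed fiberwise and simultaneously over the base — the fiber areas agree because they equal $\langle\xi,\bff\rangle$, and the induced base forms are cohomologous — and comparing an arbitrary $(X,\omega)$ with the standard K\"ahler model in the same class gives the claim. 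The organizing principle behind uniqueness is the connectedness of the space $\calj(X,\Omega)$ of tamed almost-complex structures together with McDuff's structure theory of rational and ruled symplectic $4$-manifolds \cite{McD-6, La-McD}.

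The main obstacle is precisely the construction of the $J$-holomorphic ruling: one must verify that the moduli space of fiber curves is compact and unobstructed and that no limit curve breaks into a configuration failing to foliate $X$. Here minimality is essential, since it excludes symplectic $(-1)$-spheres and hence the sphere-bubbling that would otherwise destroy the fibration, and one leans on the positivity of intersections special to dimension four. The analytic backbone — transversality, compactness, and the resulting fibration, as developed in \cite{Gro, McD-Sa-3} — is the technical heart of the argument, whereas the subsequent comparison of fibrations is a comparatively routine Moser-type deformation.
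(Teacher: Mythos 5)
The first thing to note is that the paper does not prove this theorem at all: it is quoted from Lalonde--McDuff \cite{La-McD} and McDuff \cite{McD-6}, so your proposal can only be measured against the argument in those sources. Its overall outline --- necessity of $\xi^2>0$ from $\int_X\omega\wedge\omega>0$, existence via the K\"ahler cone of the projectivized models plus inflation, uniqueness via a $J$-holomorphic ruling --- does reproduce the published strategy correctly.

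However, there is a genuine gap at exactly the point you dismiss as ``a comparatively routine Moser-type deformation''. Two cohomologous symplectic forms $\omega_0,\omega_1$ on $X$ that are both nondegenerate on the fibers of the same $S^2$-fibration are not obviously isotopic: the linear path $t\omega_1+(1-t)\omega_0$ stays nondegenerate on fibers but can degenerate as a form on $X$, and there is no canonical ``induced base form'' to compare, since a symplectic form on the total space does not descend to $Y$. What one can do cheaply is add $s\,\pi^*\sigma$ for $s$ large to make the whole path symplectic, but this changes the cohomology class, so it yields only a deformation equivalence; converting that deformation into an isotopy (or diffeomorphism) within the fixed class $\xi$ is the actual content of \cite{McD-6} and of the uniqueness half of \cite{La-McD}, and it is carried out by inflation (\refthm{thm-Infla}) along the $J$-holomorphic fibers and sections whose existence you established. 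In other words, inflation is indispensable for the uniqueness clause, not, as in your write-up, an optional device for existence. Two smaller omissions: $\cp^2$ carries no fiber class $\bff$, so your ruling argument says nothing there and one must instead invoke Gromov's theorem on the class of the line; and in the rational ruled case one must run the argument with the ruling class of smaller symplectic area, since for a non-generic tamed $J$ on $S^2\times S^2$ curves in the larger of the two ruling classes can degenerate, while indecomposability of the smaller one follows from area considerations.
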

Thus all symplectic properties of ruled surfaces depend only on the cohomology class of a symplectic form. 

Our main interest is to study symplectic $(-1)$-tori in $X$ and the corresponding elliptic twists. It is easy to prove that, except for $S^2 \tilde{\times} T^2$, there are no symplectic $(-1)$-tori in ruled surfaces. 
For a suitable symplectic form
the homology class $\bft_{-} \in \sfh_2(S^2 \tilde{\times} T^2; \zz)$ can be represented by a symplectic $(-1)$-torus, but none of the other classes of $\sfh_2(S^2 \tilde{\times} T^2; \zz)$ can.  

Let $(X, \omega_{\mu})$ be a symplectic ruled 4-manifold $(S^2 \tilde{\times} T^2, \omega_{\mu})$, where $\omega_{\mu}$ is a symplectic structure of the cohomology class
$
[\omega_{\mu}] = \bft_{+} - \mu \bft_{-}
$, $\mu \in (-1,1)$.
By \slsf{Theorem \ref{thm-mcduff}} $(X,\omega_{\mu})$ is well-defined up to symplectomorphism. As promised in the introduction, we will prove that $\pi_0 \symp^*(X,\omega_{\mu})$ is trivial. 
Here and in \refsubsection{sec-vanish} we abbreviate $\Omega(X,\o_{\mu})$ to $\Omega_{\mu}$.

Given $\mu > 0$, the elliptic divisorial locus is contained in $\calj(X,\Omega_{\mu})$. 
Thus, each loop linked to the locus is contractible in $\calj(X,\Omega_{\mu})$. 
As such, we do not expect any non-trivial elliptic twists in this case.
Following McDuff \cite{McD-B}, we will show that the group $\symp^*(X,\omega_{\mu})$ coincides with a group of certain diffeomorphisms, see \slsf{Lemma \ref{lem-epiclemma}}; the latter group can be proved to be connected by standard topological techniques, see \slsf{Proposition \ref{prop-nonspin}}.

When $\mu \leq 0$, the elliptic divisorial locus $\cald_{\bft_{-}}$ is no longer included in 
$\calj(X,\Omega_{\mu})$. The geometry of this divisorial locus is studied below in
\refsubsection{sec-vanish}, and particularly it is proved that: 
\sli Assumption $\sf{(A)}$ is satisfied for each loop linked to $\cald_{\bft_{-}}$; 
hence, $(X,\omega_{\mu})$ admits certain elliptic twists, see \lemma{lem-suka}. 
\slii The symplectic mapping class group $\symp^*(X,\omega_{\mu})$ is generated by elliptic twists
coming from $\cald_{\bft_{-}}$.
\sliii Each of them is symplectically isotopic to the identity, see \lemma{lem-unexpectedlemma}.

\subsection{Classification of complex surfaces ruled over elliptic curves.}\label{sec-complex}
Here we very briefly describe possible complex structures on elliptic ruled surfaces and study some of their properties.

Let $X$ be diffeomorphic to either $S^2 \times Y^2$ or $S^2 \tilde{\times} Y^2$. The Enriques-Kodaira classification of complex surfaces (see e.g.\cite{BHPV}) 
ensures the following:
\begin{enumerate}
\item\label{enum-alg} Every complex surface $X$ of this diffeomorphism type is algebraic and hence K\"ahler.
\item\label{enum-rul} Every such complex surface $X$ is ruled, i.e. there exists a holomorphic map $\pi : X\to Y$ such that $Y$ is a complex curve, and each fiber $\pi\inv(y)$ is an irreducible rational curve. Note that, with the single exception of $\mathbb{CP}^1 \times \mathbb{CP}^1$, a ruled surface admits at most one ruling.
\end{enumerate}

It was shown by Atiyah \cite{At-2} that every holomorphic $\cp^1$-bundle over a curve $Y$ with structure group the projective group $\pgl(2,\cc)$ admits a holomorphic section, and hence the structure group of such bundle can be reduced to the affine group $\Aff(1,\cc) \subset \pgl(2,\cc)$.

All of what was said so far applied for any ruled surface, irrespective of genus. Keep in mind, however, that everything below is for genus one surfaces. It was Atiyah who gave a classification of ruled surfaces with base an elliptic curve. The description presented here is taken from \cite{Sw}.
\begin{thm}[Atiyah]
Every holomorphic $\cp^1$-bundle with structure group $\pgl(2,\cc)$ over an elliptic curve is isomorphic to preciesly one of the following:
\begin{enumerate}
\item[\sli] a bundle associated to a principal $\cc^{*}$-bundle of nonpositive degree,
\item[\slii] a bundle $\mathcal{A}$, defined below, having structure group $\Aff(1,\cc)$, and
\item[\sliii] a bundle $\mathcal{A}^{Spin}$, having structure group $\Aff(1,\cc)$.
\end{enumerate}
\end{thm}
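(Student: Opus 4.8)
The plan is to reduce the classification of $\pgl(2,\cc)$-bundles over the elliptic curve $Y$ to the classification of rank-two holomorphic vector bundles on $Y$. Since the Brauer group of a curve is trivial --- the obstruction to lifting a $\pgl(2,\cc)$-bundle to a $\Gl(2,\cc)$-bundle lies in $H^2(Y,\calo^*)$, which vanishes by the exponential sequence because $H^2(Y,\calo)=0$ and $H^3(Y,\zz)=0$ --- every holomorphic $\cp^1$-bundle is the projectivization $\pp(E)$ of some rank-two bundle $E$, and $\pp(E)\cong\pp(E')$ precisely when $E'\cong E\otimes L$ for a line bundle $L$. Thus the problem becomes: classify rank-two bundles on $Y$ modulo tensoring by line bundles. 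The parity of $\deg E$ is invariant under such twists (it changes by $2\deg L$), and I would record it as the coarse invariant separating the spin total space $S^2\times T^2$ from the non-spin $\sts$; concretely it equals the parity of the self-intersection of any holomorphic section.

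Next I would invoke the section result already quoted from Atiyah: every $\cp^1$-bundle over $Y$ admits a holomorphic section, so $E$ carries a sub-line-bundle and sits in an extension $0\to L_1\to E\to L_2\to 0$, which reduces the structure group to $\Aff(1,\cc)$. This splits the analysis into two cases. If the extension splits, $E$ is decomposable; twisting by $L_1^{-1}$ gives $E\cong\calo\oplus M$ with $M=L_2\otimes L_1^{-1}$, the structure group reduces further to the diagonal $\cc^*$, and since $\pp(\calo\oplus M)\cong\pp(\calo\oplus M^{-1})$ one can normalize $\deg M\le 0$. This produces case \sli, the bundle associated to the principal $\cc^*$-bundle whose associated line bundle $M$ has nonpositive degree.

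The indecomposable case is where Atiyah's classification of vector bundles on an elliptic curve enters, and I expect this to be the main obstacle. The crucial input is the special cohomology of $Y$: since the canonical bundle is trivial and $h^1(\calo_Y)=1$, Serre duality gives $\dim\ext^1(\calo,\calo)=1$, so there is a unique nonsplit self-extension $0\to\calo\to F_2\to\calo\to 0$ up to isomorphism; its projectivization is the degree-zero indecomposable bundle. For the remaining parity one normalizes to degree one, where a rank-two indecomposable bundle is automatically stable, and on an elliptic curve the moduli of stable rank-two bundles of fixed determinant is a single point, so up to twist there is again a unique such bundle. These give the two indecomposable types, which I would identify with $\mathcal{A}$ and $\mathcal{A}^{Spin}$. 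The only substantive work is verifying that every indecomposable rank-two bundle is, up to twist, one of these two --- precisely Atiyah's theorem, resting on the endomorphism and extension computations peculiar to genus one.

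Finally, for the ``precisely one'' clause I would separate the three families by invariants. Decomposability is detected by the existence of two disjoint holomorphic sections, which distinguishes case \sli from the indecomposable cases. Within case \sli, the integer $\deg M\le 0$ is recovered as the minimal self-intersection among sections, so distinct $M$ give non-isomorphic bundles. The two indecomposable bundles are separated by the parity of $\deg E$, equivalently by whether the total space is the spin $S^2\times T^2$ or the non-spin $\sts$, which are not even diffeomorphic. Assembling existence (every bundle lands in one family) with this distinctness yields the stated trichotomy.
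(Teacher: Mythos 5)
The paper offers no proof of this statement at all: it is quoted as a known theorem of Atiyah \cite{At-2}, in the formulation the authors take from Suwa \cite{Sw}, and the discussion that follows merely builds explicit models of the three classes by transition functions (the surfaces $X_k$ and $X_{\cala}$), which is what the later sections actually use. So your proposal cannot coincide with the paper's route; what you supply instead is the standard vector-bundle proof, and its skeleton is sound: vanishing of $H^2(Y,\calo^*)$ (exponential sequence plus $\dim_{\cc} Y=1$) lets you write every $\cp^1$-bundle as $\pp(E)$ with $\pp(E)\cong\pp(E')$ iff $E'\cong E\otimes L$; the parity of $\deg E$ is the residual invariant and matches spin versus non-spin; Atiyah's section theorem (quoted separately in the paper, so fair to use) produces a sub-line-bundle, splitting the analysis into the decomposable case, normalized to $\calo\oplus M$ with $\deg M\le 0$ via $\pp(\calo\oplus M)\cong\pp(\calo\oplus M^{-1})$ (case (i)), and the indecomposable case, where $h^1(\calo_Y)=1$ gives the unique nonsplit self-extension $F_2$ (even degree, hence $\cala^{Spin}$), while in odd degree your stability argument plus the one-point moduli of fixed-determinant stable bundles, together with divisibility of $\pic^0$, gives uniqueness up to twist (hence $\cala$). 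Two remarks. First, as you yourself flag, the one substantive step --- every indecomposable rank-two bundle is, up to twist, $F_2$ or the degree-one stable bundle --- is invoked rather than proved; to close it you would need that an indecomposable bundle normalized to degree zero admits a sub-line-bundle of degree exactly zero (a Nagata-type bound $d_{\max}\ge(\deg E-g)/2$ gives $d_{\max}\ge 0$, and your Serre-duality splitting argument gives $d_{\max}\le 0$; then $\ext^1(\lambda',\lambda)\ne 0$ with both of degree zero forces $\lambda'\cong\lambda$, so $E\cong F_2\otimes\lambda$). Since the statement is itself a citation, deferring this to \cite{At-2} is legitimate, but it makes your argument a reduction to Atiyah's vector-bundle classification rather than an independent proof --- which is, one level up, exactly what the paper does. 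Second, a small inaccuracy in your distinctness paragraph: within case (i) at degree zero, $\deg M$ does not separate bundles, since $\pp(\calo\oplus M)$, $M\in\pic^0$, is a continuous family with $\pp(\calo\oplus M)\cong\pp(\calo\oplus M')$ iff $M'\cong M^{\pm 1}$; this is harmless here because \enquote{precisely one} in the statement refers to membership in the three classes, not to individual members of class (i). Finally, note what each approach buys: your abstract $\pp(E)$ argument yields the trichotomy cleanly, whereas the paper's explicit cocycle and quotient models are what furnish the concrete geometry exploited downstream --- the elliptic fibration of $X_{\cala}$, the torus action, and the jumping family of \refsubsection{sec-family}.
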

We shall proceed with a little discussion of these bundles:

\medskip%
\sli We first describe those $\pgl(2,\cc)$-bundles whose structure group reduces to $\cc^{*}$.
Let $y \in Y$ be a point on the curve $Y$, and let $\{V_0, V_1\}$ be an open cover of $Y$ such that $V_0 = Y \setminus \{y\}$ and $V_1$ is a small neighbourhood of $y$, so the domain $V_0 \cap V_1 =: \hat{V}$ is a punctured disc. We choose a multivalued coordinate $u$ on $Y$ centered at $y$.

A surface $X_{k}$ associated to the line bundle $\scro(k\,y)$ (or if desired, a $\cc^{*}$-bundle) can be described as follows:
\[
X_k := \left( V_0 \times \cp^1 \right) \cup \left( V_1 \times \cp^1 \right)/\sim,
\]
where $(u,z_0) \in V_0 \times \cp^1$ and $(u,z_1) \in V_1 \times \cp^1$ are identified iff $u \in \hat{V},\ z_1 = z_0 u^k$. Here $z_0, z_1$ are inhomogeneous coordinates on the two copies of $\cp^1$.
Clearly, the biholomorphism $
(u,z_0) \to (u,z_0^{-1}),\,(u,z_1) \to (u,z_{1}^{-1})
$
maps $X_k$ to $X_{-k}$. Thus it is sufficient to consider only values of $k$ that are nonpositive.

There is a natural $\cc^{*}$-action on $X_k$ via $g \cdot (z_0,u) := (gz_0,u),\, g \cdot (z_1,u) := (gz_1,u)$
for each $g \in \cc^{*}$. The fixed point set of this action consists of two mutually disjoint sections $Y_{k}$ and $Y_{-k}$ defined respectively by $z_0 = z_1 = 0$ and $z_0 = z_1 = \infty$. We have $[Y_{k}]^2 = k$ and $[Y_{-k}]^2 = -k$.

It is very well known that any line bundle $L$ of degree $\deg(L) = k \neq 0$ is isomorphic to $\scro(k\,y)$ for some $y \in Y$. Thus all the ruled surfaces associated with line bundles of non-zero degree $k$ are biholomorphic to one and the same surface $X_k$.
On the other hand, the parity of the degree of the underlying line bundle is a topological invariant of a ruled surface. More precisely, a ruled surface $X$ associated with a line bundle $L$ is diffeomorphic to $Y \times S^2$ for $\deg(L)$ even, and to $Y \tilde{\times} S^2$ for $\deg(L)$ odd.

\medskip%
\slii Again, we start with an explicit description of the ruled surface $X_{\mathcal{A}}$ associated with the affine bundle $\mathcal{A}$. Let $\{V_0, V_1, \hat{V}\}$ be the open cover of $Y$ as before, $u$ be a coordinate on $Y$ centered at $y$, and $z_0,z_1$ be fiber coordinates. Define
\[
X_{\mathcal{A}} := \left( V_0 \times \cp^1 \right) \cup \left( V_1 \times \cp^1 \right)/\sim,
\]
where $(z_0,u) \sim (z_0,u)$ for $u \in \hat{V}$ and $z_0 = z_1 u + u^{-1}$.

There is an obvious section $Y_1$ defined by the equation $z_0 = z_1 = \infty$, but in contrast to $\cc^{*}$-bundles, the surface $X_{\mathcal{A}}$ contains no section disjoint from that one.
This can be shown by means of direct computation in local coordinates, but one easily deduce this from \slsf{Theorem} \ref{thm-fibering} below.

We will make repeated use of the following geometric characterization of $X_{\mathcal{A}}$, whose proof is given in \cite{Sw}, see \slsf{Theorem 5}.
\begin{thm}\label{thm-fibering}
The surface $X_{\mathcal{A}}$ associated with the affine bundle $\mathcal{A}$
admits an elliptic fibration over $\cp^1$; the general fiber is a smooth elliptic curve in the class $\displaystyle{ 2\,\bft_{+} + 2\,\bft_{-} }$ and there are three multiple fibers each having the property that the underlying reduced subvariety is a smooth elliptic curve in the class $\displaystyle{ \bft_{+} + \bft_{-} }$. There are no other singular fibers.
\end{thm}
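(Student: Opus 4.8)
The plan is to let the two fibration structures on $X_{\mathcal{A}}$ interact while the numerical invariants carry most of the load. First I would record what the diffeomorphism type forces. Since the underlying smooth manifold of $X_{\mathcal{A}}$ is $S^2 \tilde{\times} T^2$, the Chern class formula \eqref{eq-chern} (with $\chi(T^2) = 0$) gives $c_1(X_{\mathcal{A}}) = \bft_{+} + \bft_{-}$, hence $K_{X_{\mathcal{A}}} = -(\bft_{+} + \bft_{-}) = -\bfb$ and $K_{X_{\mathcal{A}}}^2 = 0$. The topological Euler number is $e(X_{\mathcal{A}}) = \chi(S^2)\,\chi(T^2) = 0$, so Noether's formula yields $\chi(\scro_{X_{\mathcal{A}}}) = \tfrac{1}{12}\big(K_{X_{\mathcal{A}}}^2 + e(X_{\mathcal{A}})\big) = 0$. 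These three facts are the only invariants I will use.

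Next I would produce the elliptic fibration $f \colon X_{\mathcal{A}} \to \cp^1$, whose fibres are multisections of the ruling $\pi \colon X_{\mathcal{A}} \to T^2$ (a reduced fibre in class $\bft_{+} + \bft_{-}$ meets the ruling fibre $\bff = \bft_{+} - \bft_{-}$ in two points, so by Riemann--Hurwitz it is an unramified double cover of $T^2$). There are two natural routes. The explicit one is Suwa's: in the affine fibre coordinate $z$, with the section at infinity $Y_1$ in class $\bft_{+}$, one writes the pencil of double sections cut out by quadratic expressions in $z$ whose coefficients are theta functions on $T^2$, and checks that its base locus is empty. The conceptual one, which also controls the degenerations, is to realise $X_{\mathcal{A}}$ as the isotrivial elliptic surface whose monodromy about three special points $p_1,p_2,p_3 \in \cp^1$ is translation of a fixed fibre $F_0$ by the three nonzero $2$-torsion points $\eps_1,\eps_2,\eps_3$ of $F_0[2] \cong (\zz/2)^2$; these satisfy $\eps_1 + \eps_2 + \eps_3 = 0$, which is exactly the product-one relation in $\pi_1(\cp^1 \setminus \{p_1,p_2,p_3\})$, and each order-two translation produces one multiple fibre of multiplicity $2$. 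Matching invariants against Atiyah's classification then identifies this surface with $X_{\mathcal{A}}$.

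With $f$ in hand the remaining assertions follow formally. A general fibre $F$ is a smooth genus-one curve, so adjunction gives $F^2 = 0$ and $F \cdot K_{X_{\mathcal{A}}} = 0$; with $K_{X_{\mathcal{A}}} = -\bfb$ and $\bft_{+}^2 = 1$, $\bft_{-}^2 = -1$, $\bft_{+}\cdot\bft_{-} = 0$ this forces $F = a\,\bfb$ for some integer $a \ge 1$. The fibration is relatively minimal: a $(-1)$-curve $R$ inside a fibre would satisfy $R \cdot F = 0$, hence $R \cdot \bfb = 0$, whereas adjunction for a $(-1)$-curve together with $K_{X_{\mathcal{A}}} = -\bfb$ gives $R \cdot \bfb = 1$, a contradiction. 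Kodaira's formula $e(X_{\mathcal{A}}) = \sum_v e(F_v)$ therefore applies, and since $e(X_{\mathcal{A}}) = 0$ while $e(F_v) \ge 0$ with equality exactly for smooth or multiple-of-smooth fibres, every fibre is a smooth elliptic curve possibly carrying a multiplicity; in particular $f$ has no nodal or other singular fibres. A multiple fibre $m_i F_i$ has $[F_i] = \tfrac{a}{m_i}\,\bfb$, which, $\bfb$ being primitive, is integral only when $m_i \mid a$. Since the construction yields exactly three multiple fibres, all of multiplicity $2$, substituting $m_i = 2$ and their number into $K_{X_{\mathcal{A}}} = -2F + \sum_i (m_i - 1)F_i$ and comparing with $K_{X_{\mathcal{A}}} = -\bfb$ gives $a = 2$. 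Hence the general fibre lies in $2\,\bft_{+} + 2\,\bft_{-}$ and each reduced multiple fibre in $\bft_{+} + \bft_{-}$, as asserted.

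The main obstacle is the first half of the middle step. The Euler-number and canonical-bundle bookkeeping rigidly determine the numerical type once the fibration and the multiplicities of its multiple fibres are known, but they do not by themselves exclude, for instance, a pencil with general fibre $4\,\bft_{+} + 4\,\bft_{-}$ carrying two multiple fibres of multiplicity $2$ and one of multiplicity $4$ --- a configuration equally compatible with $K_{X_{\mathcal{A}}} = -\bfb$ and $\chi(\scro_{X_{\mathcal{A}}}) = 0$. Ruling this out, that is, proving that $X_{\mathcal{A}}$ genuinely carries an elliptic pencil whose multiple fibres are exactly the three of multiplicity $2$ coming from the nonzero $2$-torsion (each reduced fibre being one of the three connected unramified double covers of $T^2$), is where the geometric input --- Suwa's explicit pencil or the $2$-torsion monodromy identification --- is indispensable. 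I would regard producing and identifying that pencil as the crux of the argument.
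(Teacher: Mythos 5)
Your proposal is correct, and its core geometric input is in fact the same as the paper's: the paper does not prove \slsf{Theorem \ref{thm-fibering}} from scratch but cites Suwa (\slsf{Theorem 5} of \cite{Sw}), and then exhibits exactly the model your ``conceptual route'' describes, namely $X_{\cala} = (\cp^1 \times Y)/\calg$ with $\calg \cong \zz_2 \oplus \zz_2$ acting by $(z,u) \mapsto (-z, u+\tau_1/2)$ and $(z,u) \mapsto (1/z, u+\tau_2/2)$, the fibration being the $\calg$-invariant function $f(z,u) = \frac{1}{2}\left(z^2 + \frac{1}{z^2}\right)$ with critical values $\{1,-1,\infty\}$; your monodromy description (translations by the three nonzero $2$-torsion points, with $\eps_1+\eps_2+\eps_3=0$) is precisely this quotient, since the $(\zz/2)^2$-cover of $\cp^1$ branched at three points is again $\cp^1$. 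Where you genuinely differ is in how the fiber classes are pinned down: the paper simply reads off from the explicit model that general fibers lie in $2(\bft_{+}+\bft_{-})$ and the three multiple fibers reduce to curves in $\bft_{+}+\bft_{-}$, whereas you derive this from numerical bookkeeping --- Noether's formula giving $\chi(\scro)=0$, relative minimality via adjunction, the Zeuthen--Segre/Euler-number argument excluding non-multiple singular fibers, and the canonical bundle formula $K = -2F + \sum_i(m_i-1)F_i$ forcing $a=2$ once the three multiplicity-$2$ fibers are known. That bookkeeping buys a clean \emph{a posteriori} verification and an honest accounting of what the invariants alone cannot decide (your $4\bfb$ counterexample configuration is a correct observation), but, as you yourself flag, it cannot replace the explicit pencil; your Atiyah-matching step, left terse in the proposal, can be made concrete by noting that the moving fiber class $2\bfb$ rules out any section in class $\bft_{-}$ (since $\bft_{-}\cdot\bfb = -1$), which excludes all the surfaces $X_{1-2k}$ from Atiyah's list, while non-spinness excludes the degree-zero and $\cala^{Spin}$ cases.
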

The following corollary will be used later. The reader is invited to look at \cite{McD-D} for the definition of the Gromov invariants and some examples of their computation.
\begin{corol}\label{cor-gw}
$\gr(\bfy_{+} + \bfy_{-}) = 3$.
\end{corol}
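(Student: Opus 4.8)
The plan is to compute $\gr(\bfb)$, with $\bfb=\bfy_{+}+\bfy_{-}$, by identifying the three multiple fibers of \refthm{thm-fibering} as the entire, regular moduli space of embedded holomorphic curves in this class. First I would pin down the numerics. From the intersection form $\mathcal{Q}_{S^2\tilde{\times}T^2}=\spmatr{1 & 0 \\ 0 & -1}$ one reads off $\bfb^2=\bfy_{+}^2+\bfy_{-}^2=0$, and from \eqref{eq-chern} with $\chi(T^2)=0$ one gets $c_1(S^2\tilde{\times}T^2)=\bfy_{+}+\bfy_{-}=\bfb$, so that $c_1\cdot\bfb=\bfb^2=0$. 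Hence the Taubes dimension $d(\bfb)=\half(\bfb^2+c_1\cdot\bfb)=0$: the Gromov invariant imposes no point constraints and equals the signed count of embedded $J$-holomorphic curves in class $\bfb$ for generic tamed $J$. By adjunction $2g-2=\bfb^2-c_1\cdot\bfb=0$, so every such curve is a torus, consistent with \refthm{thm-fibering}.

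Since $\gr$ is independent of the generic tamed almost-complex structure, I would compute it with the integrable structure on $X_{\mathcal A}$ furnished by \refthm{thm-fibering} and show that structure is already regular. Let $\pi\colon X_{\mathcal A}\to\cp^1$ be the elliptic fibration. Any $J$-holomorphic curve $D$ in class $\bfb$ satisfies $D\cdot[\text{fiber}]=\bfb\cdot 2\bfb=0$; since a curve not contained in the fibers would meet them with positive multiplicity, positivity of intersections forces $D$ to be supported on fibers. As the general fiber lies in class $2\bfb$ and a reduced multiple fiber in class $\bfb$, the only effective combination summing to $\bfb$ is a single reduced double fiber. Thus the curves in class $\bfb$ are exactly the three smooth elliptic curves underlying the three multiple fibers, with no reducible or disconnected configurations (these being ruled out by the same intersection argument and by the primitivity of $\bfb$).

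The crux is regularity, and this is where the multiplicity enters decisively. For a multiple fiber $C$ of multiplicity $2$, the normal bundle $N=\scro_C(C)$ is a \emph{nontrivial} $2$-torsion element of $\pic^0(C)$ on the elliptic curve $C$; being a nontrivial degree-zero line bundle it has $h^0(N)=h^1(N)=0$. Hence each of the three curves is infinitesimally rigid and unobstructed, so it is an isolated, transversally cut out point of the moduli space, and the integrable $J$ computes $\gr(\bfb)$ faithfully. As all three are honest complex curves, they contribute with the same positive sign, giving $\gr(\bfb)=3$.

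The main obstacle to get right is precisely this normal-bundle computation: one must exploit that these fibers are \emph{multiple}, so that $N$ is torsion but nontrivial, rather than reduced fibers of multiplicity one whose trivial normal bundle would yield $h^0=h^1=1$ and a one-parameter family. It is the $2$-torsion of $N$ that collapses the expected deformation space to a point and simultaneously kills the obstruction, pinning the invariant at the naive count of three.
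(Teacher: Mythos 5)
Your proposal is correct and follows essentially the same route as the paper's proof: identify the three reduced multiple fibers of \refthm{thm-fibering} as the only representatives of $\bfb$, use holomorphic nontriviality of their normal bundles to get regularity and contribution $\pm1$, and invoke Taubes' sign convention for integrable structures to pin each sign at $+1$. The only difference is that you work out explicitly (dimension count, positivity of intersections with the fibration, the $2$-torsion normal bundle computation in Suwa's model) what the paper compresses into one line with citations to \cite{McD-D} and \cite{Tb} --- noting that for the vanishing $h^0(N)=h^1(N)=0$ only nontriviality of the degree-zero bundle $N$ is needed, not its exact torsion order.
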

\begin{proof}
There are no smooth curves in $X_{\cala}$, other than the multiple fibers, that are in the class $\bft_{+} + \bft_{-}$. Each multiple fiber contributes $\pm1$ to $\gr(\bft_{+} + \bft_{-})$, for their normal bundles are holomorphically non-trivial, see \slsf{§ 1.7} in \cite{McD-D}. If the complex structure is integrable, then each {\itshape non-multiple-covered} torus should appear with sign $(+1)$, see \cite{Tb}. \qed  
\end{proof}

\medskip%
Based on this theorem, Suwa then gives another construction of $X_{\mathcal{A}}$. We mention this construction here because it appears to have interest for the sequel.

Let $Y \cong \cc/\zz \tau_1 \oplus \zz \tau_2$ be an elliptic curve, 
and $u$ be a multivalued coordinate on $Y$.
Define $X_{\cala}$ to be a quotient space of $\cp^1 \times Y/\calg$, where $\mathcal{G} \cong \zz_2 \oplus \zz_2$ is generated by the following involutions
\[
\displaystyle{
(z, u) \to \left(-z, u + \frac{\tau_1}{2} \right),\quad (z, u) \to \left( \frac{1}{z}, u + \frac{\tau_2}{2} \right).
}
\]
The surface obtained is elliptic ruled and is non-spin; see \cite{Sw}, where the latter is 
proved by constructing a section for $X_{\mathcal{A}}$ of odd self-intersection number, 
see also \slsf{Exercises 6.13 and 6.14} in \cite{McD-Sa-1}.

The elliptic fibration of $X_{\cala}$ mentioned in \refthm{thm-fibering} comes from the 
$\calg$-invariant function
\[
f(z,u)=\frac{1}{2}\left( z^2 + \frac{1}{z^2} \right),
\]
whose values are regular for all but three points of $\cp^1$. For a regular point, when $z \neq \left\{ -1,1,\infty \right\}$, the fiber $f^{-1}(z)$ is an elliptic curve in the class $2 (\bft_{+} + \bft_{-})$, whereas each of the three singular fibers is a curve in the class $\bft_{+} + \bft_{-}$.

There is an obvious action of the complex torus $\mathcal{T} \cong Y$ on $\cp^1 \times Y$ by translations. This action commutes with that of $\calg$. Hence, $\mathcal{T}$ acts also on $X_{\cala}$. As the function $f$ is $\mathcal{T}$-invariant, so are the fibers $f^{-1}(z)$, $z \in \cp^1$ of our elliptic fibration; they are, in fact, simply the orbits of the action. 
Although $\mathcal{T}$ acts effectively on $X_{\cala}$, it does not act freely; the isotropy groups of the singular fibers correspond to the three pairwise different order two subgroups of $\mathcal{T}$. For instance, for $(z,u) \in f^{-1}(\infty)$, the stabilizer is $z \to -z$. 

As each fiber $f^{-1}(z)$ is the torus, it gives a homomorphism $\sfh_1(f^{-1}(z);\zz) \to \sfh_1(X_{\cala};\zz)$ between the two copies of $\zz^2$. 
To see what this homomorphism is for the multiple fibers, we regard $X_{\cala}$ as a ruled surface 
over $Y^{\prime} \cong \cc/\zz (\tau_{1}/2) \oplus \zz (\tau_2/2)$. Then the multiple fibers 
appear as bisections, double covering of $Y^{\prime}$. Note that there is a one-to-one correspondence 
between the double covering of $Y^{\prime}$ and the index $2$ subgroups of $\sfh_1(Y^{\prime};\zz)$.
This implies that, for the singular fibers $f^{-1}(z), z = \left\{ -1,1,\infty \right\}$, the images of 
$\sfh_1(f^{-1}(z);\zz) \to \sfh_1(X_{\cala};\zz)$ correspond to three pairwise different index $2$ 
subgroups of $\sfh_1(X_{\cala};\zz) \cong \sfh_1(Y^{\prime};\zz)$.
\medskip%

\sliii The ruled surface associated to $\mathcal{A}^{Spin}$ is diffeomorphic to $S^2 \times T^2$, thus it will not be discussed in this note, but see \cite{Sw}.

\medskip%
Summarizing our above observations, we see that $X \cong S^2 \tilde{\times} T^2$ admits countably many diffeomorphism classes of complex structures. These structures are as follows:
\begin{itemize}
\item The structures $J \in \scrj_{1-2\,k},\,k > 0$, such that the ruled surface $(X,J)$ contains a section of self-intersection number $1-2\,k$; these are all biholomorphic to $X_{1-2\,k}$.

\item The type $\cala$ structures $J \in \scrj_{\mathcal{A}}$ such that 
the ruled surface $(X,J)$ contains no sections of negative self-intersection number but does contain a triple of smooth bisections; these are all biholomorphic to $X_{\mathcal{A}}$.
\end{itemize}

\subsection{One family of ruled surfaces over elliptic base.}\label{sec-family}
Here is a construction of a one-parametric complex-analytic family $p : \scrx \to \cc$ of non-spin elliptic ruled surfaces, such that the surfaces $p^{-1}(t),\,t \neq 0$, are biholomorphic to $X_{\mathcal{A}}$ and $p^{-1}(0) \cong X_{-1}$.

As before, we take a point $y$ on $Y$, let $u$ be a coordinate of the center $y$, and put $\{V_0, V_1, \hat{V}\}$ to be an open cover for $Y$ such that $V_0 := Y \setminus \{y\}$, $V_1$ is a small neighbourhood of $y$, and $\hat{V} := V_0 \cap V_1$. Further, let $\Delta$ be a complex plane, and let $t$ be a coordinate on it.

We construct the complex 3-manifold $\scrx$ by patching $\Delta \times V_0 \times \cp^1$ and $\Delta \times V_1 \times \cp^1$ in such a way that $(t,z_0,u) \sim (t,z_1,u)$ for $u \in \hat{V}$ and $z_0 = z_1u + tu^{-1}$.

The preimage of $0$ and $1$ under the natural projection $p : \scrx \to \Delta$ are biholomorphic respectively to $X_{-1}$ and $X_{\mathcal{A}}$. In fact, it is not hard to see that for each $t \neq 0$, the surface $p^{-1}(t)$ is biholomorphic to $X_{\mathcal{A}}$ as well.
One way to prove this is to use the $\cc^{*}$-action on $\scrx$
\[
g \cdot (t, z_0, u) := (tg, gz_0,u),\quad g \cdot (t,uz_1,u) := (tg, gz_1, u)\quad \text{\normalfont{for each $g \in \cc$}}.
\]
This proves even more than we desired, namely, that there exists a $\cc^{*}$-action on $\scrx$ such that for each $g \in \cc^{*}$ we get a commutative diagram
$$
\displaystyle{
\begin{CD}
\scrx @>{\cdot g}>> \scrx \\
@V{p}VV @VV{p}V \\
\cc @>>{\cdot g}> \cc,
\end{CD}
}
$$
where $\scrx \xrightarrow {\cdot g} \scrx$ denotes the biholomorphism induced by $g \in \cc^{*}$. 

The construction of the complex-analytic family $\scrx$ is due to Suwa, see \cite{Sw}, though the existence of the $\cc^{*}$-action was not mentioned in Suwa's paper. Let us summarize his result in a theorem.

\subsection{Embedded curves and almost-complex structures.}\label{sec-emb}

In \refsection{sec-complex} the classification for non-spin elliptic ruled surfaces was given. It turns out that this classification can be extended to the almost-complex geometry of $S^2 \tilde{\times} T^2$.

Let $X$ be diffeomorphic to $S^2 \tilde{\times} T^2$, and let $\calj(X)$ be the space of almost-complex structures on $X$ that are tamed by some symplectic form; the symplectic forms need not be the same.
Here we use the short notation $\calj$ for $\calj(X)$.

Given $k > 0$, let $\calj_{1-2k}(X)$ (we will abbreviate it to $\calj_{1-2k}$) be the subset of $J \in \calj$ consisting of elements that admit a smooth irreducible $J$-holomorphic elliptic curve in the class $\bft_{+}-k\,\bff$. It is well known that $\calj_{1-2k}$ forms a subvariety of $\calj$ of real codimension $2{\cdot}(2k-1)$, see e.g. \slsf{Corollary 8.2.3} in \cite{Iv-Sh-1}. 

Further, define $\calj_{\cala}(X)$ (or $\calj_{\cala}$, for short) be the subset $J \in \calj$ of those element for which there exists a smooth irreducible $J$-holomorphic elliptic curve in the class $\bfb$. 

By straightforward computations one can show that the sets $\calj_{1-2k}$ are mutually disjoint, and each $\calj_{1-2k}$ is disjoint from $\calj_{\cala}$. 
Further, it is not hard to see that $\calj_{-1} \subset \overline{\calj}_{\cala}$ and $\calj_{1-2(k+1)} \subset \overline{\calj}_{1-2k}$, where $\overline{\calj}_{1-2k}$ is for the closure of $\calj_{1-2k}$. A less trivial fact
is that
\begin{equation}\label{eq-decom}
\displaystyle{
\calj = \calj_{\cala} \bigsqcup \calj_{-1} \bigsqcup \calj_{-3} \bigsqcup \calj_{-5} \ldots,
}
\end{equation}
which can be also stated as follows.
\begin{prop}[cf. \slsf{Lemma 4.2} in \cite{McD-B}]\label{prop-smoothcurves}
Let $(X,\omega)$ be a symplectic ruled 4-manifold diffeomorphic to $S^2 \tilde{\times} T^2$.
Then every $\omega$-tamed almost-complex structure $J$ admits a smooth irreducible $J$-holomorphic representative in either $\bfb$ or $\bft_{+}-k\,\bff$ for some $k > 0$.
\end{prop}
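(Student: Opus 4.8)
The plan is to prove the equivalent statement \eqref{eq-decom}, namely that every tamed $J$ carries a smooth irreducible elliptic curve in the bisection class $\bfb = \bft_{+} + \bft_{-}$ or in one of the section classes $\bft_{+} - k\,\bff$ with $k \geq 1$. The whole argument rests on a dichotomy governed by the class $\bfb$, which is always \emph{occupied}: by \refcorol{cor-gw} we have $\gr(\bfb) = 3 \neq 0$, so by the deformation invariance of the Gromov invariant together with Gromov compactness, every $\omega$-tamed $J$ admits a $J$-holomorphic cusp-curve (stable map) in the class $\bfb$.

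\emph{Step 1 (the ruling).} First I would recall that $S^2 \tilde{\times} T^2$ is a minimal ruled surface, so by the structure theory underlying \refthm{thm-mcduff} (see \cite{La-McD, McD-6}) the fiber class $\bff$ is represented, for every tamed $J$, by a foliation of $X$ by embedded $J$-holomorphic spheres, with no reducible fibers (a broken fiber would contain an exceptional sphere, impossible by minimality). Positivity of intersections then gives $C \cdot \bff \geq 0$ for every irreducible $J$-holomorphic curve $C$, with equality precisely when $C$ is a fiber; any non-fiber curve is a multisection with $C \cdot \bff \geq 1$. I also record two consequences of Riemann--Hurwitz for the projection $\pi \colon X \to T^2$: a section ($C \cdot \bff = 1$) is automatically a smoothly embedded torus (degree one, hence unramified), and there is no irreducible rational curve that is a genuine bisection (a sphere cannot be a branched double cover of $T^2$). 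Since $\bfb$ and every $\bft_{+} - k\,\bff$ are primitive, curves in these classes are somewhere injective, and by adjunction (here $g_{\mathrm{adj}} = 1$ in both cases) an irreducible representative is an embedded torus once it has geometric genus $1$, which the Riemann--Hurwitz computation forces.

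\emph{Step 2 (the dichotomy).} If $\bfb$ has an irreducible $J$-holomorphic representative, then by Step 1 it is a smooth embedded torus and $J \in \calj_{\cala}$; this is the first alternative. Otherwise the stable map in class $\bfb$ produced above is reducible or multiply covered. Writing $\bfb = \sum_i \mu_i B_i$ over the distinct irreducible components $C_i$ (reduced classes $B_i$, multiplicities $\mu_i \geq 1$) and intersecting with $\bff$ gives $\sum_i \mu_i (B_i \cdot \bff) = 2$, where the fiber components drop out. The non-fiber part is therefore one of: a single bisection $B_1 = \bfb - c\,\bff$ with $c \geq 1$; a doubly covered section with $2 B_1 + c\,\bff = \bfb$; or two sections with $B_1 + B_2 + c\,\bff = \bfb$. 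The first possibility is ruled out by Step 1 (for $c = 1$ it would be a sphere bisection, and for $c \geq 2$ adjunction gives $g_{\mathrm{adj}} < 0$). In the remaining two cases I extract a negative section: writing a section class as $a\,\bft_{+} + (1-a)\,\bft_{-}$ with self-intersection $2a - 1$, comparison of $\bft_{+}$-coefficients gives $a_1 + a_2 = 1 - c \leq 1$ (respectively $2 a_1 = 1 - c$), so at least one component has $a \leq 0$ and hence self-intersection $\leq -1$; that component is a smooth embedded torus in the class $\bft_{+} - k\,\bff$ with $k = 1 - a \geq 1$, so $J \in \calj_{1-2k}$.

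I expect the genuine content to lie entirely in Step 1: establishing the $J$-holomorphic ruling for an arbitrary (non-generic) tamed $J$ and the attendant positivity and compactness statements, together with the passage from $\gr(\bfb) \neq 0$ to an actual $J$-holomorphic representative for every $J$. Once these analytic facts are in hand, Step 2 is a finite, purely combinatorial case check on homology classes, and the Riemann--Hurwitz observations take care of smoothness automatically. The disjointness of the pieces of the decomposition is, as already noted in the text, an immediate positivity-of-intersection computation and need not be revisited here.
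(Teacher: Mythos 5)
Your proposal is correct and follows essentially the same route as the paper's own proof: you obtain a (possibly degenerate) representative of $\bfb$ for every tamed $J$ from $\gr(\bfb)=3$ (\refcorol{cor-gw}) plus Gromov compactness, invoke the $J$-holomorphic ruling of \refthm{thm-ruled}, and then run a positivity-of-intersections/adjunction case analysis on how $\bfb$ decomposes against the fiber class $\bff$, extracting either a smooth bisection torus or a negative section in $\bft_{+}-k\,\bff$. If anything, your Step 2 is slightly more explicit than the paper's three cases, since you handle the multiply covered section and the bisection-plus-fibers configurations directly (via the Riemann--Hurwitz constraint, in place of the paper's remark that spherical classes are proportional to $\bff$), where the paper disposes of them with a terse appeal to ``similar arguments''.
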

\begin{proof}
The proof is analogous to that of \slsf{Lemma 4.2} in \cite{McD-B}. Observe that the expected codimension for the class $\bfb$ is zero. By \slsf{Lemma \ref{cor-gw}} we have $\gr(\bft_{+} + \bft_{-}) > 0$. Hence, $\calj_{\cala}$ is an open dense subset of $\calj$, and, thanks to the Gromov compactness theorem, for each $J \in \calj$ the class $\bfb$ has at least one  $J$-holomorphic representative, possibly singular, reducible or having multiple components.

By virtue of \slsf{Theorem \ref{thm-ruled}}, no matter what $J$ was chosen, our manifold $X$ admits the smooth $J$-holomorphic ruling $\pi$ by rational curves in the class $\bff$. 

Since $\bfb \cdot \bff > 0$, it follows from positivity of intersections that any $J$-holomorphic representative $B$ of the class $\bfb$ must either intersect a $J$-holomorphic fiber of $\pi$ or must contain this fiber completely.

\medskip%
\sla First assume that $B$ is irreducible. Then it is of genus not greater than $1$ because of the adjunction formula. This curve is of genus $1$ because every spherical homology class of $X$ is proportional to $\bff$.  We now can apply the adjunction formula one more time to conclude that $B$ is smooth, i.e. $J \in \calj_{\cala}$.

\medskip%
\slb The curve $B$ is reducible but contains no irreducible components which are the fibers of $\pi$. Then it contains precisely two components $B_1$ and $B_2$, since $\bfb \cdot \bff = 2$. Both the curves $B_1$ and $B_2$ are smooth sections of $\pi$, and hence $[B_i] = \bft_{+} + k_i \bff$, $i=1,2$. Since $[B_1] + [B_2] = \bfb$, it follows that $k_1 + k_2 = -1$, and hence either $k_1$ or $k_2$ is negative. Thus we have that either $B_1$ or $B_2$ is a smooth $J$-holomorphic section of negative self-intersection index.

\medskip%
\slc If some of the irreducible components of $B$ are in the fibers class $\bff$, then one can apply arguments similar to that used in \sla and \slb to prove that the part $B^{\prime}$ of $B$ which contains no fiber components has a section of negative self-intersection index as a component. \qed
\end{proof}

\subsection{Rulings and almost-complex structures.}

Let $X$ be a ruled surface equipped with a ruling $\pi : X \to Y$, and let $J$ be an almost-complex structure on $X$. We shall say that $J$ \slsf{is compatible with the ruling} $\pi : X \to Y$ if each fiber $\pi^{-1}(y)$ is $J$-holomorphic. 

We wish to express our thanks to D.\,Alexeeva \cite{Al} for sharing her proof of the following statement.
\begin{prop}\label{prop-dasha}
Let $\mathcal{J}(X,\pi)$ be the space of almost-complex structures on $X$ compatible with $\pi$.

\sli $\mathcal{J}(X,\pi)$ is contractible.

\slii Any structure $J \in \mathcal{J}(X,\pi)$, as well as any compact family $J_t \in \mathcal{J}(X,\pi)$, is tamed by some symplectic form.
\end{prop}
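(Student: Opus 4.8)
The plan is to reduce the fibered condition to pointwise linear algebra on the vertical distribution. Write $V := \ker d\pi \subset TX$ for the vertical subbundle; it is an oriented real rank-$2$ bundle (oriented by the fibres, which are copies of $\cp^1$), while $TX/V \cong \pi^* TY$ is oriented by the base. A structure $J$ is compatible with $\pi$ exactly when $J(V) = V$. Throughout I take $J$ to induce the fixed orientation of $X$ and $J|_V$ to induce the fibre orientation (so the fibres are honestly positively oriented $J$-holomorphic curves); this forces the induced structure on $TX/V$ to be compatible with the base orientation, and pins down the single relevant component.

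For \sli I first fix an auxiliary splitting $TX = V \oplus H$ with $H \cong \pi^* TY$ (any connection on $\pi$ will do). Relative to this splitting every $J \in \mathcal{J}(X,\pi)$ has block form $\left(\begin{smallmatrix} A & B \\ 0 & D\end{smallmatrix}\right)$, the lower-left block vanishing precisely because $J(V)\subseteq V$, and $J^2 = -\id$ is equivalent to $A^2 = D^2 = -\id$ together with $AB + BD = 0$. The homotopy $J_s := \left(\begin{smallmatrix} A & sB \\ 0 & D\end{smallmatrix}\right)$, $s \in [0,1]$, stays in $\mathcal{J}(X,\pi)$ (the identity $AB+BD=0$ makes $J_s^2 = -\id$ for every $s$, and $J_s|_V = A$ is unchanged), so it deformation-retracts $\mathcal{J}(X,\pi)$ onto the block-diagonal structures, canonically $\mathcal{J}^{+}(V) \times \mathcal{J}^{+}(\pi^*TY)$. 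Here $\mathcal{J}^{+}$ denotes orientation-compatible complex structures on an oriented real rank-$2$ bundle; each such space is the section space of a bundle whose fibre is the space of orientation-compatible complex structures on $\rr^2$, i.e. the upper half-plane, which is contractible and fibrewise convex. A section space of a bundle with contractible, fibrewise-convex fibres over the compact $X$ is itself contractible, so each factor is contractible and hence so is $\mathcal{J}(X,\pi)$. Non-emptiness is immediate: a fibre complex structure, a complex structure on $Y$, and a connection assemble into a block-diagonal element.

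For \slii the idea is Thurston's construction of a fibrewise-positive closed form. Because $\pi$ admits a section (e.g. $Y_{\pm}$), the restriction $\sfh^2(X;\rr) \to \sfh^2(\cp^1;\rr)$ is onto, so there is a class $a$ with $\langle a, \bff\rangle > 0$ restricting to a positive multiple of the area class on every fibre. Thurston's theorem (see \cite{McD-Sa-1}) then produces a closed $2$-form $\tau$ representing $a$ that restricts to a positive area form on each fibre; since $J|_V$ is orientation-compatible, this yields $\tau(v, Jv) > 0$ for every nonzero vertical $v$. Choosing an area form $\beta$ on $Y$ compatible with the base orientation, one has $\pi^*\beta(v,Jv) = \beta(d\pi\, v,\, \bar J\, d\pi\, v) \ge 0$ with equality exactly on $V$. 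I then set $\omega_K := \tau + K\,\pi^*\beta$, which is closed, and invoke the elementary domination lemma: if a positive-semidefinite quadratic form has kernel $V$ and a second form is positive-definite on $V$, their sum is positive-definite once $K$ is large; compactness of $X$ gives a single $K$, so $\omega_K$ tames $J$. Crucially $\tau$ and $\beta$ depend only on the orientations, not on the particular $J$, so the same $\omega_K$ tames every member of a compact family $J_t$ once $K$ is chosen uniformly over the (compact) parameter space times $X$.

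The routine inputs are the block-matrix computation and the linear-algebra domination lemma. The one genuinely external ingredient, and the step I expect to require the most care, is the existence of the closed fibrewise-positive form $\tau$: this is exactly where the cohomological hypothesis enters, and it is available here only because the $S^2$-bundle $\pi$ carries a section, making the fibre area class extend over $X$. Once $\tau$ is in hand, both the taming estimate and its uniformity over families are formal.
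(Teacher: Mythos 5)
Your proposal is correct and follows essentially the same route as the paper: for \sli you reduce to the pointwise contractibility of the fibres of the bundle of compatible linear complex structures and then pass to the section space (the paper parametrizes the fibre $\jj(\rr^4,\rr^2)$ directly as a product of two half-spaces via $Je_1$ and $Je_2$, where your two-step version first kills the off-diagonal block by the homotopy $sB$ and then contracts the two rank-$2$ factors — the same linear algebra in slightly different packaging), and for \slii you use exactly the paper's Thurston form $\tau$ plus $K\,\pi^{*}\sigma$ for $K$ large, with your semidefinite-domination lemma replacing the paper's explicit basis computation with the symmetrization of $\Omega J$. Your explicit remarks on why the taming form can be chosen independently of $J$, hence uniformly over a compact family, make a point the paper leaves implicit, but there is no substantive difference in method.
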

\begin{proof}
\sli Let be $\jj(\rr^4,\rr^2)$ be the space of linear maps $J : \rr^4 \to \rr^2$ such that $J^2 = -\text{\slsf{id}}$ and $J(\rr^2) = \rr^2$, i.e. it is the space of linear complex structures preserving $\rr^2$.
In addition, we assume $\rr^4$ and $\rr^2$ are both oriented and each $J \in \jj(\rr^4,\rr^2)$ induces the given orientations for both $\rr^4$ and $\rr^2$.
We now prove the space $\jj(\rr^4,\rr^2)$ is contractible.

Indeed, let us take $J \in \jj(\rr^4,\rr^2)$. Fix two vectors $e_1 \in \rr^2$ and $e_2 \in \rr^4 \setminus \rr^2$. The vectors $e_1$ and $Je_1$ form a positively oriented basis for $\rr^2$. Therefore $Je_1$ is in the upper half-plane for $e_1$. Further, the vectors $e_1,Je_1,e_2,Je_2$ form a positively oriented basis for $\rr^4$. Therefore $Je_2$ is in the upper half-space for the hyperplane spanned on $e_1,Je_1,e_2$.

We see that the space $\jj(\rr^4,\rr^2)$ is homeomorphic to the direct product of two half-spaces, and hence it is for sure contractible.

To finish the proof of \sli we consider the subbundle $V_{x} := \ker \text{\normalfont{d}}\,\pi(x) \subset T_{x}X,\,x\in X$, of the tangent bundle $TX$ of $X$. Every $J \in \calj(X,\pi)$ is a section of the bundle $\jj(TX,V) \to X$ whose fiber over $x \in X$ is the space $\jj(T_{x}X,V_x)$. Since the fibers of $\jj(TX,V)$ are contractible; it follows that the space of section for $\jj(TX,V)$ is contractible as well.

\medskip%
\slii Again, we start with some linear algebra. Let $V$ be a 2-subspace of $W \cong \rr^4$, and let $J \in \jj(W,V)$. Choose a 2-form $\tau \in \Lambda^2(W)$ such that the restriction $\tau|_{V} \in \Lambda^2(V)$ of $\tau$ to $V$ is positive with respect to the $J$-orientation of $V$, i.e. $\tau(\xi,J\xi) > 0$. Clearly, the subspace $H := \ker \tau \subset W$ is a complement to $V$. Further, let $\sigma \in \Lambda^2(V)$ be any 2-form such that $\sigma|_{V}$ vanishes, but $\sigma|_{H}$ does not. If $H$ is given the orientation induced by $\sigma$, then the $J$-orientation of $W$ agrees with that defined by the direct sum decomposition $W \cong V \oplus H$. We now prove that $J$ is tamed by $\tau + K\, \sigma$ for $K > 0$ sufficiently large.

It is easy to show that there exists a basis $e_1,e_2 \in V,\,e_3,e_4 \in H$ for $W$ such that $J$ takes the form
\[
J = \begin{pmatrix}
0 & -1 & 1 & 0\\
1 &  0 & 0 &-1\\
0 &  0 & 0 &-1\\
0 &  0 & 1 & 0
\end{pmatrix}.
\]
The matrix $\Omega$ of $\tau + K\, \sigma$ with respect to this basis is block-diagonal, say
\[
\Omega = \begin{pmatrix}
0 & 1 & 0 & 0\\
-1& 0 & 0 & 0\\
0 & 0 & 0 & K\,\sigma + \ldots\\
0 & 0 & -K\,\sigma + \ldots & 0
\end{pmatrix}\,\, \text{\normalfont{for} $\sigma > 0$.} 
\]
It remains to check that the matrix $\Omega J$ is positive definite, i.e. $(\xi, \Omega J \xi) > 0$. A matrix is positive definite iff its symmetrization is positive definite.
It is straightforward to check that $\Omega J + (\Omega J)^{t}$ is of that kind for $K$ large enough.

Let us go back to the ruled surface $X$. The theorem of Thurston \cite{Th} (see also \slsf{Theorem 6.3} in \cite{McD-Sa-1}) ensures the existence of a closed 2-form $\tau$ on $X$ such that the restrictions of $\tau$ to each fiber $\pi^{-1}(y)$ is non-degenerate. Choose an area form $\sigma$ on $Y$.
By the same reasoning as before, any $J \in \mathcal{J}(X,\pi)$ is tamed by $\tau + K\,\pi^{*}\sigma$ for $K$ large enough. \qed
\end{proof}

\medskip%
The following theorem by McDuff motivates the study of compatible almost-complex structures, see \slsf{Lemma 4.1} in \cite{McD-B}.
\begin{thm}\label{thm-ruled}
Let $X$ be an irrational ruled surface, and let $J \in \calj(X)$. Then there exists a unique ruling $\pi : X \to Y$ such that $J \in \calj(X,\pi)$.
\end{thm}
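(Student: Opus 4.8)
The plan is to produce, for the given $J$, a foliation of $X$ by embedded $J$-holomorphic spheres in the fiber class $\bff$; the leaf projection will be the desired ruling, and the homological rigidity of $\bff$ will yield uniqueness. Throughout I use that $\bff^2 = 0$ and $c_1 \cdot \bff = 2$ (by adjunction, since a fiber is an embedded sphere of square $0$), that $\bff$ is primitive, and that every spherical homology class of $X$ is a multiple of $\bff$ --- the same fact exploited in the proof of \propo{prop-smoothcurves}.

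First I would establish existence: for each point $p \in X$ the class $\bff$ carries a $J$-holomorphic representative through $p$. Since $J \in \calj(X)$ is tamed by some symplectic form $\omega$, the pair $(X,\omega)$ is a symplectic ruled surface and $\bff$ is represented by an embedded $\omega$-symplectic sphere of square $0$. The point-constrained Gromov invariant of $\bff$ is therefore computable on a model integrable structure, whose algebraic ruling has exactly one smooth rational fiber through each point and a transverse count equal to $\pm 1$; being a deformation invariant it is nonzero for every tame $J$. Hence, for our $J$ and any $p$, the class $\bff$ has a (possibly singular or reducible) $J$-holomorphic representative $C_p$ through $p$.

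Next comes the technical heart: showing that, after applying Gromov compactness, this representative is a single embedded sphere. A Gromov limit of embedded $J$-holomorphic spheres in class $\bff$ is a cusp curve whose components all have genus-$0$ domains (trees of spheres); hence each nonconstant component is a, possibly multiply covered, $J$-holomorphic sphere, and its image class is spherical, so by the above it contributes to the limit a positive integer multiple of $\bff$ (positivity of the $\omega$-area rules out a zero or negative multiple). Writing the total class as such a sum equal to $\bff$ forces a single component contributing exactly $\bff$ and carrying a simple, degree-one map. Thus $C_p$ is irreducible of genus $0$, and the adjunction formula $A^2 = c_1 \cdot A - 2$, an equality precisely for smooth embedded curves, shows it is an embedded sphere. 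Controlling this degeneration is the main obstacle, since it is exactly where primitivity of $\bff$, positivity of area, and the classification of spherical classes must be combined.

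Finally I would assemble the foliation and prove uniqueness. Two distinct $J$-holomorphic curves in class $\bff$ have intersection number $\bff^2 = 0$, so by positivity of intersections they are disjoint; in particular through each $p$ there is exactly one such curve, since two would meet at $p$. The curves thus partition $X$ into pairwise disjoint embedded $J$-holomorphic spheres, giving a smooth foliation whose leaf space $Y$ is a closed surface and whose projection $\pi \colon X \to Y$ is a ruling with $J \in \calj(X,\pi)$. For uniqueness, any ruling $\pi'$ compatible with $J$ has fibers that are $J$-holomorphic spheres, hence lie in a primitive spherical class of square $0$; as $X$ is irrational (so $X \neq \cp^1 \times \cp^1$), such a class must be $\pm\bff$, and with orientations fixed it is $\bff$. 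The $\pi'$-fibers are then $J$-holomorphic curves in class $\bff$ and therefore coincide with the leaves constructed above, so $\pi' = \pi$.
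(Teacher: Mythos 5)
Your proof is correct, and it is essentially the canonical argument: the paper itself does not prove \refthm{thm-ruled} but quotes it as \slsf{Lemma 4.1} of McDuff \cite{McD-B}, and your template --- nonvanishing of the point-constrained invariant for $\bff$ to get a representative through every point, Gromov compactness combined with the fact that every spherical class of an irrational ruled surface is a multiple of the primitive class $\bff$ to exclude degenerate limits, adjunction for embeddedness, and positivity of intersections for disjointness and uniqueness --- is exactly the strategy the paper uses for its blown-up analogue, \lemma{lem-ruled2}. The one substantive divergence is how nonvanishing is obtained: you compute the count on an integrable model and invoke deformation invariance, which is legitimate here because by \refthm{thm-mcduff} all symplectic forms on a minimal ruled surface are deformation equivalent (this is where your argument quietly uses the classification), whereas the paper cites Taubes' Seiberg--Witten theory via \cite{McD-Sa-2}, a black box that has the advantage of working unchanged in the non-minimal case of \lemma{lem-ruled2}. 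The only step you gloss is that the pairwise disjoint spheres actually assemble into a \emph{smooth} $S^2$-fibration with surface base; this deserves a sentence, but is standard: embedded spheres with $c_1 \cdot \bff = 2 > 0$ are automatically regular (Hofer--Lizan--Sikorav), so the moduli space of $\bff$-curves is a smooth $2$-manifold, the evaluation map of the universal family is a diffeomorphism onto $X$, and hence the leaf projection $\pi$ is smooth with closed leaf space $Y$.
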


\subsection{Diffeomorphisms.}
Let $X$ be diffeomorphic to either $T^2 \times S^2$ or $T^2 \tilde{\times} S^2$, and let $\pi : X \to Y$ be a smooth ruling. Further, let $\fol(X)$ be the space of all smooth foliations of $X$ by spheres in the fiber class $\bff$ (the class $\bff$ generates $\pi_2(X)$ and, therefore, it is 
the only class that can be the fiber class of an $S^2$-fibration.)

The group $\diff(X)$ acts transitively on $\fol(X)$ as well as the group $\diff_0(X)$ acts transitively on a connected component $\fol_0(X)$ of $\fol(X)$. This gives rise to a fibration sequence
$$
\displaystyle{
\mathcal{D} \cap \diff_0(X) \to \diff_0(X) \to \fol_0(X),
}
$$
where $\mathcal{{D}}$ is the group of fiberwise diffeomorphisms of $X$. By the definition of $\mathcal{D}$ there exists a projection homomorphism $\tau : \mathcal{D} \to \diff(T^2)$ such that for every $F \in \mathcal{D}$ we have a commutative diagram
$$
\displaystyle{
\begin{CD}
X @>F>> X \\
@V{\pi}VV @VV{\pi}V \\
T^2 @>>{\tau(F)}> T^2,
\end{CD}
}
$$
which induces the corresponding commutative diagram for homology
$$
\displaystyle{
\begin{CD}
\sfh_1(X;\zz) @>F_{*}>> \sfh_1(X;\zz) \\
@V{\pi_{*}}VV @VV{\pi_{*}}V \\
\sfh_1(T^2;\zz) @>>{\tau(F)_{*}}> \sfh_1(T^2;\zz).
\end{CD}
}
$$
Notice that $\tau(F)$ is isotopic to the identity if only if $\tau(F)_{*} = \text{\slsf{id}}$. Since $\pi_*$ is an isomorphism, it follows that the subgroup $\mathcal{D} \cap \diff_0(X)$ of $\mathcal{D}$ is mapped by $\tau$ to $\diff_0(T^2)$, so we end up with the restricted projection homomorphism
$$
\displaystyle{
\tau : \mathcal{D} \cap \diff_0(X) \to \diff_0(T^2).
}
$$
Since we shall exclusively be considering this restricted homomorphism,
we use the same notation $\tau$ for this. 

Given an isotopy $f_t \in \diff_0(T^2),\, f_0 = \text{\slsf{id}}$, one can lift it to an isotopy $F_t \in \mathcal{D} \cap \diff_0(X),\, F_0 = \text{\slsf{id}}$ such that $\tau(F_t) = f_t$. This immediately implies that the inclusion $\ker \tau \subset \mathcal{D} \cap \diff_0(X)$
induces an epimorphism 
\begin{equation}\label{eq-kertau}
\pi_{0}(\ker \tau) \to \pi_0(\mathcal{D} \cap \diff_0(X)).
\end{equation}
Because of this property we would like to look at the group $\ker \tau$ in more detail, but first introduce some useful notion.

Let $X$ be a smooth manifold, and let $f$ be a self-diffeomorphism $X$. Define the \slsf{mapping torus} $T(X,f)$ as the quotient of $X \times [0,1]$ by the identification $(x,1)\sim(f(x),0)$.
For the diffeomorphism $f$ to be isotopic to identity it is necessary to have the mapping torus diffeomorphic to $T(X,\text{\slsf{id}}) \cong X \times S^1$.

Let us go back to the group $\ker \tau$ that consists of bundle automorphisms of $\pi : X \to T^2$. Let $F \in \ker \tau$ be a bundle automorphism of $\pi$, and let $\gamma$ be a simple closed curve on $T^2$. By $F_\gamma$ denote the restriction of $F$ to $\pi^{-1}(\gamma) \cong S^1 \times S^2$. The mapping torus $T(\pi^{-1}(\gamma), F_\gamma)$ is either diffeomorphic to $S^2 \times T$ or $S^2 \tilde{\times} T$. In the later case we shall say that the automorphism $F$ is \slsf{twisted} along $\gamma$.
\begin{lem}\label{lem-obstr}
Let $X$ be diffeomorphic to either $T^2 \times S^2$ or $T^2 \tilde{\times} S^2$, and let $F \in \ker \tau$. Then $F$ is isotopic to the identity through $\ker \tau$ iff $T^2$
contains no curve for $F$ to be twisted along.
\end{lem}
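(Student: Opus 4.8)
The plan is to reduce the whole statement to obstruction theory for sections of a fibre bundle over $T^2$ whose fibre is homotopy equivalent to $\SO(3)$, and then to match the resulting obstruction class with the twisting invariant. First I would record the homotopy of the fibre. By Smale's theorem the group $\diff^{+}(S^2)$ of orientation-preserving diffeomorphisms of a fibre is homotopy equivalent to $\SO(3)=\rp^3$, so $\pi_1(\diff^{+}(S^2))=\zz/2$ and $\pi_2(\diff^{+}(S^2))=0$. Since every $F\in\ker\tau$ covers the identity of $T^2$ and preserves the orientation of $X$, it restricts on each fibre $\pi^{-1}(y)$ to an element of $\diff^{+}(S^2)$; thus $\ker\tau$ is homotopy equivalent to the space of sections of the bundle $\mathcal{E}\to T^2$ with fibre $\diff^{+}(S^2)$ associated to the ruling $\pi$, with the constant section $\id$ as basepoint.

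Next I would run the comparison obstruction theory for the two sections $F$ and $\id$. Because the fibre is connected the two sections agree up to homotopy over the $0$-skeleton; the primary difference obstruction to a homotopy of sections then lies in $\sfh^1(T^2;\pi_1(\diff^{+}(S^2)))=\sfh^1(T^2;\zz/2)$ (the coefficient system is untwisted since $\zz/2$ admits no non-trivial automorphism), while the only remaining obstruction would lie in $\sfh^2(T^2;\pi_2(\diff^{+}(S^2)))=\sfh^2(T^2;0)=0$. Hence $F$ is isotopic to $\id$ through $\ker\tau$ if and only if a single class $\omega_F\in\sfh^1(T^2;\zz/2)$ vanishes, and $\omega_F=0$ if and only if $\langle\omega_F,[\gamma]\rangle=0$ for the classes of a generating pair of simple closed curves.

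The heart of the argument is to identify the pairing $\langle\omega_F,[\gamma]\rangle$ with the twisting of $F$ along $\gamma$. Over a simple closed curve $\gamma$ the bundle $\pi^{-1}(\gamma)\to\gamma$ is trivial, so a trivialisation turns $F_\gamma$ into a based loop in $\diff^{+}(S^2)$ whose class in $\pi_1(\diff^{+}(S^2))=\zz/2$ is, by the very definition of the primary obstruction, equal to $\langle\omega_F,[\gamma]\rangle$. It then remains to check that the mapping torus $T(\pi^{-1}(\gamma),F_\gamma)$, an oriented $S^2$-bundle over the $2$-torus $\gamma\times S^1$, is the non-spin bundle $S^2\tilde{\times}T^2$ exactly when $[F_\gamma]\neq 0$. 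I would do this by the standard identification of oriented $S^2$-bundles over $T^2$ with $\sfh^2(T^2;\pi_2(B\SO(3)))=\zz/2$, detected by $w_2$ of the vertical tangent bundle (equivalently, by non-spinness): the loop $F_\gamma\colon S^1\to\SO(3)$ is the clutching function of this bundle around the mapping-torus circle, and under the adjunction isomorphism $\pi_1(\SO(3))=\pi_1(\Omega B\SO(3))\cong\pi_2(B\SO(3))$ its class is carried precisely to the classifying ($w_2$) class of the bundle. The main obstacle is exactly this clutching computation: one must pin down the correspondence between the $\zz/2$ measuring non-spinness of the mapping torus and the $\zz/2=\pi_1(\SO(3))$ recording the twisting of $F_\gamma$, and thereby verify that $\gamma\mapsto(\text{twisting of }F\text{ along }\gamma)$ really is the linear functional $\langle\omega_F,-\rangle$ rather than merely some $\zz/2$-valued function of $\gamma$.

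Finally I would assemble the two directions. If $F$ is isotopic to $\id$ through $\ker\tau$, then restricting the isotopy to $\pi^{-1}(\gamma)$ shows $F_\gamma$ is isotopic to $\id$ for every $\gamma$, so each mapping torus is diffeomorphic to the product $\pi^{-1}(\gamma)\times S^1\cong S^2\times T^2$, and $F$ is twisted along no curve. Conversely, if $F$ is twisted along no simple closed curve, then in particular $\langle\omega_F,[\gamma]\rangle=0$ for curves representing a basis of $\sfh_1(T^2;\zz/2)$, whence $\omega_F=0$; by the obstruction-theoretic conclusion above this means $F$ is isotopic to $\id$ through $\ker\tau$, completing the proof.
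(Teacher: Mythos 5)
Your proposal is correct and follows essentially the same route as the paper: the paper also deforms $F$ to the identity cell by cell over $T^2$, encounters the primary obstruction as a class $c(F)\in \sfh^1(T^2;\zz_2)$ whose value on a $1$-cell is exactly the twisting along that cell, and uses the vanishing of the $2$-dimensional obstruction (implicitly $\pi_2(\diff^{+}(S^2))=0$ via Smale) to finish. The only difference is one of explicitness: you spell out the section-space formulation, Smale's theorem, and the clutching identification of non-spinness of the mapping torus with $\pi_1(\SO(3))$, all of which the paper builds silently into its definition of the obstruction cochain.
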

\begin{proof}
The $2$-torus $T^2$ has a cell structure with one cell, $2$ 1-cells, and one 2-cell. Clearly, $F$ can be isotopically deformed to $\text{\slsf{id}}$ over the 0-skeleton of $T^2$.
The obstruction for extending this isotopy to the 1-skeleton of $T^2$ is a well-defined cohomology class $c(F) \in \sfh^1(T^2;\zz_2)$; the obstruction cochain $c(F)$ is the cochain whose value on a 1-cell $e$ equals 1 if $F$ is twisted along $e$ and 0 otherwise. It is evident that $c(F)$ is a cocycle.

By assumption $c(F) = 0$. Consequently there is an extension of our isotopy to an isotopy over a neighbourhood of the 1-skeleton of $T^2$, but such an isotopy always can be extended to the rest of $T^2$. \qed
\end{proof}
A short way of represent the issue algebraically is by means of the \slsf{obstruction homomorphism}
\[
c : \ker \tau \to \sfh^1(X;\zz_2)
\]
defined in the lemma; any two elements $F,G \in \ker \tau$ are isotopic to each other through $\ker \tau$ iff $c(F) = c(G)$.
\begin{lem}\label{lem-twist}
Let X be diffeomorphic to $S^2 \times T^2$, and let $F \in \ker \tau$, then $T^2$ contains no curve for $F$ to be twisted along. This means that the obstruction homomorphism vanishes.
\end{lem}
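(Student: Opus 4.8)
The plan is to detect the twisting through the spin type of the relevant mapping torus and to exploit the fact that $S^2\times T^2$ is spin. Since $F\in\ker\tau\subset\diff_0(X)$, the map $F$ is isotopic to the identity, so by the remark preceding \lemma{lem-obstr} its mapping torus is a product: $T(X,F)\cong X\times S^1$. As $X\cong S^2\times T^2$ is spin, so is $X\times S^1\cong S^2\times T^3$ (the factor $T^3$ being stably parallelizable, while $w_2(TS^2)=0$). Hence $w_2\big(T(X,F)\big)=0$.

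Next I would record the fibration structure carried by $T(X,F)$. Because $F$ lies in $\ker\tau$ it covers the identity on the base, i.e.\ $\pi\circ F=\pi$; therefore $\pi$ together with the mapping-torus coordinate assembles into a locally trivial $S^2$-bundle $\bar\pi\colon T(X,F)\to T^2\times S^1=T^3$, $[x,t]\mapsto(\pi(x),t)$. For a simple closed curve $\gamma\subset T^2$ the subset $\pi^{-1}(\gamma)$ is $F$-invariant, and one checks directly that $\bar\pi^{-1}(\gamma\times S^1)=T(\pi^{-1}(\gamma),F_\gamma)=:M_\gamma$. In other words, the mapping torus $M_\gamma$ appearing in the definition of twisting is precisely the restriction of the bundle $\bar\pi$ over the sub-torus $\gamma\times S^1\subset T^3$.

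The key step is then a short characteristic-class computation. The sub-torus $\gamma\times S^1$ is an orientable hypersurface in the orientable $3$-torus $T^3$, hence has trivial normal bundle; pulling back along $\bar\pi$, the submanifold $M_\gamma\subset T(X,F)$ also has trivial normal bundle. Consequently $TM_\gamma\oplus\underline{\rr}\cong T\big(T(X,F)\big)\big|_{M_\gamma}$, and therefore $w_2(M_\gamma)=w_2\big(T(X,F)\big)\big|_{M_\gamma}=0$, so $M_\gamma$ is spin. Of the two oriented $S^2$-bundles over $T^2$, only $S^2\times T^2$ is spin (recall $S^2\tilde{\times}T^2$ is not), so $M_\gamma\cong S^2\times T^2$ and $F$ is not twisted along $\gamma$. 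As $\gamma$ was arbitrary this gives $c(F)=0$, and as $F\in\ker\tau$ was arbitrary the obstruction homomorphism vanishes.

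The only point requiring care is the interplay between the two distinct structures on $T(X,F)$: its diffeomorphism type (a product, whence spin), which forces $w_2=0$, and its $S^2$-fibration over $T^3$, which exhibits each $M_\gamma$ as an embedded restriction. These two structures need not be compatible, but the argument uses only the intrinsic spin property and the codimension-one embedding with trivial normal bundle, so no compatibility is needed. One should also confirm that $F_\gamma$ preserves the fiber orientation (which holds because $F\in\diff_0(X)$ and $F$ covers the identity on $T^2$), so that $M_\gamma$ is genuinely one of the two oriented $S^2$-bundles over $T^2$ entering the definition of twisting.
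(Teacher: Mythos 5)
Your proposal is correct and follows essentially the same route as the paper: the paper's one-sentence proof (\enquote{the converse would imply that the mapping torus $T(X,F)$ is not spin, but $T(X,\text{\slsf{id}}) \cong S^2 \times T^2 \times S^1$ is a spin $5$-manifold}) is exactly your spin-detection argument, with the codimension-one embedding of $T(\pi^{-1}(\gamma),F_\gamma)$ with trivial normal bundle, and the restriction of $w_2$, left implicit. You have simply supplied the details the paper omits, including the two careful points (intrinsic nature of $w_2$ versus the bundle structure, and fiber-orientation preservation), all of which check out.
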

\begin{proof}
The converse would imply that the mapping torus $T(X,F)$ is not spin, but $T(X,\text{\slsf{id}}) \cong S^2 \times T^2 \times S^1$ is a spin 5-manifold. \qed 
\end{proof}

\smallskip%
The following result is due to McDuff \cite{McD-B}, but the proof follows by combining \slsf{Lemma \ref{lem-twist}} with \slsf{Lemma \ref{lem-obstr}}.
\begin{prop}\label{prop-spin}
Let X be diffeomorphic to $S^2 \times T^2$, then the group $\mathcal{D} \cap \diff_0(X)$ is connected.
\end{prop}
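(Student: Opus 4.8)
The plan is to deduce the statement directly from the epimorphism \eqref{eq-kertau} together with the two preceding lemmas, with essentially no new computation required. First I would recall that \eqref{eq-kertau} exhibits the map
$\pi_0(\ker\tau) \to \pi_0(\cald \cap \diff_0(X))$
as a surjection, obtained by lifting isotopies from $\diff_0(T^2)$. Consequently, in order to prove that $\cald \cap \diff_0(X)$ is connected it suffices to prove that the subgroup $\ker\tau$ is connected, i.e. that $\pi_0(\ker\tau)$ is trivial.

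Next I would fix an arbitrary bundle automorphism $F \in \ker\tau$ and feed it into the obstruction machinery of \lemma{lem-obstr}. Since $X$ is diffeomorphic to $S^2 \times T^2$, \lemma{lem-twist} guarantees that $T^2$ carries no simple closed curve along which $F$ is twisted; equivalently, the value of the obstruction homomorphism $c(F) \in \sfh^1(X;\zz_2)$ vanishes. With $c(F) = 0$ in hand, \lemma{lem-obstr} asserts precisely that $F$ is isotopic to the identity within $\ker\tau$.

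Since $F$ was arbitrary, every element of $\ker\tau$ lies in the identity component, so $\ker\tau$ is connected and $\pi_0(\ker\tau)$ is trivial. Substituting this into the surjection \eqref{eq-kertau} forces $\pi_0(\cald \cap \diff_0(X))$ to be trivial as well, which is exactly the claim.

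As for the main obstacle: the genuine content is already packaged inside \lemma{lem-twist} (the spin obstruction vanishing for $S^2 \times T^2$) and \lemma{lem-obstr} (the isotopy criterion), so the proof at this stage is only a short assembly. The one point deserving a line of care is the legitimacy of replacing $\cald \cap \diff_0(X)$ by $\ker\tau$ at the level of $\pi_0$, but this is precisely what the surjectivity in \eqref{eq-kertau} provides. I therefore expect no real difficulty here; the argument is a formal combination of \lemma{lem-twist} and \lemma{lem-obstr} through \eqref{eq-kertau}.
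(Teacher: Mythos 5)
Your proposal is correct and coincides with the paper's own argument: the paper proves this proposition by exactly the combination you describe, namely \lemma{lem-twist} (vanishing of the obstruction for the spin manifold $S^2 \times T^2$) fed into \lemma{lem-obstr}, with the passage from $\ker \tau$ to $\mathcal{D} \cap \diff_0(X)$ supplied by the epimorphism \eqref{eq-kertau}. You have merely written out explicitly the assembly that the paper's one-line proof leaves implicit.
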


\smallskip%
In what follows we need a non-spin analogue of this Proposition for the case of elliptic ruled surfaces.
\begin{prop}\label{prop-nonspin}
Let X be diffeomorphic to $S^2 \tilde{\times} T^2$, then the group $\mathcal{D} \cap \diff_0(X)$ is connected.
\end{prop}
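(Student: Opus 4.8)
The plan is to follow the strategy behind \propo{prop-spin} as closely as possible, replacing the spin obstruction—which is unavailable here, since $S^2\tilde{\times}T^2$ is not spin—by a reduction to the spin case through a double cover of the base. By the epimorphism \eqref{eq-kertau} it suffices to prove that $\ker\tau$ is connected, and by \lemma{lem-obstr} together with the obstruction homomorphism $c\colon\ker\tau\to\sfh^1(X;\zz_2)$ this amounts to showing that no $F\in\ker\tau$ is twisted along any simple closed curve on $T^2$. So I would argue by contradiction: assume $F\in\ker\tau$ is twisted along some curve $\gamma$, which we may take to be one of the two standard generators of $\pi_1(T^2)$.

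The key idea is that, although $S^2\tilde{\times}T^2$ is non-spin, a suitable connected double cover of its base turns the ruling into the spin bundle $S^2\times T^2$, to which \lemma{lem-twist} does apply. Oriented $S^2$-bundles over $T^2$ are classified by a single characteristic class $w\in\sfh^2(T^2;\zz_2)\cong\zz_2$ (the second Stiefel--Whitney class of the bundle), with $w$ the generator $a^{*}\cup b^{*}$ for the non-trivial bundle; here $a,b$ is a basis of $\pi_1(T^2)$ chosen so that $\gamma=a$. I would then take the double cover $p\colon\wt T^2\to T^2$ classified by $b^{*}\in\sfh^1(T^2;\zz_2)$, i.e. the cover unwrapping the direction transverse to $\gamma$. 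The pulled-back ruling $\wt X:=p^{*}X$ is an $S^2$-bundle over $\wt T^2\cong T^2$ with characteristic class $p^{*}w=p^{*}(a^{*}\cup b^{*})=(p^{*}a^{*})\cup(p^{*}b^{*})=0$, since $b^{*}$ lies in $\ker p^{*}$. As $w$ distinguishes the two oriented $S^2$-bundles over the torus, this forces $\wt X\cong S^2\times T^2$, which is spin.

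Next I would lift the data to $\wt X$. Because $F\in\diff_0(X)$ acts trivially on $\pi_1(X)$, it preserves the index-two subgroup defining $\wt X$ and hence lifts to a fiberwise diffeomorphism $\wt F$ of $\wt X$ covering the identity on $\wt T^2$; lifting an ambient isotopy from $\id$ to $F$ shows $\wt F\in\diff_0(\wt X)$, so $\wt F\in\ker\wt\tau$. Since $p^{*}b^{*}=0$, the curve $\gamma=a$ lifts to a curve $\wt\gamma$ mapping homeomorphically onto $\gamma$, and the monodromy of $\wt F$ along $\wt\gamma$ coincides with that of $F$ along $\gamma$; in particular $\wt F$ is again twisted along $\wt\gamma$. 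But $\wt X\cong S^2\times T^2$, so \lemma{lem-twist} asserts that $\wt F$ can be twisted along no curve---a contradiction. Hence no $F\in\ker\tau$ is twisted, $\ker\tau$ is connected, and by \eqref{eq-kertau} so is $\cald\cap\diff_0(X)$.

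The main obstacle is the middle step: making precise that a double cover transverse to the twisting curve simultaneously trivializes the characteristic class of the ruling (so that the cover is the spin surface $S^2\times T^2$) and keeps $\gamma$ a degree-one lift (so that the twisting survives upstairs). Once this cohomological bookkeeping---in essence the single computation $p^{*}(a^{*}\cup b^{*})=0$ with $b^{*}\in\ker p^{*}$---is in place, the reduction to \lemma{lem-twist} is immediate.
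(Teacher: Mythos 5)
Your argument has a fatal gap at precisely the step you flag as ``the main obstacle,'' and it is not repairable: the claim that the lift $\wt F$ covering $\id_{\wt T^2}$ lies in $\diff_0(\wt X)$ is false in general. Lifting an ambient isotopy $F_t$ from $\text{\slsf{id}}$ to $F$ does produce \emph{a} lift of $F$ in $\diff_0(\wt X)$, but since the intermediate $F_t$ are not fiberwise you have no control over \emph{which} of the two lifts you land on; the two lifts differ by the deck transformation of $\wt X \to X$, which need not lie in $\diff_0(\wt X)$. Concretely, in Suwa's model (\refsubsection{sec-complex}) take $X = X_{\cala} = (\cp^1 \times Y)/\calg$ ruled over $Y' = Y/\langle \tau_1/2,\, \tau_2/2\rangle$, and let $F = t_2 \colon [z,u] \mapsto [z, u + \tau_2/2]$, an element of $\ker\tau$ isotopic to the identity through the torus $\mathcal{T}$. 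Over the loop $\alpha$ in the $\tau_1/2$-direction the ruling has holonomy $z \mapsto -z$ while $t_2$ acts fiberwise by $z \mapsto 1/z$; these two rotations by $\pi$ about orthogonal axes admit no commuting lifts to $\SU(2)$, so $t_2$ is twisted along $\alpha$. Now pass, as you propose, to the cover unwrapping the transverse direction: $\wt T^2 = Y/\langle \tau_1/2 \rangle$ and $\wt X = (\cp^1 \times Y)/\langle g_1 \rangle \cong S^2 \times T^2$. The two lifts of $t_2$ are $L_1 \colon [z,u] \mapsto [z, u+\tau_2/2]$, which covers the \emph{deck translation} of $\wt T^2 \to Y'$ rather than the identity (it is $L_1$ that the lifted isotopy through $u \mapsto u + s\tau_2/2$ ends at), and $\wt F \colon [z,u] \mapsto [1/z, u]$, which does cover $\id_{\wt T^2}$ but is fiberwise $z \mapsto 1/z$, hence twisted along $\wt\alpha$ --- and therefore, by the very mapping-torus/spin argument underlying \lemma{lem-twist}, \emph{not} in $\diff_0(\wt X)$. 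So \lemma{lem-twist} cannot be applied to $\wt F$, and your contradiction evaporates.

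The deeper problem is that the statement you are trying to prove --- that no $F \in \ker\tau$ is twisted, i.e.\ that $\ker\tau$ is connected --- is simply false for the non-spin manifold: the example above shows the obstruction homomorphism $c \colon \ker\tau \to \sfh^1(T^2;\zz_2)$ is \emph{surjective}, so $\pi_0(\ker\tau) \cong \zz_2 \oplus \zz_2$. The paper's proof runs in exactly the opposite direction: it \emph{exhibits} twisted elements, namely the $2$-torsion subgroup $\mathcal{T}_2 \subset \mathcal{T}$ of the torus action on $X_{\cala}$, checks that $c$ maps $\mathcal{T}_2$ isomorphically onto $\sfh^1(T^2;\zz_2)$, and observes that these representatives of all components of $\ker\tau$ are nonetheless isotopic to the identity \emph{inside} $\mathcal{D} \cap \diff_0(X)$, through the connected torus $\mathcal{T}$ (an isotopy that moves the base, hence leaves $\ker\tau$). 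Combined with the epimorphism \eqref{eq-kertau}, this forces $\pi_0(\mathcal{D} \cap \diff_0(X))$ to be trivial. In short: for $S^2 \tilde{\times} T^2$ one must show that the components of $\ker\tau$ die in the larger group, not that they do not exist; any strategy, like yours, aimed at proving $c \equiv 0$ is doomed from the start.
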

\begin{proof}
Fix any cocycle $c \in \sfh^1(X;\zz_2) \cong \zz_2 \oplus \zz_2$, then we claim there exists $F \in \ker \tau$ such that $c(F) = c$ and, moreover, $F$ is isotopic to $\text{\slsf{id}}$ through diffeomorphisms $\mathcal{D} \cap \diff_0(X)$. It follows from Suwa's model, see \refsubsection{sec-complex}, that the automorphism group for the complex ruled surface $X_\mathcal{A}$ contains the complex torus $\mathcal{T}$ as a subgroup. By construction, it is clear that $\mathcal{T}$ is a subgroup of $\mathcal{D} \cap \diff_0(X)$. Besides that, the 2-torsion subgroup $\mathcal{T}_2 \cong \zz_2 \oplus \zz_2$ of $\mathcal{T}$ is a subgroup of $\ker \tau$. We trust the reader to check $\mathcal{T}_2$ is mapped isomorphically by the obstruction homomorphism to $\sfh^1(T^2;\zz_2)$.

The algebra behind this argument is expressed by a commutative diagram
\[
\displaystyle{
\begin{CD}
\mathcal{T}_2 @>i>> \ker \tau @>j>> \mathcal{D} \cap \diff_0(X) \\
@VVV @VVV @VVV \\
\pi_0(\mathcal{T}_2) @>i_{*}>> \pi_0(\ker \tau) @>j_{*}>> \pi_0(\mathcal{D} \cap \diff_0(X)),
\end{CD}
}
\]
where $i_{*}$ is an isomorphism, $j_{*} \circ i_{*}$ is zero, and therefore $j_{*}$ is zero as well. But we already know that $j_{*}$ is an isomorphism, and hence $\pi_0(\mathcal{D} \cap \diff_0(X))$ is trivial. \qed
\end{proof}

\subsection{Vanishing of elliptic twists.}\label{sec-vanish}

Here is the part where a proof of \slsf{Theorem \ref{thm-vanish}} comes. We split it into a few pieces.
Let $X$ be the symplectic ruled 4-manifold $(S^2 \tilde{\times} T^2, \omega_{\mu})$, $
[\omega_{\mu}] = \bft_{+} - \mu\, \bft_{-}$, 
and let $\Omega_{\mu}$ be the space of symplectic forms on $X$ that are isotopic to $\o_{\mu}$.
Here we work with the connected component of $\calj(X)$ that contains $\calj(X,\Omega_{\mu})$; the same applies to $\calj_{\cala}$ and $\calj_{1-2k}$.

\begin{lem}\label{lem-wc}
$\calj_{\cala} \subset \calj(X,\Omega_{\mu})$ for every $\mu \in (-1,1)$.
\end{lem}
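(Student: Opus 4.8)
The plan is to take an arbitrary $J\in\calj_{\cala}$ and manufacture, by inflation, a symplectic form that tames $J$ and sits in the exact cohomology class $[\o_\mu]=\bft_{+}-\mu\,\bft_{-}$; the whole argument turns on the observation that this class is a strictly positive combination of two square-zero $J$-curve classes precisely when $\mu\in(-1,1)$.

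First I would list the curves available for inflation. By definition of $\calj_{\cala}$ there is a smooth embedded irreducible $J$-holomorphic curve $B$ in the class $\bfb=\bft_{+}+\bft_{-}$, with $\bfb^2=0$. By \refthm{thm-ruled} the structure $J$ is compatible with a ruling, and a generic (regular) fibre is a smooth embedded $J$-sphere $F$ in the fibre class $\bff=\bft_{+}-\bft_{-}$, again with $\bff^2=0$. Since $\calj_{\cala}\subset\calj$, the structure $J$ is tamed by some symplectic $\o_0$; using the standing identification of homology and cohomology I write $[\o_0]=a\,\bfb+b\,\bff$ in the $\rr$-basis $\{\bfb,\bff\}$ of $\sfh^2(X;\rr)$.

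Next I apply \refthm{thm-Infla} twice, keeping the \emph{same} $J$: inflating $\o_0$ along $B$ yields taming forms of class $[\o_0]+s\,\bfb$, and inflating the result along $F$ yields taming forms of class $[\o_0]+s\,\bfb+r\,\bff$ for all $s,r\ge0$. The target decomposes as
\[
\bft_{+}-\mu\,\bft_{-}=\frac{1-\mu}{2}\,\bfb+\frac{1+\mu}{2}\,\bff,
\]
and both coefficients are positive exactly because $\mu\in(-1,1)$. Hence, choosing $c>0$ so large that $c\,\frac{1-\mu}{2}>a$ and $c\,\frac{1+\mu}{2}>b$, the values $s=c\,\frac{1-\mu}{2}-a$ and $r=c\,\frac{1+\mu}{2}-b$ are nonnegative and admissible, producing a symplectic form $\o'$ that tames $J$ with $[\o']=c\,(\bft_{+}-\mu\,\bft_{-})$. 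Rescaling, $\tfrac1c\,\o'$ still tames $J$ and lies in the class $[\o_\mu]$.

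It remains to upgrade \enquote{cohomologous} to \enquote{isotopic}. Since $\tfrac1c\,\o'$ and $\o_\mu$ are cohomologous symplectic forms on a ruled surface, they are isotopic, so $\tfrac1c\,\o'\in\Omega_{\mu}$ and therefore $J\in\calj(X,\Omega_{\mu})$. I expect the main obstacle to lie not in the inflation bookkeeping, which is forced once the sign computation above is made, but in its two standard inputs: securing a genuinely embedded $J$-holomorphic fibre in the class $\bff$ so that \refthm{thm-Infla} applies, and the passage from cohomologous to isotopic forms, which rests on the uniqueness of symplectic forms in a fixed class on ruled surfaces (see \cite{McD-B}, cf. \refthm{thm-mcduff}) and is precisely where the hypothesis $\mu\in(-1,1)$ does its work.
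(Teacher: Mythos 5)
Your proof is correct and is essentially the paper's own argument: the paper's one-line proof takes an arbitrary taming form for $J$, inflates along the classes $\bfy_{+}-\bfy_{-}$ and $\bfy_{+}+\bfy_{-}$, and rescales, which is exactly your two-step inflation once the decomposition $\bft_{+}-\mu\,\bft_{-}=\frac{1-\mu}{2}\,\bfb+\frac{1+\mu}{2}\,\bff$ (positive coefficients iff $\mu\in(-1,1)$) is written out. Your explicit handling of the passage from cohomologous to isotopic forms via McDuff's uniqueness/deformation-to-isotopy result only fills in a detail the paper leaves implicit.
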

\begin{proof}
For every $J \in \calj_{\cala}$ we take any symplectic form $\omega$ such that $J$ is $\omega$-tamed. Then inflate $\omega$ along the classes $\bfy_{+} - \bfy_{-}$ and $\bfy_{+} + \bfy_{-}$, and then rescale it. \qed
\end{proof}
\begin{lem}\label{lem-suka}
$\calj_{\cala} = \calj(X,\Omega_{\mu})$ for every $\mu \in (-1,0]$.
\end{lem}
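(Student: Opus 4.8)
The plan is to obtain the stated equality by pairing \lemma{lem-wc}, which already supplies the inclusion $\calj_{\cala} \subset \calj(X,\Omega_{\mu})$ for every $\mu \in (-1,1)$, with the reverse inclusion $\calj(X,\Omega_{\mu}) \subset \calj_{\cala}$, which I would prove only under the stronger hypothesis $\mu \in (-1,0]$ by an area-positivity argument. Thus the whole task reduces to showing that when $\mu \leq 0$ no structure $J \in \calj(X,\Omega_{\mu})$ can sit outside $\calj_{\cala}$.

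First I would fix $J \in \calj(X,\Omega_{\mu})$ together with a taming form $\omega \in \Omega_{\mu}$. The key preliminary observation is cohomological: since $\diff_0(X)$ acts transitively on $\Omega_{\mu}$ and acts trivially on cohomology, every form in $\Omega_{\mu}$ represents the single fixed class $[\omega_{\mu}] = \bft_{+} - \mu\,\bft_{-}$, so the symplectic area of any homology class is computed by cup product with $[\omega_{\mu}]$. By \propo{prop-smoothcurves} the structure $J$ carries a smooth irreducible $J$-holomorphic curve either in $\bfb$ or in $\bft_{+} - k\,\bff$ for some integer $k > 0$. In the first case $J \in \calj_{\cala}$ by definition, so the real content lies in excluding the second case for $\mu \leq 0$.

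So suppose $J$ admitted such a curve $C$ in the class $\bft_{+} - k\,\bff$. Using $\bff = \bft_{+} - \bft_{-}$ and the intersection form $\mathrm{diag}(1,-1)$ in the basis $(\bft_{+},\bft_{-})$, the area is
\[
\int_{C}\omega \;=\; [\omega_{\mu}]\cdot(\bft_{+} - k\,\bff) \;=\; 1 - k\,(1-\mu).
\]
Since $C$ is a nonconstant $J$-holomorphic curve tamed by $\omega$, its area is strictly positive, which forces $k(1-\mu) < 1$. But for $\mu \in (-1,0]$ one has $1 - \mu \geq 1$, hence $k < 1/(1-\mu) \leq 1$, contradicting $k \geq 1$. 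Therefore the second alternative never occurs, $J$ must lie in $\calj_{\cala}$, and the reverse inclusion follows; combined with \lemma{lem-wc} this yields the claimed equality.

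I do not anticipate a serious obstacle: once \propo{prop-smoothcurves} is available, the argument is an elementary computation with the intersection form, exploiting that for $\mu \le 0$ the class $\bft_{+}-k\,\bff$ has nonpositive $[\omega_{\mu}]$-area for all $k \ge 1$. The only point requiring care is the bookkeeping guaranteeing that each element of $\Omega_{\mu}$ represents the single class $[\omega_{\mu}]$, so that the area pairing is legitimate and the disjoint decomposition \eqref{eq-decom} indeed confines $J$ to $\calj_{\cala}$.
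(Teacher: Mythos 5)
Your proof is correct and takes essentially the same route as the paper's: the paper also combines \lemma{lem-wc} with the decomposition \eqref{eq-decom} (the content of \propo{prop-smoothcurves}), dismissing the strata $\calj_{1-2k}$ for $\mu \in (-1,0]$ as ``clear.'' Your explicit area computation $[\omega_{\mu}]\cdot(\bft_{+}-k\,\bff) = 1-k(1-\mu) \leq 0$ simply spells out that step, so the two arguments coincide in substance.
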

\begin{proof}
It is clear that $\calj(X,\Omega_{\mu})$ does not contain the structures $\calj_{1-2k}$ for $\mu \in (-1,0]$, and hence by \eqref{eq-decom} and \slsf{Lemma \ref{lem-wc}} the proof follows. \qed
\end{proof}

\smallskip%
This means that there is no topology change for the space $\calj(X,\Omega_{\mu})$ when $\mu$ is being varied in $(-1,0]$. In particular,
\[
\pi_1(\calj(X,\Omega_{\mu})) = \pi_1(\calj_{\cala}(X))\quad \text{\normalfont{for}}\ \mu \in (-1,0].
\]
\begin{lem}\label{lem-after0}
$\calj_{1-2\,k} \subset \calj(X,\Omega_{\mu})$ iff $\mu \in \left(1-\dfrac{1}{k},1\right)$.
\end{lem}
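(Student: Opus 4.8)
The plan is to reduce the set-theoretic inclusion to a pair of numerical positivity conditions: one from positivity of symplectic areas (for necessity) and one from the inflation technique (for sufficiency). First I would record the two relevant classes and their pairings. Writing the curve class as $\bft_{+}-k\,\bff=(1-k)\,\bft_{+}+k\,\bft_{-}$, its self-intersection is $1-2k$, and a direct computation with the intersection form ($\bft_{+}^{2}=1$, $\bft_{-}^{2}=-1$, $\bft_{+}\cdot\bft_{-}=0$) gives
\[
[\omega_{\mu}]\cdot(\bft_{+}-k\,\bff)=1-k(1-\mu),
\qquad
[\omega_{\mu}]\cdot\bff=1-\mu .
\]
Thus $[\omega_{\mu}]\cdot(\bft_{+}-k\,\bff)>0$ is equivalent to $\mu>1-\tfrac1k$, while $[\omega_{\mu}]\cdot\bff>0$ is equivalent to $\mu<1$; together these are exactly $\mu\in(1-\tfrac1k,1)$. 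I also note $[\omega_{\mu}]^{2}=1-\mu^{2}>0$ throughout $(-1,1)$.

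For the forward implication I would argue by necessity of positive area. Fixing any $J\in\calj_{1-2k}$ (the locus is nonempty, containing for instance the integrable structure on the surface with a section of self-intersection $1-2k$), the inclusion provides a symplectic form $\omega'$ isotopic to $\omega_{\mu}$, hence cohomologous to it, that tames $J$. Since $J$ carries a smooth $J$-holomorphic curve $C$ in the class $\bft_{+}-k\,\bff$, its $\omega'$-area is positive, so $\int_{C}\omega'=[\omega_{\mu}]\cdot(\bft_{+}-k\,\bff)>0$, forcing $\mu>1-\tfrac1k$; together with the standing hypothesis $\mu<1$ this yields $\mu\in(1-\tfrac1k,1)$.

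For the reverse implication I would construct, for each $J\in\calj_{1-2k}$, a taming form whose class is exactly $[\omega_{\mu}]$, using the inflation/deflation machinery of \refthm{thm-Infla} and \refthm{thm-Defla}. The geometric input is that $J$ admits a ruling by $J$-holomorphic fibers in the class $\bff$ (\refthm{thm-ruled}) and the negative section $C$ in the class $\bft_{+}-k\,\bff$. One first checks that $C$ is the unique irreducible $J$-holomorphic curve of negative self-intersection: two distinct irreducible sections $\bft_{+}+a\,\bff$, $\bft_{+}+b\,\bff$ with $a,b\le-1$ would meet in $1+a+b<0$ points, contradicting positivity of intersections. Hence the cone of effective $J$-classes is spanned by $[C]$ and $\bff$, every other irreducible curve being a nonnegative combination of these (for instance the complementary section $\bft_{+}+(k-1)\bff=[C]+(2k-1)\bff$). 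By the Lalonde--McDuff--Bu\c se inflation technique the classes realized by $J$-taming forms fill the open dual cone $\{\,e:\ e\cdot[C]>0,\ e\cdot\bff>0\,\}$ (on which $e^{2}>0$ is automatic), and by the computation above $[\omega_{\mu}]$ lies in this cone precisely when $\mu\in(1-\tfrac1k,1)$. Finally, a taming form in the class $[\omega_{\mu}]$ is, by uniqueness up to isotopy of symplectic forms in a fixed class on ruled surfaces (\refthm{thm-mcduff} and \cite{McD-B}), isotopic to $\omega_{\mu}$; hence it lies in $\Omega_{\mu}$, so $J\in\calj(X,\Omega_{\mu})$.

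The main obstacle is the sufficiency step: converting the abstract positivity conditions into an honest taming form of the prescribed cohomology class. Concretely this means (i) controlling all negative $J$-holomorphic curves, so that the taming cone is exactly the dual of $\langle[C],\bff\rangle$, and (ii) running the deflation of \refthm{thm-Defla} along $C$ together with inflation along $\bff$ so as to land on $[\omega_{\mu}]$ while respecting the bound $0\le s<\omega(C)/(2k-1)$ of the deflation theorem --- a feasibility bound governed by the very inequality $\mu>1-\tfrac1k$. The forward direction is routine by comparison.
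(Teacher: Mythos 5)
Your proof is correct and takes essentially the same route as the paper's one-line argument: the \enquote{only if} direction is forced by positivity of the area of the $(1-2k)$-section, and the \enquote{if} direction is exactly the paper's recipe of deflating along $\bft_{+}-k\,\bff$ and inflating along $\bff$ (with rescaling and deformation-to-isotopy supplying membership in $\Omega_{\mu}$). Your extra cone analysis --- uniqueness of the negative section and identification of the full taming cone --- is harmless but not needed, since the constructive deflation/inflation step already reaches the class $[\omega_{\mu}]$ from an arbitrary taming form.
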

\begin{proof}
The ``only if'' part is obvious, while the ``if'' can be proved by deflating along $\bfy_{+}-k\,(\bfy_{+}-\bfy_{-})$ and inflating along $\bfy_{+}-\bfy_{-}$. \qed 
\end{proof}

\smallskip%
Combining \slsf{Lemma \ref{lem-wc}} with \slsf{Lemma \ref{lem-after0}}, as well as the fact that the higher codimension submanifolds $\calj_{1-2\,k}$, $k \geq 2$ do not affect the fundamental group of $\calj(X,\Omega_{\mu})$, we see that there is no topology change in $\pi_1(\calj(X,\Omega_{\mu}))$ as $\mu$ increased within $(0,1)$, i.e. we have
\begin{equation}\label{eq-foleq}
\pi_1(\calj(X,\Omega_{\mu})) = \pi_1(\calj(X))\quad \text{\normalfont{for}}\ \mu \in (0,1).
\end{equation}
Diagram \eqref{d-kron} implies that the symplectic mapping class group is the cokernel of $\nu_{*} : \pi_1(\diff_0(X)) \to \pi_1(\calj(X))$ which we now show is trivial.
\begin{lem}\label{lem-epiclemma}
$\nu_{*}$ is an epimorphism.
\end{lem}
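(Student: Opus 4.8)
The plan is to factor $\nu$ through the space of sphere-foliations $\fol_0(X)$ and thereby reduce the claim to the surjectivity on $\pi_1$ of the orbit map $\diff_0(X)\to\fol_0(X)$, which we already control through the fibration from the \enquote{Diffeomorphisms} subsection. By \refthm{thm-ruled} every $J\in\calj(X)$ determines a unique ruling, so sending $J$ to the foliation $\scrf_J$ by its $J$-holomorphic fibers in the class $\bff$ defines a map $p\colon\calj(X)\to\fol_0(X)$. The fibre of $p$ over a foliation, \ie over a ruling $\pi$, is exactly the space $\calj(X,\pi)$ of almost-complex structures compatible with $\pi$, which is contractible by \propo{prop-dasha}. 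Hence $p$ is a fibration with weakly contractible fibres, and the long exact sequence of the fibration shows it is a weak homotopy equivalence; in particular $p_{*}\colon\pi_1(\calj(X))\to\pi_1(\fol_0(X))$ is an isomorphism.

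Next I would observe that $p$ intertwines $\nu$ with the orbit map. Pushing forward an almost-complex structure by a diffeomorphism also pushes forward its associated foliation, so
\[
p(\nu(f))=p(f_{*}J_0)=\scrf_{f_{*}J_0}=f_{*}\scrf_{J_0}.
\]
Writing $\scrf_0:=\scrf_{J_0}$ for the foliation attached to the base point, this says that $p\circ\nu\colon\diff_0(X)\to\fol_0(X)$ is precisely the orbit map $f\mapsto f_{*}\scrf_0$, that is, the bundle projection in the fibration sequence
\[
\mathcal{D}\cap\diff_0(X)\to\diff_0(X)\to\fol_0(X)
\]
from the \enquote{Diffeomorphisms} subsection.

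It then remains to run the long exact homotopy sequence of this last fibration:
\[
\pi_1(\diff_0(X))\xrightarrow{\;(p\circ\nu)_{*}\;}\pi_1(\fol_0(X))\to\pi_0(\mathcal{D}\cap\diff_0(X))\to\pi_0(\diff_0(X))=1.
\]
By \propo{prop-nonspin} the group $\pi_0(\mathcal{D}\cap\diff_0(X))$ is trivial, so exactness forces the orbit map to be surjective on $\pi_1$. Since $p_{*}$ is an isomorphism and $p_{*}\circ\nu_{*}=(p\circ\nu)_{*}$ is onto, the map $\nu_{*}$ is onto as well, which is exactly the assertion.

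The main obstacle I anticipate is the first step: verifying rigorously that $p$ is a genuine (Serre) fibration, so that the contractibility of its fibres coming from \propo{prop-dasha} really yields a weak homotopy equivalence. This requires that the assignment $J\mapsto\scrf_J$ be continuous in the relevant $C^{k}$-topology and that $p$ enjoy the homotopy lifting property; the uniqueness in \refthm{thm-ruled} guarantees well-definedness, but one must still produce local sections of $p$ over $\fol_0(X)$ and check compatibility of the fibre inclusions $\calj(X,\pi)\hookrightarrow\calj(X)$. Everything downstream is formal once this homotopy equivalence and the commuting triangle are in place.
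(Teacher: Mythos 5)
Your proposal is correct and follows essentially the same route as the paper: the authors likewise extend $\nu$ to the composition $\diff_0(X)\to\calj(X)\to\fol_0(X)$, use \refthm{thm-ruled} and \propo{prop-dasha} to see that the second arrow is a homotopy equivalence, and then read off surjectivity on $\pi_1$ from the homotopy exact sequence of the foliation fibration together with the vanishing of $\pi_0(\mathcal{D}\cap\diff_0(X))$ from \propo{prop-nonspin}. The technical caveat you flag about $p$ being a genuine fibration is glossed over in the paper as well, which simply asserts the homotopy equivalence.
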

\begin{proof}
Though the map $\nu : \diff_{0}(X) \to \calj(X)$ is not a fibration, it can be extended to one; namely, to
\[
\diff_{0}(X) \to \calj(X) \to \fol_0(X),
\]
where the last arrow is a homotopy equivalence, see \slsf{Theorem \ref{thm-ruled}} and \slsf{Proposition \ref{prop-dasha}}. Thus, we end up with the homotopy exact sequence
\[
\ldots \to \pi_1(\diff_0(X)) \to \pi_1(\calj(X)) \to \pi_0(\mathcal{D} \cap \diff_0(X)).
\]
If $X$ is of genus 1, the group $\pi_0(\mathcal{D} \cap \diff_0(X))$ is trivial by \slsf{Propositions} \ref{prop-spin} and \ref{prop-nonspin}. This finishes the proof. \qed
\end{proof}
The following corollary will not be used in the remainder of the paper, but it is a very natural application of \lemma{lem-epiclemma}.
\begin{corol}
The space $\calj(X)$ is homotopy simple. In other words, $\pi_1(\calj(X))$ is abelian and acts trivially on $\pi_{n}(\calj(X))$.
\end{corol}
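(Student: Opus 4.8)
The plan is to exploit the fact that, up to the connected component on which we work, $\calj(X)$ is an orbit space for the action of the connected topological group $\diff_0(X)$, and to combine this with \lemma{lem-epiclemma}. Recall that $\diff_0(X)$ acts on $\calj(X)$ by pushforward, $f\cdot J:=f_*J$; this genuinely preserves $\calj(X)$, since if $J$ is tamed by a symplectic form $\omega$ then $f_*J$ is tamed by $f_*\omega$. Moreover $\nu\colon\diff_0(X)\to\calj(X)$, $\nu(f)=f_*J_0$, is precisely the orbit map through the basepoint $J_0$. The general principle I would invoke is the classical one: whenever a path-connected topological group $G$ acts on a pointed space $(Y,y_0)$ with orbit map surjective on $\pi_1$, the space $Y$ is homotopy simple. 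Here $G=\diff_0(X)$, $Y=\calj(X)$, $y_0=J_0$, and the required $\pi_1$-surjectivity is exactly the content of \lemma{lem-epiclemma}.

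First I would fix $n\ge 1$, a class $\alpha\in\pi_n(\calj(X),J_0)$ represented by a based map $a\colon(S^n,s_0)\to(\calj(X),J_0)$, and an arbitrary loop $\gamma\in\pi_1(\calj(X),J_0)$. By \lemma{lem-epiclemma}, $\gamma$ is homotopic to $\nu\circ\tilde{\gamma}$ for some loop $\tilde{\gamma}\colon([0,1],\{0,1\})\to(\diff_0(X),\id)$. The key step is to write down the homotopy
\[
H\colon S^n\times[0,1]\to\calj(X),\qquad H(s,t):=\tilde{\gamma}(t)_*\,a(s),
\]
which is continuous because the action is, and which stays in the correct connected component because each path $t\mapsto\tilde{\gamma}(t)_*a(s)$ starts at $a(s)$ and $\tilde{\gamma}(t)$ is joined to $\id$ inside the connected group $\diff_0(X)$. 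At the two ends one has $H(\,\cdot\,,0)=H(\,\cdot\,,1)=a$, since $\tilde{\gamma}(0)=\tilde{\gamma}(1)=\id$, while the basepoint traces the loop $H(s_0,t)=\tilde{\gamma}(t)_*J_0=\nu(\tilde{\gamma}(t))=\gamma(t)$. By the very definition of the monodromy (change-of-basepoint) action, this homotopy exhibits $\gamma\cdot\alpha=[H(\,\cdot\,,1)]=[a]=\alpha$, so $\gamma$ acts trivially on $\pi_n$.

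Since $\nu_*$ is an epimorphism, every class in $\pi_1(\calj(X),J_0)$ arises as such a $\gamma$; hence all of $\pi_1$ acts trivially on each $\pi_n$, $n\ge 1$. Specializing to $n=1$, the action of $\pi_1$ on itself is conjugation, so its triviality says precisely that $\pi_1(\calj(X))$ is abelian. Together these give both assertions of the corollary. The only delicate point I anticipate is the book-keeping with connected components of $\calj(X)$, ensuring that the homotopy $H$ never leaves the component of $J_0$; but this is exactly what the connectedness of $\diff_0(X)$ supplies, and beyond \lemma{lem-epiclemma} no further geometric or analytic input is needed.
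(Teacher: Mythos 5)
Your proof is correct and is precisely the argument the paper intends: the corollary is stated there without proof, as a \enquote{very natural application} of \lemma{lem-epiclemma}, and your monodromy homotopy $H(s,t)=\tilde{\gamma}(t)_{*}\,a(s)$, whose basepoint track is $\nu\circ\tilde{\gamma}\simeq\gamma$ and whose ends are both $a$, is the standard way to cash in that surjectivity. The bookkeeping is also right, including the $n=1$ case, where triviality of the conjugation action is exactly abelianness of $\pi_1(\calj(X))$.
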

By virtue of \eqref{d-kron} and \eqref{eq-foleq}, \lemma{lem-epiclemma} immediately implies 
\begin{prop}\label{prop-onehalf}
$\pi_0(\symp^*(X,\omega_{\mu})) = 0$ for every $\mu \in (0,1)$. 
\end{prop}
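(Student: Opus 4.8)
The plan is to obtain the vanishing as a one-line cokernel computation, feeding the lemmas of this subsection into the homotopy-theoretic machinery set up in \refsubsection{sec}. First I would exploit the top row of \eqref{d-kron}: since $\diff_0(X)$ is by definition connected, the terminal group $\pi_0(\diff_0(X))$ is trivial, so exactness forces the connecting homomorphism $\del$ to be surjective and identifies
\[
\pi_0(\symp^*(X,\omega_\mu)) \cong \coker\bigl(\varphi_* : \pi_1(\diff_0(X)) \to \pi_1(\Omega_\mu)\bigr).
\]

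Next I would push this cokernel across the vertical arrows of \eqref{d-kron}. Because $\psi$ is a homotopy equivalence (McDuff's map fitting into \eqref{mcduff-map}), the induced $\psi_* : \pi_1(\Omega_\mu) \to \pi_1(\calj(X,\Omega_\mu))$ is an isomorphism, while the left vertical arrow is the identity on $\pi_1(\diff_0(X))$. Commutativity of the square yields $\psi_* \circ \varphi_* = \nu_*$, so
\[
\coker(\varphi_*) \cong \coker\bigl(\nu_* : \pi_1(\diff_0(X)) \to \pi_1(\calj(X,\Omega_\mu))\bigr).
\]
For $\mu \in (0,1)$ the identification \eqref{eq-foleq} replaces the target by $\pi_1(\calj(X))$, so the mapping class group in question is precisely $\coker\bigl(\nu_* : \pi_1(\diff_0(X)) \to \pi_1(\calj(X))\bigr)$.

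Finally I would invoke \lemma{lem-epiclemma}, which states exactly that this map $\nu_*$ is an epimorphism; its cokernel therefore vanishes, and the proposition follows for every $\mu \in (0,1)$.

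The argument is purely formal once the preparatory results are in hand, so I do not expect any genuine analytic obstacle; the only point that deserves attention is the bookkeeping in \eqref{d-kron}. One must ensure that the same basepoint $J_0$ (an $\omega_\mu$-tamed structure) and the same connected component of $\calj(X)$ are used throughout, so that \eqref{eq-foleq} and the homotopy equivalence $\psi$ really refer to the identification being exploited, and that the passage from $\pi_1(\calj(X,\Omega_\mu))$ to $\pi_1(\calj(X))$ in \eqref{eq-foleq} is compatible with the map $\nu_*$ appearing in \lemma{lem-epiclemma}. Granting this compatibility, the cokernel chase is immediate.
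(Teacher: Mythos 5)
Your proposal is correct and is essentially the paper's own proof: the authors state just before \lemma{lem-epiclemma} that diagram \eqref{d-kron} identifies $\pi_0(\symp^*(X,\omega_\mu))$ with the cokernel of $\nu_* : \pi_1(\diff_0(X)) \to \pi_1(\calj(X))$, and then deduce \propo{prop-onehalf} \enquote{by virtue of \eqref{d-kron} and \eqref{eq-foleq}} from the epimorphism statement of \lemma{lem-epiclemma}. Your extra care about basepoints and the compatibility of \eqref{eq-foleq} with $\nu_*$ is a reasonable precaution but is left implicit in the paper, which treats the deduction as immediate.
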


In order to compute the group $\pi_0(\symp^*(X,\omega_{\mu}))$ for $\mu \in (-1,0)$ it is necessary to know 
better the fundamental group of $\calj_{\cala}$.
The space $\calj_{\cala}$ is the complement to (the closure of) the elliptic divisorial locus $\cald_{\bft_{-}}$ in the ambient space $\calj(X)$. We denote by $i$ the inclusion 
\[
i : \calj_{\cala}(X) \to \calj(X).
\]
By \slsf{Lemma \ref{lem-epiclemma}} every loop $J(t) \in \pi_1(\calj_{\cala})$ can be decomposed into a product $J(t) = J_0(t) \cdot J_1(t)$, where $J_0(t) \in \im \nu_{*}$, and $J_1(t) \in \ker i_{*}$. 

By \lemma{lem-suka}, for $\mu \in (-1,0]$, $\calj_{\cala} = \calj(X,\Omega_{\mu})$. 
In particular, assumption $\sf{(A)}$ is satisfied for each loop in $\calj_{\cala}$. 
Thus, each loop in $\calj_{\cala}$, that lies in $\ker i_{*}$, could contribute drastically to $\pi_0(\symp^*(X,\omega_{\mu}))$ via the corresponding elliptic twists. 
But this will not happen, because the following holds.
\begin{lem}\label{lem-unexpectedlemma}
$\ker i_{*}\subset \im \nu_{*}$.
\end{lem}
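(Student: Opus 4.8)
The plan is to derive the inclusion from two facts: that $\im\nu_{*}$ is a normal subgroup of $\pi_1(\calj_{\cala})$, and that a \emph{generating} meridian of the divisorial locus already lies in $\im\nu_{*}$. \emph{First} I would show that $\im\nu_{*}$ is in fact \emph{central} in $\pi_1(\calj_{\cala})$. The group $\diff_0(X)$ is a topological group acting continuously on $\calj_{\cala}$ by $f\cdot J=f_{*}J$; this action preserves $\calj_{\cala}$ because $f\in\diff_0(X)$ acts trivially on homology and hence carries a $J$-holomorphic curve in the class $\bft_{-}$ to an $f_{*}J$-holomorphic one, and $\nu$ is the orbit map through the base structure. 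For the orbit map of a path-connected topological group, the \enquote{sliding} homotopy $H(\theta,s)=\delta(\theta)\cdot\gamma(s)$, with $\delta$ a loop in $\diff_0(X)$ and $\gamma$ a loop in $\calj_{\cala}$, shows that $\nu_{*}[\delta]$ commutes with every $[\gamma]\in\pi_1(\calj_{\cala})$. Hence $\im\nu_{*}$ is central, and in particular normal.

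\emph{Second} I would identify $\ker i_{*}$ geometrically. In the component under consideration $\calj_{\cala}$ is the complement in $\calj(X)$ of the closed set $\overline{\calj}_{-1}=\calj_{-1}\sqcup\calj_{-3}\sqcup\cdots$, whose top stratum $\cald_{\bft_{-}}=\calj_{-1}$ has real codimension $2$, while the remaining strata have codimension $\ge 6$ and play no role for $\pi_1$. Provided $\cald_{\bft_{-}}$ is connected, a van Kampen argument with a tubular neighbourhood shows that $i_{*}\colon\pi_1(\calj_{\cala})\to\pi_1(\calj(X))$ is onto and that $\ker i_{*}$ is the \emph{normal closure} $\langle\langle m\rangle\rangle$ of a single meridian $m$ linking $\cald_{\bft_{-}}$. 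Together with the first step this reduces the lemma to the single statement $m\in\im\nu_{*}$: for then $\langle\langle m\rangle\rangle\subseteq\im\nu_{*}$ by normality.

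\emph{Third}, and this is the geometric core, I would realize a generating meridian by an explicit loop of diffeomorphisms, using Suwa's family $p\colon\scrx\to\cc$ of \refsubsection{sec-family} and its $\cc^{*}$-action. Fix a small $t_0$, let $\Delta\subset\cc$ be a disc containing the circle $|t|=|t_0|$, choose a smooth trivialization $\Psi\colon X\times\Delta\xrar{\sim}\scrx|_{\Delta}$, and put $J_t:=\Psi_t^{*}J_{p^{-1}(t)}$. Since $p^{-1}(t)\cong X_{\cala}\in\calj_{\cala}$ for $t\ne 0$ while $p^{-1}(0)\cong X_{-1}\in\cald_{\bft_{-}}$, the path $t\mapsto J_t$ meets the wall only at $t=0$; moreover the gluing parameter $t$ in the patching relation $z_0=z_1u+t\,u^{-1}$ is exactly the normal coordinate to $\cald_{\bft_{-}}$, so the crossing is transverse of multiplicity one and the boundary loop $J_\theta:=J_{e^{i\theta}t_0}$ is a generating meridian $m$. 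Now I exploit the $\cc^{*}$-action: for $g=e^{i\theta}$ it furnishes biholomorphisms $a_\theta\colon p^{-1}(t_0)\to p^{-1}(e^{i\theta}t_0)$, and I set
\[
f_\theta:=\Psi_{e^{i\theta}t_0}^{-1}\circ a_\theta\circ\Psi_{t_0}\colon X\to X.
\]
Because $g=1$ at $\theta=2\pi$ acts as the identity, $f_0=f_{2\pi}=\id$, so $\{f_\theta\}_{\theta\in[0,2\pi]}$ is a loop based at $\id$; being a continuous path issuing from $\id$, it lies in $\diff_0(X)$. Since each $a_\theta$ is a biholomorphism, a one-line pushforward computation gives $\nu(f_\theta)=(f_\theta)_{*}J_{t_0}=J_\theta$, whence $m=[J_\theta]=\nu_{*}[f_\theta]\in\im\nu_{*}$.

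Assembling the three steps proves the lemma: $m\in\im\nu_{*}$, the subgroup $\im\nu_{*}$ is normal, and $\ker i_{*}=\langle\langle m\rangle\rangle$, so $\ker i_{*}\subseteq\im\nu_{*}$. The \textbf{main obstacle} is the third step, and inside it the verification that the loop coming from Suwa's family is genuinely a \emph{generating} meridian of $\cald_{\bft_{-}}$: one has to confirm that $\cald_{\bft_{-}}$ is connected, so that one meridian suffices, and that $t\mapsto J_t$ crosses it transversally with multiplicity one, which amounts to identifying the deformation parameter $t$ with the normal coordinate of the divisorial locus and is precisely the \enquote{jump} phenomenon recorded in \refsubsection{sec-family}. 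Once this is in place, the realization of $m$ by $\{f_\theta\}$ is forced by the $\cc^{*}$-action, whose presence was singled out in \refsubsection{sec-family} exactly for this purpose.
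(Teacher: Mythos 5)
Your overall route is the paper's route: the paper likewise reduces the lemma to showing that a meridian of $\cald_{\bft_{-}}$ lies in $\im \nu_{*}$ (using \slsf{Lemma \ref{lem-basepnt}} for base-point transport, where you use the slightly stronger Gottlieb-type centrality of $\im\nu_{*}$, which is fine), and it realizes the meridian around the integrable point $X_{-1}$ by exactly your loop of diffeomorphisms coming from Suwa's family and its $\cc^{*}$-action from \refsubsection{sec-family}. But there is a genuine gap in your second step, and it is precisely the step you flag without resolving: you reduce $\ker i_{*}$ to the normal closure $\langle\langle m\rangle\rangle$ of a \emph{single} meridian, which requires $\cald_{\bft_{-}}$ to be connected, and you never prove this. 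If $\cald_{\bft_{-}}$ had a connected component consisting entirely of non-integrable almost-complex structures, the meridians around that component would lie in $\ker i_{*}$ but would be untouched by your Suwa-family construction, and your argument would give nothing about them. So as written, you have only shown $\langle\langle m\rangle\rangle \subset \im\nu_{*}$ for the one meridian you built, not $\ker i_{*} \subset \im\nu_{*}$.

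The paper deliberately avoids proving connectedness of $\cald_{\bft_{-}}$ and substitutes a different, weaker statement that suffices: \lemma{lem-connectinglemma}, asserting that \emph{every connected component} of $\calj_{-1}$ contains at least one integrable structure. Granting this, any meridian can be slid along its component of the divisorial locus to a meridian around an integrable point, where the Suwa-family realization applies, and \slsf{Lemma \ref{lem-basepnt}} transports the conclusion back. Note that \lemma{lem-connectinglemma} is not a formality: its proof occupies the real geometric work of this part of the paper (choosing the ruling via \slsf{Theorem \ref{thm-ruled}}, producing a $\cc^{*}$-invariant integrable structure from a $\uone$-invariant connection on the $\cc$-bundle $X - C_1 \to C$, and connecting $J$ to it through structures keeping $C$ holomorphic, using \slsf{Proposition \ref{prop-dasha}} to stay tamed). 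That is the missing ingredient your proposal would need to import or reprove; your remaining steps, including the transversality of the Suwa disc to the wall, are in line with what the paper itself asserts.
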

\begin{proof}
Choose some $J_{*} \in \cald_{\bft_{-}}$, and let $\Delta$ be a $2$-disc which intersects $\cald_{T_{-}}$ transversally at the single point $J_{*}$. Denote by $J(t)$ the boundary of $\Delta$. By \slsf{Lemma \ref{lem-basepnt}} one simply needs to show that the homotopy class of $J(t)$ comes from the natural action of $\diff_{0}(X)$ on $\calj_{A}$, and the lemma will follow.

If $J_{*}$ is integrable, then one can choose $\Delta$ such that $J(t)$ is indeed an orbit of the action of a certain loop in $\diff_0(X)$, see the description of the complex-analytic family constructed in \refsubsection{sec-family}. Thus it remains to check that every structure $\calj_{*} \in \cald_{\bft_{-}}$ can be deformed to be integrable through structures on $\cald_{\bft_{-}}$. 
This will be proved by \slsf{Lemma \ref{lem-connectinglemma}} below.
\qed
\end{proof}

\begin{lem}\label{lem-basepnt}
Let $x,y \in \calj_{\cala}$, and let $H(t) \in \calj_{\cala},\,t \in [0,1]$ be a path joining them such that $H(0) = x,\, H(1) = y$. If a loop $J(t) \in \pi_1(\calj_{\cala},y),\, t \in [0,1]$ lies in the image of $\pi_1(\diff_0(X),\text{\slsf{id}}) \to \pi_1(\calj_{\cala}, y)$, then $H^{-1} \cdot J \cdot H \in \pi_1(\calj_{\cala}, x)$ lies in the image of $\pi_1(\diff_0(X),\text{\slsf{id}}) \to \pi_1(\calj_{\cala}, x)$.
\end{lem}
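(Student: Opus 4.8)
The plan is to produce $H^{-1}\cdot J\cdot H$ as the orbit, based at $x$, of the very same loop of diffeomorphisms that sweeps out $J$ when acting on $y$; the change of basepoint is then absorbed by acting on the connecting path $H$ rather than on a single point. First I would use the hypothesis to write $J$ concretely: saying that $J$ lies in the image of $\pi_1(\diff_0(X),\id)\to\pi_1(\calj_{\cala},y)$ means that $J$ is homotopic rel endpoints to the orbit loop $t\mapsto\gamma(t)_{*}y$ for some loop $\gamma\in\pi_1(\diff_0(X),\id)$. Since conjugation by the fixed path $H$ carries rel-endpoint homotopic loops to rel-endpoint homotopic loops, we may assume outright that $J(t)=\gamma(t)_{*}y$.

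Next I would build the homotopy that does the bookkeeping. Define $\Phi\colon[0,1]\times[0,1]\to\calj(X)$ by $\Phi(s,t)=\gamma(t)_{*}H(s)$; this is continuous, being the composition of $(s,t)\mapsto(\gamma(t),H(s))$ with the action map $\diff_0(X)\times\calj(X)\to\calj(X)$, $(f,J)\mapsto f_{*}J$. The essential observation is that $\Phi$ in fact lands in $\calj_{\cala}$: each $H(s)$ belongs to $\calj_{\cala}$, and every $\gamma(t)\in\diff_0(X)$ acts trivially on $\sfh_2(X;\zz)$, so it pushes a smooth $H(s)$-holomorphic curve in the class $\bfb$ forward to a smooth $\gamma(t)_{*}H(s)$-holomorphic curve in the class $\gamma(t)_{*}\bfb=\bfb$. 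Hence $\gamma(t)_{*}H(s)\in\calj_{\cala}$ for all $(s,t)$.

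I would then read off the four edges of the square. One has $\Phi(s,0)=\Phi(s,1)=H(s)$ because $\gamma(0)=\gamma(1)=\id$, while $\Phi(0,t)=\gamma(t)_{*}x \ddef L_x(t)$ and $\Phi(1,t)=\gamma(t)_{*}y=J(t)$. Since any two boundary paths joining opposite corners of a square are homotopic rel endpoints through the square, $\Phi$ supplies a homotopy, entirely inside $\calj_{\cala}$, between the two routes from $(0,0)$ to $(1,1)$; translating this standard square relation yields $H^{-1}\cdot J\cdot H\simeq L_x$ in $\pi_1(\calj_{\cala},x)$. As $L_x$ is, by construction, the image of $\gamma$ under $\pi_1(\diff_0(X),\id)\to\pi_1(\calj_{\cala},x)$, the conjugated loop lies in that image, which is exactly the assertion.

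There is no deep obstacle here: the statement is a formal equivariance (change-of-basepoint) property of the orbit map for the $\diff_0(X)$-action. The one point that genuinely requires care---and the one that makes the lemma true in $\calj_{\cala}$ rather than merely in the ambient $\calj(X)$---is the invariance of $\calj_{\cala}$ under the action, which is what keeps the homotopy $\Phi$ within $\calj_{\cala}$; this is why I isolate it above. I would add that everything is performed in the $C^{k}$ Banach model, where the action map is continuous, so that $\Phi$ is an honest homotopy of maps of the square.
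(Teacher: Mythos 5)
Your proof is correct and is essentially the paper's own argument: the paper takes the same WLOG reduction $J(t)=f_{*}(t)\,J(0)$ and then writes down the homotopy $J(s,t):=H_s^{-1}\cdot f_{*}(t)H(s)\cdot H_s$, which is precisely the based-loop reading of your square $\Phi(s,t)=\gamma(t)_{*}H(s)$ conjugated by the partial paths $H_s$. The only difference is cosmetic: you make explicit the (correct) verification that $\diff_0(X)$ preserves $\calj_{\cala}$, since it acts trivially on $\sfh_2(X;\zz)$, a point the paper leaves implicit.
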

\begin{proof}
Without loss of generality we assume that there exists a loop $f(t) \in \pi_1(\diff_0,\text{\slsf{id}})$ such that $J(t) = f_{*}(t) J(0)$.
Let $H_s$ be the piece of the path $H$ that joins the points $H(0) = x$ and $H(s)$. To prove the lemma it remains to consider the homotopy 
\[
J(s,t) := H^{-1}_s \cdot f_{*}(t)H(s) \cdot H_s,
\]
where $J(1,t) = H^{-1} \cdot J \cdot H$ and $J(0,t) = f_{*}(t)H(0)$. \qed
\end{proof}

\begin{lem}\label{lem-connectinglemma}
Every connected component of $\calj_{-1}$ contains at least one integrable structure. \end{lem}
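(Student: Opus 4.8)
The plan is to reduce the integrability question to the two structural facts already in hand — the contractibility of ruling-compatible almost-complex structures (Proposition \ref{prop-dasha}) and the uniqueness of the ruling (Theorem \ref{thm-ruled}) — and then to match the resulting geometric data against the explicit integrable surface $X_{-1}$ from \S\ref{sec-complex}. First I would attach to every $J \in \calj_{-1}$ its canonical data: by Theorem \ref{thm-ruled} there is a unique ruling $\pi_J \colon X \to Y_J$ with $J$ compatible with $\pi_J$, and since $\bft_{-}\cdot \bff = 1$ the $J$-holomorphic elliptic curve $C_J$ in the class $\bft_{-}$ is a section of $\pi_J$. This curve is unique, for two distinct irreducible $J$-holomorphic curves in $\bft_{-}$ would meet non-negatively by positivity of intersections, contradicting $\bft_{-}^2 = -1$. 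One thus obtains a forgetful map $q \colon \calj_{-1} \to \scr S$, $J \mapsto (\pi_J, C_J)$, into the space $\scr S$ of pairs consisting of a smooth foliation of $X$ by spheres in the class $\bff$ together with a torus section in the class $\bft_{-}$ transverse to it.

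The key reduction is that $q$ is a fibration with contractible fibers. The fiber $q^{-1}(\pi,C)$ is the space of those $J \in \calj(X,\pi)$ for which $C$ is $J$-holomorphic. Exactly as in the proof of Proposition \ref{prop-dasha}, this is the section space of a bundle over $X$ whose fibers are contractible spaces of admissible linear complex structures, now subject to the additional pointwise condition along $C$ that $TC$ be $J$-invariant; this condition cuts out a subbundle whose fibers are again contractible, so the fiber of $q$ is contractible. It follows that $q$ induces a bijection $\pi_0(\calj_{-1}) \cong \pi_0(\scr S)$, and the problem is reduced to showing that every component of the purely smooth configuration space $\scr S$ contains a pair realized by an integrable structure.

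It remains to exhibit the integrable representatives and to connect to them. The surface $X_{-1}$ of \S\ref{sec-complex} is diffeomorphic to $S^2 \tilde{\times} T^2$ and carries a holomorphic section of self-intersection $-1$ in the class $\bft_{-}$, so its complex structure lies in $\calj_{-1}$; as the base elliptic curve varies over the connected space of complex structures on $T^2$, these integrable structures sweep out a connected subset of $\calj_{-1}$. Given an arbitrary $(\pi,C) \in \scr S$, I would first carry $\pi$ into the standard foliation using that $\diff_0(X)$ acts transitively on $\fol_0(X)$, and then, with the foliation fixed, deform the section $C$ within the class $\bft_{-}$ to the one underlying $X_{-1}$ by an obstruction-theoretic argument; thus every component of $\scr S$, and hence of $\calj_{-1}$, meets the integrable locus.

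The main obstacle is the middle step. Although Proposition \ref{prop-dasha} gives the contractibility of $\calj(X,\pi)$ essentially for free, promoting the forgetful map $q$ to an honest fibration and checking that imposing the holomorphic-section constraint along $C$ still leaves contractible fibers requires care: one must verify that the linear-algebra computation underlying Proposition \ref{prop-dasha} survives the extra constraint along $C$ and that this constraint varies continuously as $(\pi,C)$ moves in $\scr S$. Controlling $\pi_0(\scr S)$ — and thereby allowing for the possibility of several components of $\calj_{-1}$, which is precisely why the statement is phrased componentwise rather than as outright connectedness — is the delicate remaining input.
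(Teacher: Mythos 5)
Your plan is sound and is a genuinely different route from the paper's. The paper argues locally and constructively: given $J \in \calj_{-1}$ with its curve $C$ and ruling $\pi$ (Theorem \ref{thm-ruled}), it chooses a smooth section $C_1$ disjoint from $C$, regards $X - C_1$ as a $\cc$-bundle over $C$, and takes the $(0,1)$-part of a $\uone$-invariant connection to get a $\dbar$-operator, i.e.\ an integrable $J_1 \in \calj(X,\pi)$ agreeing with $J$ on the fibers of $\pi$ and on $TX|_{C}$; it then joins $J$ to $J_1$ using Proposition \ref{prop-dasha} (a common taming form) together with the homotopy uniqueness of tamed extensions of an almost-complex structure prescribed along the symplectic curve $C$, so the connecting path keeps $C$ holomorphic and never leaves $\calj_{-1}$. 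In your language, the paper's entire argument takes place inside a single fiber of your map $q \colon \calj_{-1} \to \scr{S}$: it produces an integrable point of that fiber and connects to it there, thereby sidestepping exactly the two points you flag as delicate, namely the local triviality of $q$ and the topology of $\scr{S}$. Your globalized version, if completed, buys more — a weak homotopy equivalence $\calj_{-1} \simeq \scr{S}$, hence an actual computation of $\pi_0(\calj_{-1})$ rather than just one integrable representative per component — at the cost of foundational work the paper never needs.

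For what it is worth, neither of your flagged steps actually fails. The constrained fibers are contractible exactly as you sketch: along $C$ the pointwise condition forces $J$ to be block-diagonal with respect to the transverse splitting $V_x \oplus T_xC$, giving a product of two contractible spaces of oriented linear complex structures on $\rr^2$, and relative obstruction theory for sections of a bundle with contractible fibers, constrained along $C$ to a subbundle with contractible fibers, gives a nonempty contractible section space (tamedness comes from Proposition \ref{prop-dasha}\,\sliip). The connectivity of the space of sections of a fixed ruling in the class $\bft_{-}$ is likewise true: since $\pi_1(S^2)=0$, two sections are homotopic over the $1$-skeleton, and the only difference class lives in $H^2(T^2;\pi_2(S^2)) \cong \zz$ and is detected by the homology class, so homologous sections are isotopic through sections. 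And for $\pi_0$-bijectivity you do not need $q$ to be an honest fibration — surjectivity of $q$ plus a parametrized choice of constrained sections (again contractibility) suffices to lift paths. So your proposal is correct in substance; the trade-off is precisely the extra rigor needed on $q$ versus the paper's one-fiber connection-theoretic construction.
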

\begin{proof}
Take a structure $J \in \calj_{-1}$, and denote by $C$ the corresponding smooth elliptic curve in the class $[C] = \bft_{-}$.
Let $\pi \colon X \to C$ be the ruling such that $J \in \calj(X,\pi)$, see \slsf{Theorem \ref{thm-ruled}}.
Apart from the section given by $C$, we now choose one more smooth section $C_1$ of $\pi$ such that $C_1$ is disjoint from $C$; the section $C_1$ need not be holomorphic, but be smooth.
We claim that there exists a unique $\cc^{*}$-action on $X$ such that
\begin{itemize}
\item[(a)] it is fiberwise, i.e. this diagram
\begin{equation}
\displaystyle{
\begin{CD}
\scrx @>{\cdot g}>> \scrx \\
@V{p}VV @VV{p}V \\
\cc @>>{\cdot g}> \cc,
\end{CD}
}
\end{equation}
commute for each $g \in \cc^{*}$,
\item[(b)] it acts on the fibers of $\pi$ by means of biholomorphisms, and
\item[(c)] it fixes both $C$ and $C_1$. 
\end{itemize}

The complement $X - C_1$ is a $\cc$-bundle with $C$ being the zero-section; we keep the notation $\pi$ for the projection $X - C_1 \to C$. This bundle inherits the $\cc^{*}$-action described above. Let $\uone$ be the unitary subgroup of $\cc^{*}$. The $(0,1)$-part of a $\uone$-invariant connection on the $\cc$-bundle $\pi \colon X-C_1 \to C$ defines a $\bar\del$-operator which associated to some holomorphic structure $J_1$ on $X-C_1$, see \slsf{Chapter 0, § 5} in \cite{Gr-Ha}.
As a complex structure, $J_1$ agrees with $J$ on the fibers of $\pi$.

To every holomorphic $\cc^{*}$-bundle one canonically associates a $\cp^1$-bundle. Hence, 
there is a unique extension $J_1$ to a complex structure $J_1 \in \calj(X,\pi)$
such that $C_1$ becomes holomorphic.

When restricted to the bundle $TX|_{C}$, $J_1$ coincides with $J$.
By \slsf{Proposition \ref{prop-dasha}} there is a symplectic form $\omega$ taming both structures $J$ and $J_1$. Given a symplectic curve, say $C$, in $X$, and an almost-complex structure, say $J$, defined along $C$ (i.e. on $TX|_{C}$) and tamed by $\omega$. There exists an $\omega$-tamed almost-complex structure on $X$ which extends the given one. Moreover, such an extension is homotopically unique. In particular, one can always construct a family $J_t$ joining $J$ and $J_1$ such that $C$ stays $J_t$-holomorphic, and the lemma is proved. \qed
\end{proof}

\smallskip%
Summarizing the results of \slsf{Lemma \ref{lem-epiclemma}} and \slsf{Lemma \ref{lem-unexpectedlemma}} we obtain
\begin{lem}
$\pi_1(\diff_0(X)) \to \pi_1(\calj_{\cala}(X))$ is epimorphic.
\end{lem}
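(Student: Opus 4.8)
The plan is to assemble the two preceding lemmas, with essentially no new geometry required. Write $\nu^{\cala} \colon \diff_0(X) \to \calj_{\cala}(X)$ for the action map $f \mapsto f_* J_0$ based at some fixed $J_0 \in \calj_{\cala}(X)$; since the inclusion $i$ satisfies $i \circ \nu^{\cala} = \nu$, the induced homomorphisms fit into
\[
\pi_1(\diff_0(X)) \xrightarrow{\,\nu^{\cala}_*\,} \pi_1(\calj_{\cala}(X)) \xrightarrow{\,i_*\,} \pi_1(\calj(X)),
\]
and it is precisely $\nu^{\cala}_*$ that I must show to be onto. (The homomorphism named in the lemma is exactly this one.)

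First I would take an arbitrary class $[J(t)] \in \pi_1(\calj_{\cala}(X))$ and push it forward to $i_*[J(t)] \in \pi_1(\calj(X))$. By \lemma{lem-epiclemma} the map $\nu_*$ is surjective, so there is a loop $f(t) \in \pi_1(\diff_0(X))$ with $\nu_*[f(t)] = i_*[J(t)]$. Setting $J_0(t) := \nu^{\cala}_*[f(t)]$, the relation $i \circ \nu^{\cala} = \nu$ gives $i_*[J_0(t)] = i_*[J(t)]$, so that $J_1(t) := [J_0(t)]^{-1}\cdot[J(t)]$ lies in $\ker i_*$. This reproduces the decomposition already noted above, namely $[J(t)] = J_0(t)\cdot J_1(t)$ with $J_0(t) \in \im \nu^{\cala}_*$ and $J_1(t) \in \ker i_*$. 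I would then invoke \lemma{lem-unexpectedlemma}, whose content is $\ker i_* \subset \im \nu^{\cala}_*$ (every loop linking the divisorial locus $\cald_{\bft_{-}}$ and dying in $\calj(X)$ is already realised by the action of $\diff_0(X)$ on $\calj_{\cala}$). Hence $J_1(t) \in \im \nu^{\cala}_*$ as well, and since $\im \nu^{\cala}_*$ is a subgroup the product $[J(t)] = J_0(t)\cdot J_1(t)$ lies in $\im \nu^{\cala}_*$. As $[J(t)]$ was arbitrary, $\nu^{\cala}_*$ is epimorphic.

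As this lemma is purely a combination of \lemma{lem-epiclemma} and \lemma{lem-unexpectedlemma}, there is no residual analytic difficulty; the only point demanding care is the bookkeeping with base points. One must keep firmly in mind that $\im \nu_*$ appearing in the decomposition really denotes the image of the $\calj_{\cala}$-valued map $\nu^{\cala}_*$, not merely of the $\calj(X)$-valued $\nu_*$. This is legitimate exactly because the linking loops supplied by \lemma{lem-unexpectedlemma} are realised by orbits that remain inside $\calj_{\cala}$, so their classes lie in $\im \nu^{\cala}_*$ and not just in $\im \nu_*$; verifying that this identification is consistent is the one step I would write out with some attention.
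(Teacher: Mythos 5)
Your proof is correct and follows exactly the paper's argument: the paper derives this lemma by combining \lemma{lem-epiclemma} (giving the decomposition $[J(t)] = J_0(t)\cdot J_1(t)$ with $J_0(t) \in \im\nu_*$ and $J_1(t) \in \ker i_*$) with \lemma{lem-unexpectedlemma} ($\ker i_* \subset \im\nu_*$), just as you do. Your extra care distinguishing $\nu^{\cala}_*$ from $\nu_*$ is a sound clarification of a point the paper leaves implicit, since the orbit loops in \lemma{lem-unexpectedlemma} are indeed constructed inside $\calj_{\cala}$.
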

Again, it is implied by diagram \ref{d-kron} that the following holds.
\begin{prop}\label{prop-2half}
$\pi_0(\symp^*(X,\omega_{\mu})) = 0$ for every $\mu \in (-1,0]$. 
\end{prop}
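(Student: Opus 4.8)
The plan is to read the statement off the homotopy-theoretic machinery of \refsubsection{sec}, extracting it from diagram \eqref{d-kron} once the relevant spaces have been identified with ones already understood. First I would recall that the McDuff map $\psi \colon \Omega(X,\omega_{\mu}) \to \calj(X,\Omega_{\mu})$ of \eqref{mcduff-map} is a homotopy equivalence, so that $\psi_{*}$ is an isomorphism on $\pi_1$. Combined with exactness of the upper row and commutativity of the square in \eqref{d-kron}, this identifies the symplectic mapping class group $\pi_0(\symp^*(X,\omega_{\mu}))$ with $\coker\bigl(\nu_{*} \colon \pi_1(\diff_0(X)) \to \pi_1(\calj(X,\Omega_{\mu}))\bigr)$.

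The second step is to replace the target of $\nu_{*}$ by a space whose fundamental group we control. By \slsf{Lemma \ref{lem-suka}}, for $\mu \in (-1,0]$ one has the equality $\calj(X,\Omega_{\mu}) = \calj_{\cala}(X)$ of spaces of almost-complex structures; hence $\pi_1(\calj(X,\Omega_{\mu})) = \pi_1(\calj_{\cala}(X))$, and under this identification $\nu_{*}$ becomes precisely the map $\pi_1(\diff_0(X)) \to \pi_1(\calj_{\cala}(X))$.

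Finally, the lemma immediately preceding this proposition asserts that this very map is an epimorphism, so its cokernel is trivial and therefore $\pi_0(\symp^*(X,\omega_{\mu})) = 0$. I expect no genuine obstacle in this last assembly, which is purely a formal diagram chase entirely parallel to the derivation of \slsf{Proposition \ref{prop-onehalf}} from \slsf{Lemma \ref{lem-epiclemma}} in the complementary range $\mu \in (0,1)$. All the real content has already been spent upstream: in establishing surjectivity of $\nu_{*}$ onto $\pi_1(\calj_{\cala})$ — via the epimorphism of \slsf{Lemma \ref{lem-epiclemma}}, the inclusion $\ker i_{*} \subset \im \nu_{*}$ of \slsf{Lemma \ref{lem-unexpectedlemma}}, and the integrability-by-deformation argument of \slsf{Lemma \ref{lem-connectinglemma}} — and in the stratification identity of \slsf{Lemma \ref{lem-suka}} that makes the substitution $\calj(X,\Omega_{\mu}) = \calj_{\cala}(X)$ legitimate.
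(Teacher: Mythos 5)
Your proposal is correct and follows the paper's own route essentially verbatim: the paper likewise derives the result from diagram \eqref{d-kron} together with the identification $\calj(X,\Omega_{\mu}) = \calj_{\cala}(X)$ of \slsf{Lemma \ref{lem-suka}} and the epimorphicity of $\pi_1(\diff_0(X)) \to \pi_1(\calj_{\cala}(X))$ obtained by combining \slsf{Lemma \ref{lem-epiclemma}} with \slsf{Lemma \ref{lem-unexpectedlemma}}. You also correctly locate where the genuine content lives (upstream, in the surjectivity lemmas) and that the final step is the same formal cokernel computation as in \slsf{Proposition \ref{prop-onehalf}}.
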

Together with \slsf{Proposition \ref{prop-onehalf}}, this statement covers what is claimed in \slsf{Theorem \ref{thm-vanish}}.

\section{Blow up once} 

\subsection{Rational $(-1)$-curves.}
Let $(Z,\omega)$ be a symplectic ruled 4-manifold diffeomorphic to $S^2 \tilde{\times} Y^2 \# \overline{\cp}^2$.
Here we study homology classes in $\sfh_2(Z;\zz)$ that can be represented by a symplectically embedded $(-1)$-sphere. Given a symplectically embedded $(-1)$-sphere $A$, it satisfies 
\begin{equation}\label{eq-excurve}
\displaystyle{
[A]^2 = -1, \quad K^{*}([A]) = 1.
}
\end{equation}
A simple computation shows that there are only two homology classes satisfying \eqref{eq-excurve}, namely, $[A] = \bfe$ and $[A] = \bff - \bfe$.

The following lemma will be used in the sequel, often without any specific reference.
\begin{lem}\label{lem-martin}
Let $(Z,\omega)$ be a symplectic ruled 4-manifold diffeomorphic to $S^2 \tilde{\times} Y^2 \# \overline{\cp}^2$.
Then for every choice of $\omega$-tamed almost-complex structure $J$, both the classes $\bfe$ and $\bff - \bfe$ are represented by smooth rational $J$-holomorphic curves.
\end{lem}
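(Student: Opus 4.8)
The plan is to obtain $\bfe$ and $\bff-\bfe$ as Gromov limits of embedded $J$-holomorphic spheres and then to force those limits to be smooth rational curves by positivity of intersections and the adjunction formula, in the same spirit as the proof of \propo{prop-smoothcurves}. Throughout write $E$ for either of the two classes. By \eqref{eq-excurve} we have $E^2=-1$ and $c_1\cdot E=1$, so the adjunction (arithmetic) genus is $p_a(E)=1+\tfrac12(E^2-c_1\cdot E)=0$ and the expected dimension of the moduli space of $J$-holomorphic spheres in class $E$ is zero. Since $E$ is represented by a symplectically embedded $(-1)$-sphere (the exceptional divisor for $\bfe$, the proper transform of a ruling fiber through the blown-up point for $\bff-\bfe$), the Gromov invariant satisfies $\gr(E)=1\neq0$; see \cite{McD-D}.

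First I would use $\gr(E)\neq0$ together with Gromov compactness. For a Baire-generic $\omega$-tamed $J$ the class $E$ carries an embedded $J$-holomorphic sphere. For an arbitrary $\omega$-tamed $J$, pick generic $J_n\to J$ with embedded spheres $S_n$ in class $E$; after passing to a subsequence, Gromov compactness produces a $J$-holomorphic cusp curve, i.e.\ a stable map whose image decomposes as $\bigcup_i C_i$ with $\sum_i m_i[C_i]=E$, the $C_i$ distinct irreducible $J$-holomorphic curves and $m_i\ge1$. As the domains stay of genus $0$, each $C_i$ is a rational curve.

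Next I would constrain the components using the ruling. By \refthm{thm-ruled} the structure $J$ is compatible with the ruling $\pi$, so the fiber class $\bff$ is represented by $J$-holomorphic spheres that foliate $Z$ away from finitely many singular fibers. Positivity of intersections gives $[C_i]\cdot\bff\ge0$ for every $i$, while $\sum_i m_i\,[C_i]\cdot\bff=E\cdot\bff=0$; hence $[C_i]\cdot\bff=0$ for all $i$, so each component is vertical and $[C_i]=a_i\bff+c_i\bfe$ for integers $a_i,c_i$. Since the Gromov limit is connected and distinct fibers are disjoint, the whole cusp curve lies in a single fiber $F$; as $E$ is not a multiple of $\bff$, this $F$ must be one of the finitely many singular fibers, and the $C_i$ are among its irreducible components.

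It remains to identify these components, which is the decisive and hardest step. Adjunction gives $p_a([C_i])=1-a_i-\tfrac12 c_i(c_i+1)\ge0$ (each $C_i$ being rational), and positivity of intersections gives $[C_i]\cdot[C_j]=-c_ic_j\ge0$ for $i\neq j$; together with positivity of the $\omega$-areas inside a fiber of area $\omega(\bff)$, a short case analysis --- entirely parallel to cases (a)--(c) in the proof of \propo{prop-smoothcurves} --- shows that the only admissible irreducible vertical rational classes are $\bff,\bfe,\bff-\bfe$, and that the sole combination with positive multiplicities summing to $E$ is the single reduced class $E$ itself. Since $p_a(E)=0$, that component is a smoothly embedded rational curve, which proves the lemma. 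I expect this identification to be the main obstacle, since it is precisely here that the smallness $E^2=-1$ and the rationality of the fibers prevent the Gromov limit from splitting off extra vertical bubbles.
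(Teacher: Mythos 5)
Your overall strategy --- nonvanishing of the Gromov invariant plus Gromov compactness, then killing degenerate limits by adjunction, positivity of intersections, and area --- is exactly the paper's, but as written your argument has two genuine defects. First, the appeal to \refthm{thm-ruled} is illegitimate for $Z$: that theorem concerns \emph{geometrically} ruled (minimal) surfaces, and the blow-up $S^2\tilde\times Y \,\#\, \overline{\cp}^2$ admits no smooth $J$-holomorphic $S^2$-ruling for an arbitrary tamed $J$ --- the fiber through the exceptional curves necessarily breaks. The correct substitute is the \emph{singular} ruling of \lemma{lem-ruled2}, but the paper proves that lemma \emph{from} the present one, so invoking any such ruling here is circular. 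Fortunately the step is also unnecessary: every component $C_i$ of the Gromov limit is rational, and spherical classes in $\sfh_2(Z;\zz)$ are generated by $\bff$ and $\bfe$, so $[C_i]\in\zz\bff\oplus\zz\bfe$ with no verticality or ``single singular fiber'' discussion at all; this is precisely what the paper uses, and your localization step can simply be deleted.

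Second, and more seriously, the ``short case analysis'' you defer is the entire mathematical content of the lemma, and you flag it as the main obstacle without carrying it out; that is a gap, though a reparable one. Here is how the paper closes it for $E=\bff-\bfe$: since $c_1(\bff-\bfe)=1$, some component $A_1$ of the cusp-curve $A=\sum m_iA_i$ has $c_1([A_1])\ge 1$; writing $[A_1]=p\bff-q\bfe$ gives $[A_1]^2=-q^2\le 0$, and $q=0$ is excluded because a class $p\bff$ with $p\ge1$ has area at least $\omega(\bff)$, exceeding the total area $\omega(\bff-\bfe)$ (as $\omega(\bfe)>0$); adjunction then forces $[A_1]=\bfe$. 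Any further component must meet $A_1$ positively, two such would meet each other negatively, so there are at most two components, and the residue $m_2[A_2]=\bff-(m_1+1)\bfe$ is primitive (hence $m_2=1$) and has arithmetic genus $-m_1(m_1+1)/2<0$, which no rational curve can have. (Equivalently, in your notation: $p_a(a\bff+b\bfe)\ge 0$ gives $b^2+b+2a\le 2$, and positivity of area leaves only $\bff$, $\bfe$, $\bff-\bfe$ as irreducible rational classes, after which $\alpha\bff+\beta\bfe+\gamma(\bff-\bfe)=E$ has no solution in nonnegative multiplicities except the reduced $E$ itself.) One smaller point: $\gr(E)\ne 0$ does not follow merely from exhibiting one embedded representative for one $J$; as the paper does for $\gr(\bff)$ in \lemma{lem-ruled2}, one should invoke Taubes/Seiberg--Witten theory for exceptional classes, cf.\ \cite{McD-Sa-2}.
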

\begin{proof}
Given an arbitrary $\omega$-tamed almost-complex structure $J$, the exceptional class $\bff - \bfe$ is represented by either a smooth $J$-holomorphic curve or by a $J$-holomorphic cusp-curve $A$ of the form $A = \sum m_i A_i$ where each $A_i$ stands for a rational curve occuring with the multiplicity $m_i \geq 1$. Clearly, we have
\begin{equation}\label{eq-areadec}
\displaystyle{
0 < \int_{A_i} \omega < \int_{A} \omega.
}
\end{equation}
Because $K^{*}(\bff - \bfe) = 1$, there exists at least one irreducible component of the curve $A$, say $A_1$, with $K^{*}([A_1]) \geq 1$.

Note that spherical homology classes in $\sfh_2(Z;\zz)$ are generated by $\bff$ and $\bfe$. Hence, we have $[A_1] = p \bff - q \bfe$, which implies in particular that $[A_1]^2 = -q^2 \leq 0$, with equality iff $[A_1] = p \bff$. But the latter is prohibited by \eqref{eq-areadec} because 
\[
\displaystyle{
\int_{\bfe} \omega > 0                                      .
}
\]
Therefore, we have $[A_1]^2 \leq -1$. Further, one may use the adjunction formula to obtain that $A_1$ is a smooth rational curve with $[A_1]^2 = -1$ and $K^{*}(A_1) = 1$. Note that it is not possible for $A_1$ to be in the class $\bff - \bfe$ because of \eqref{eq-areadec}. Hence, we have $[A_1] =  \bfe$. 

Take another irreducible component, say $A_2$. If $A_2$ does not intersect $A_1$, then $[A_2] = p \bff$, which contradicts \eqref{eq-areadec}. Thus $A_2$ intersects $A_1$, positively. Hence, $[A_2] = p \bff - q \bfe$ for $q$ positive. The same argument works for the other irreducible components $A_2, A_3, \ldots$ of the curve $A$. But note that $[A_2]\cdot [A_3] < 0$, and hence there are no other components of $A$, except $A_1$ and $A_2$. We thus have $m_2 [A_2] = \bff - (m_1 + 1) \bfe$ for $m_1, m_2 \geq 1$. The class $\bff - (m_1 + 1) \bfe$ is primitive, and hence $m_2 = 1$. Further, this class cannot be represented by a rational curve, which can be easily checked using the adjunction formula. We thus proved the lemma for the class $\bff - \bfe$; the case of $\bfe$ is analogous. \qed
\end{proof}

This lemma leads to the following generalization of \slsf{Theorem \ref{thm-ruled}} for ruled but not geometrically ruled symplectic 4-manifolds.
\begin{lem}\label{lem-ruled2}
Let $(Z,\omega)$ be a symplectic ruled 4-manifold diffeomorphic to $S^2 \tilde{\times} Y^2 \# \overline{\cp}^2$, and let $J$ be an $\omega$-tamed almost-complex structure. Then $Z$ admits a \slsf{singular ruling} given by a proper projection $\pi : Z \to Y$ onto $Y$ such that

\sli there is a singular value $y^{*} \in Y$ such that $\pi$ is a spherical fiber bundle over $Y - y^{*}$, and each fiber $\pi^{-1}(y)$, $y \in Y - y^{*}$, is a $J$-holomorphic smooth rational curve in the class $\bff$;

\slii the fiber $\pi^{-1}(y^{*})$ consists of the two exceptional $J$-holomorphic smooth rational curves in the classes $\bff-\bfe$ and $\bfe$.
\end{lem}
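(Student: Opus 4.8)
The plan is to build the singular ruling directly from the two exceptional curves supplied by \lemma{lem-martin}, adapting the foliation argument that underlies \refthm{thm-ruled}. By \lemma{lem-martin} I would fix smooth rational $J$-holomorphic curves $E$ and $F_{0}$ in the classes $\bfe$ and $\bff-\bfe$. Since $\bfe\cdot(\bff-\bfe)=1$ and both are irreducible and distinct, positivity of intersections forces them to meet transversally at a single point, so that $Q:=E\cup F_{0}$ is a reducible $J$-holomorphic curve in the fiber class $\bff$. Moreover, because $\bfe^{2}=(\bff-\bfe)^{2}=-1$, each class has a \emph{unique} irreducible $J$-holomorphic representative: two distinct ones would intersect nonnegatively, contradicting the negative self-intersection. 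Hence $E$, $F_{0}$, and thus $Q$, are pinned down by $J$.

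Next I would handle the smooth fibers. The class $\bff$ satisfies $\bff^{2}=0$ and $K^{*}(\bff)=2$, so the adjunction formula shows that every irreducible $J$-holomorphic curve in this class is a smooth rational curve, and the unparametrized moduli space has expected real dimension $2$, matching the dimension of the base. For a point $p\in Z\setminus Q$ I would produce an irreducible smooth $J$-sphere in class $\bff$ through $p$ by the standard existence argument for fibers of a ruled surface, namely the non-vanishing of the Gromov invariant of the fiber class together with Gromov's compactness theorem, exactly as in \refthm{thm-ruled} and \propo{prop-smoothcurves}. A limiting cusp-curve has rational components in classes of positive symplectic area summing to $\bff$; the homological bookkeeping of \lemma{lem-martin} shows the only such splitting is $\bff=\bfe+(\bff-\bfe)$, i.e. into $Q$. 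Thus a degeneration can occur only over a point of $Q$, and for $p\notin Q$ the curve stays irreducible and smooth. Uniqueness and disjointness are then immediate from positivity: two distinct irreducible curves in class $\bff$ have intersection number $\bff^{2}=0$, hence are disjoint, so exactly one smooth fiber passes through each $p\in Z\setminus Q$.

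I would then assemble these curves into the projection. The smooth fibers foliate $Z\setminus Q$, while $Q$, being the only reducible representative of $\bff$ and disjoint from every smooth fiber, accounts for the single degenerate fiber; sending each point to the fiber through it defines a proper map $\pi\colon Z\to Y$ onto the leaf space. That the leaf space is a smooth surface diffeomorphic to $Y$ and that $\pi$ restricts to a locally trivial $S^{2}$-bundle over $Y-y^{*}$ follows, just as in \refthm{thm-ruled}, from the smooth dependence of the fibers on $p$ together with the taming statement of \propo{prop-dasha}. Over the remaining value $y^{*}$ the fiber is $Q=E\cup F_{0}$, which has components in the classes $\bfe$ and $\bff-\bfe$. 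Parts \sli and \slii are then read off directly.

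The main obstacle is the global step: upgrading pointwise existence and uniqueness of fibers into a genuine proper fibration over a surface identified with $Y$, and in particular ruling out a second singular fiber so that the family degenerates over exactly one value $y^{*}$. This is precisely where the uniqueness of $E$ and $F_{0}$ (coming from the negativity of their self-intersections) is essential, as it confines the reducible fiber to a single homological configuration occurring over a single base point. Establishing that the two exceptional spheres fit into the total family as a nodal degeneration of the nearby smooth fibers, and that $\pi$ is smooth near $y^{*}$, relies on the gluing and compactness analysis of \cite{Iv-Sh-1}, which I would invoke rather than reprove.
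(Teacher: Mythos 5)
Your proposal is correct and takes essentially the same route as the paper's own proof: exceptional curves $E$, $E^{\prime}$ in classes $\bfe$ and $\bff-\bfe$ from \lemma{lem-martin}, existence of a fiber through every point via $\gr(\bff)\neq 0$ (the paper cites Seiberg--Witten theory) combined with Gromov compactness, uniqueness by positivity of intersections, and the homological bookkeeping of \lemma{lem-martin} to show $E \cup E^{\prime}$ is the only degenerate representative of $\bff$. The additional detail you provide on assembling the fibers into a proper projection over the leaf space is precisely what the paper leaves implicit by appealing to \refthm{thm-ruled}.
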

\begin{proof}
It follows from \lemma{lem-martin} that $Z$ admits $J$-holomorphic $(-1)$-curves $E$ and $E^{\prime}$ in the classes
$\bfe$ and $\bff - \bfe$, respectively.
We have to show that for each point $p \in Z$, except for those on $E$ and $E^{\prime}$, 
there exists a smooth $J$-holomorphic sphere in the class $\bff$ that passes through $p$.
Such a sphere would necessarily be unique due to positivity of intersections.
To get such a curve for a generic (by Gromov compactness, for every) point $p \in Z$ it suffices to show $\gr(\bff) \neq 0$. But this follows from the Seiberg-Witten theory, see \cite{McD-Sa-2}.

Having a $J$-holomorphic curve passing through $p \in Z$, we have to prove it is smooth. 
Along the same lines as \lemma{lem-martin}, one shows that the only non-smooth $J$-holomorphic curve in the class $\bff$ is $E \cup E^{\prime}$.  \qed
\end{proof}

\subsection{Straight structures.}\label{sec-straight} Let $Z \cong S^2 \tilde{\times} T^2 \# \overline{\cp}^2$ be a complex ruled surface, and let $E$ be a smooth rational $(-1)$-curve in $\bfe \in \sfh_2(Z;\zz)$. The blow-down of $E$ from $Z$, which is a non-spin geometrically ruled genus one surface, will be denoted by $X$.
The surface $Z$ is said to be a \slsf{type $\cala$ surface} if $X$ is biholomorphic to the surface $X_{\cala}$, see \refsubsection{sec-complex}.

Let $p \in X$ be the image of $E$ under the contraction map. Recall that $X_{A}$ contains the triple of bisections, which are smooth elliptic curves in the class $\bfb \in \sfh_2(X;\zz)$. The surface $Z$ is called  \slsf{straight type $\cala$ surface} if there is no bisection passing through $p$ in $X$. In other words, a straight type $\cala$ surface contains a triple of smooth curves in the homology class $\bfb$, while a non-straight type $\cala$ surface contains a smooth elliptic $(-1)$-curve in the class $\bfb - \bfe \in \sfh_2(Z;\zz)$. We remark that it follows from \refthm{thm-fibering} that straight type $\cala$ surfaces can be characterized as those for which there exists a smooth elliptic $(-1)$-curve in the homology class $2\bfb-\bfe \in \sfh_2(Z;\zz)$.

Let $\pi$ be the ruling of $X$, and let $S$ be the fiber of $\pi$ that passes through $p$.
When $Z$ is type $\cala$, there are three bisections $B_i \subset X$, each of which intersects $S$ at precisely two distinct points. The following result was established in \refsubsection{sec-complex}, see the construction of Suwa's model.
\begin{lem}
There exists a complex coordinate $s$ on $S$ such that the intersection points $B_i \cap S$ are as follows:
\begin{equation}\label{eq-bisectionpoints}
B_1 \cap S = \left\{ 0, \infty \right\},\quad B_2 \cap S = \left\{ -1, 1 \right\},\quad B_3 \cap S = \left\{ -i, i \right\}. 
\end{equation}

\end{lem}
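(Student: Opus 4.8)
The plan is to read the configuration directly off Suwa's model of $X_{\cala}$ from \refsubsection{sec-complex}, in which $X_{\cala} = (\cp^1 \times Y)/\calg$ with $\calg \cong \zz_2 \oplus \zz_2$ generated by $(z,u)\mapsto(-z,u+\tau_1/2)$ and $(z,u)\mapsto(1/z,u+\tau_2/2)$. First I would record that $\calg$ acts freely on $Y$, since translation by a half-period has no fixed point on $Y$; consequently, for each representative $u_0$ the map $z\mapsto[(z,u_0)]$ is a biholomorphism from $\cp^1$ onto the ruling fibre $S=\pi^{-1}([u_0])$. Thus $s:=z$ is a genuine complex coordinate on $S$ (the fibre through $p$ is a smooth $\cp^1$, as the ruling is geometrically ruled, so this applies verbatim to it). Replacing $u_0$ by one of $u_0+\tau_1/2$, $u_0+\tau_2/2$, $u_0+\tau_1/2+\tau_2/2$ alters the coordinate only through the induced $\calg$-action on the fibre, namely the Klein four-group $\{z\mapsto z,\ z\mapsto -z,\ z\mapsto 1/z,\ z\mapsto -1/z\}$.

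Next I would identify the three bisections $B_i\subset X$ with the reductions of the three multiple fibres of the elliptic pencil $f(z,u)=\tfrac12\bigl(z^2+z^{-2}\bigr)$ of \refthm{thm-fibering}; these are the fibres of $f\colon\cp^1\to\cp^1$ over its three critical values. A one-line computation then settles everything: the poles give $f^{-1}(\infty)=\{0,\infty\}$, while $(z^2-1)^2=0$ and $(z^2+1)^2=0$ give $f^{-1}(1)=\{1,-1\}$ and $f^{-1}(-1)=\{i,-i\}$. Because $f$ depends only on the fibre coordinate, intersecting a bisection with $S$ yields
\[
B\cap S=\{\,z\in\cp^1 : f(z)=c\,\}
\]
for the corresponding critical value $c$, so the reduced bisections meet $S$ in the three pairs $\{0,\infty\}$, $\{1,-1\}$, $\{i,-i\}$, each consisting of two distinct points. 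Labelling $B_1,B_2,B_3$ as the bisections over $c=\infty,\,1,\,-1$ then reproduces \eqref{eq-bisectionpoints} with $s=z$.

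Finally I would verify that the conclusion is independent of all choices. The residual coordinate ambiguity is exactly the Klein four-group above, and each of its three involutions permutes the three pairs $\{0,\infty\}$, $\{1,-1\}$, $\{i,-i\}$ while fixing each of them setwise; likewise the complex-torus action $\mathcal{T}\cong Y$ carries any fibre $S$ to any other while preserving $f$, so the choice of fibre (equivalently of $p$) is immaterial. I do not expect a real obstacle here: the only points requiring care are the freeness of the $\calg$-action, which is what legitimises $s=z$ as a coordinate on $S$, and the routine bookkeeping matching each critical value of $f$ to its pair of preimages; the rest is the explicit computation in Suwa's model.
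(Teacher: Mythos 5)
Your proof is correct and follows exactly the route the paper intends: the paper gives no separate argument for this lemma, merely citing the construction of Suwa's model in \S\ref{sec-complex}, where the bisections arise as the reduced multiple fibers of $f(z,u)=\tfrac12\bigl(z^2+z^{-2}\bigr)$ over the critical values $\infty,1,-1$, and your computation $f^{-1}(\infty)=\{0,\infty\}$, $f^{-1}(1)=\{\pm1\}$, $f^{-1}(-1)=\{\pm i\}$ is precisely the content being invoked. Your added verifications --- freeness of the $\calg$-action legitimising $s=z$ as a fibre coordinate, and invariance of the three pairs under the residual Klein four-group $\{z,-z,1/z,-1/z\}$ and the $\mathcal{T}$-action --- are details the paper leaves implicit, and they check out.
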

We then claim
\begin{lem}\label{lem-familycp}
There exists a complex-analytic family $\scrz \to \cp^1$ of type $\cala$ surfaces $Z_s$ parametrized by $s \in \cp^1$.
When $s$ equals one of the exceptional values
\[
\left\{ 0, \infty \right\},\quad \left\{ -1, 1 \right\},\quad \left\{ -i, i \right\},
\]
the surface $Z_s$ is not a straight type $\cala$ surface, while for other parameter values, $Z_s$ is straight type $\cala$.
\end{lem}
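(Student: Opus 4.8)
The plan is to realize $\scrz\to\cp^1$ as a single blow-up of the product $X\times\cp^1$, where $X\cong X_{\cala}$ is the fixed blow-down of the type $\cala$ surfaces in question and $S$ is the distinguished fiber of the ruling $\pi$. Using the coordinate $s$ on $S\cong\cp^1$ furnished by the preceding lemma, I identify $\cp^1$ with $S$ and, for each $s$, write $p_s\in S$ for the point with coordinate $s$. Inside $X\times\cp^1$ I consider the curve
\[
\Gamma := \{\,(p_s,s)\ :\ s\in\cp^1\,\},
\]
which is the graph of the inclusion $S\hookrightarrow X$ and hence a smooth section of the second projection $X\times\cp^1\to\cp^1$. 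I then set $\scrz:=\text{Bl}_{\Gamma}(X\times\cp^1)$ and let $p\colon\scrz\to\cp^1$ be the composite of the blow-down with the projection to $\cp^1$.

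\textbf{Fibers are type $\cala$ surfaces.} The first thing to check is that $p$ is the sought family. Since $\Gamma$ is a section, it is transverse to the fibers of $X\times\cp^1\to\cp^1$ and meets each fiber $X\times\{s\}$ in the single point $(p_s,s)$; blowing up a product along such a section restricts on each fiber to the blow-up at that one point. Thus $Z_s:=p^{-1}(s)$ is canonically the blow-up $\text{Bl}_{p_s}X$ of $X$ at $p_s$, with exceptional curve $E_s$ in the class $\bfe$. Contracting $E_s$ returns $X\cong X_{\cala}$, so by definition every $Z_s$ is a type $\cala$ surface; as $\scrz$ is a smooth complex $3$-fold and $p$ is holomorphic with smooth equidimensional fibers, $p\colon\scrz\to\cp^1$ is a genuine complex-analytic family.

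\textbf{Straightness bookkeeping.} It remains to decide, for each $s$, whether $Z_s$ is straight. By definition $Z_s$ fails to be straight precisely when some bisection of $X$ passes through the blown-up point $p_s$; by \refthm{thm-fibering} there are exactly three such bisections $B_1,B_2,B_3$. Since $p_s$ lies on $S$, a bisection $B_i$ passes through $p_s$ if and only if $p_s\in B_i\cap S$, and by \eqref{eq-bisectionpoints} this occurs exactly for the six values grouped as $\{0,\infty\}$, $\{-1,1\}$, $\{-i,i\}$. Hence $Z_s$ is non-straight for $s$ in this exceptional set and straight type $\cala$ for every other $s\in\cp^1$, which is the assertion of the lemma.

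\textbf{Main point.} There is no deep obstacle here: the construction combines a standard blow-up of a product along a section with the explicit intersection data already recorded in \eqref{eq-bisectionpoints}. The only step meriting care is the verification that $\text{Bl}_{\Gamma}(X\times\cp^1)$ restricts fiberwise to the pointwise blow-ups $\text{Bl}_{p_s}X$, so that $\scrz\to\cp^1$ is a true complex-analytic family of type $\cala$ surfaces rather than a mere pointwise collection; this is exactly the transversality of the section $\Gamma$ to the fibers noted above.
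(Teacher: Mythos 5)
Your proposal is correct and follows essentially the same route as the paper: the paper also constructs $\scrz$ as the blow-up of $X\times\cp^1$ along the graph of the inclusion of the distinguished fiber (phrased there as the diagonal $S$ in $F\times\cp^1$), with the exceptional parameter values read off from \eqref{eq-bisectionpoints}. You merely make explicit two points the paper leaves implicit — that blowing up along a section restricts fiberwise to the pointwise blow-ups $\text{Bl}_{p_s}X$, and the straightness bookkeeping via the bisection intersections — which is a welcome amount of added care.
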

\begin{proof}
Pick a fiber $F$ of the ruling of $X \cong X_{\cala}$.
Consider the complex submanifold $F \times \cp^1 \subset X \times \cp^1$, and denote by $S$ the diagonal in $F \times \cp^1$. 
We construct $\scrz$ as the blow-up of $X \times \cp^1$ along $S$. The $3$-fold $\scrz$ forms the complex-analytic family $\scrz \to S$ that was claimed to exist in the lemma. \qed 
\end{proof}

\medskip%
The notion of the straight type $\cala$ complex structure can be generalized to almost-complex geometry as follows. Choose a tamed almost-complex structure $J \in \calj(Z)$. We will call $J$ straight type $\cala$, or simply \slsf{straight}, if each $J$-holomorphic representative in the class $\bfb \in \sfh_2(Z;\zz)$ is smooth. Clearly, the space of straight structures $\calj_{\text{st}}(Z)$ is an open dense submanifold in $\calj(Z)$. Instead of $\calj(Z)$ or $\calj_{\text{st}}(Z)$ we write $\calj$ and $\calj_{\text{st}}$ for short. This definition of straightness is motivated by the following lemma the proof of which is left to the reader because it is similar to the proof of \slsf{Proposition \ref{prop-smoothcurves}}, (but the modified version of \slsf{Theorem \ref{thm-ruled}} given by \slsf{Lemma \ref{lem-ruled2}} should be used). 

Let $\mib{s} \colon [0,1] \to S$ be a loop in $S$, and let 
$Z(t) \to \mib{s}(t)$ be the restriction of $\scrz \to S$ to $\mib{s}(t)$. Because $\mib{s}(t)$ is contractible inside the sphere $S$, we can think of $Z(t)$ as a family of type $\cala$ complex structures on $Z$. Each structure $J(t)$ is straight iff $\mib{s}(t)$ 
does not pass through any of points \eqref{eq-bisectionpoints}.
The following choice of $\mib{s}(t)$ will be used in \refsubsection{sec-cocycle}
\begin{equation}\label{refereemustdie}
\mib{s}(t) = \varepsilon e^{2 \pi i t }.
\end{equation}
\begin{lem}\label{lem-straightness}
Let $(Z,\omega)$ be a symplectic ruled 4-manifold diffeomorphic to $S^2 \tilde{\times} T^2 \# \overline{\cp}^2$, and let $J$ be an $\omega$-tamed almost-complex structure. Then every $J$-holomorphic representative in the class $\bfb$ is either irreducible smooth or contains a smooth component in one of the classes $\bfb - \bfe$, $\bft_{+}-k\bff$, $k > 0$. 
\end{lem}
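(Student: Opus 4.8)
The plan is to run the argument of \propo{prop-smoothcurves} on the blown-up surface, replacing the honest ruling of \slsf{Theorem \ref{thm-ruled}} by the singular ruling supplied by \lemma{lem-ruled2}. Fix the $\omega$-tamed $J$ and the associated projection $\pi\colon Z\to T^2$, whose generic fibre is a smooth $J$-holomorphic sphere in $\bff$ and whose single reducible fibre is $E\cup E'$ with $[E]=\bfe$, $[E']=\bff-\bfe$. Record the numerics on $\sfh_2(Z;\zz)=\zz\bft_{+}\oplus\zz\bft_{-}\oplus\zz\bfe$: with $K^{*}=\bfb-\bfe$ one has $\bfb^{2}=0=K^{*}\!\cdot\bfb$, $\bfb\cdot\bff=2$, $K^{*}\!\cdot\bff=2$ and $K^{*}\!\cdot\bfe=K^{*}\!\cdot(\bff-\bfe)=1$. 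Let $A=\sum_i m_iA_i$ be a $J$-holomorphic representative of $\bfb$. If $A$ is irreducible it is simple, since $\bfb$ is primitive, and it cannot have geometric genus $0$: the image of the Hurewicz map $\pi_2(Z)\to\sfh_2(Z;\zz)$ is $\langle\bff,\bfe\rangle$ and $\bfb=\bft_{+}+\bft_{-}\notin\langle\bff,\bfe\rangle$. As the adjunction inequality also gives arithmetic genus $\le 1$, both genera equal $1$ and $A$ is a smoothly embedded torus. This is the first alternative.

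Now suppose $A$ is reducible. Intersecting with a generic fibre gives $\sum_i m_i([A_i]\cdot\bff)=2$, and by \lemma{lem-ruled2} the only vertical irreducible curves are $\bff$, $E$ and $E'$, so the dominant components meet the fibre in exactly two points. If $A$ has no vertical component it must consist of two sections $\sigma_1,\sigma_2$; writing $[\sigma_i]=\bft_{+}-k_i\bff$ (the $\bfe$-free case treated below), the relation $[\sigma_1]+[\sigma_2]=\bfb$ forces $k_1+k_2=1$, whence some $k_i\ge 1$, and that $\sigma_i$ lies in $\bft_{+}-k_i\bff$ and is smooth by adjunction and non-sphericity. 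If instead $A$ has a vertical component, pairing the decomposition with $K^{*}$ and using $K^{*}\!\cdot\bfb=0$ together with $K^{*}\!\cdot[\text{vertical}]\ge1$ produces a dominant component $B$ with $K^{*}\!\cdot[B]<0$. Computing $g_a([B])=1+\tfrac12\bigl([B]^2-K^{*}\!\cdot[B]\bigr)$ and imposing $g_a([B])\ge 0$ leaves only two possibilities: either the sole bisection satisfies $[B]=\bfb-\bfe$, the vertical part being exactly $E$, or a section satisfies $[B]=\bft_{+}-k\bff$ with $k>0$. In either case $B$ is a non-spherical class of arithmetic genus $1$, hence a smooth torus, giving a component of the required type.

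The delicate point, and the step I expect to be the real obstacle, is the bookkeeping of the exceptional class $\bfe$ among the dominant components. A priori a horizontal component could carry a class such as $\bft_{-}-\bfe$ or $\bft_{+}-k\bff+c\bfe$ with $c\neq 0$, which adjunction alone would declare smooth of genus $1$ yet which is \emph{not} of target type; its existence would break the lemma, so such components must be excluded. The mechanism is that the total $\bfe$-coefficient of $A$ is zero, so any $\bfe$-content of a dominant component is balanced by copies of $E$ (class $\bfe$) or $E'$ (class $\bff-\bfe$) among the vertical components; positivity of intersection between that dominant component and $E$, $E'$ then constrains the sign of $c$, while non-sphericity of $\bfb$ and of $\bfb-\bfe$, the adjunction bound, and the fact that the reducible $A$ occurs as a Gromov limit of the smooth elliptic curves in $\bfb$ (so that $A$ is a connected cusp-curve of arithmetic genus one, exactly as the representative in \propo{prop-smoothcurves} is produced) together eliminate the multiply covered and $\bfe$-twisted configurations. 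Once these are ruled out, the surviving smooth component is precisely one of $\bfb-\bfe$ or $\bft_{+}-k\bff$, $k>0$, which completes the proof.
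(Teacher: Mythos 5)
Your overall route is exactly the one the paper intends: the paper gives no written proof of this lemma, stating only that the reader should rerun the argument of \propo{prop-smoothcurves} with the singular ruling of \lemma{lem-ruled2} in place of \refthm{thm-ruled}, and your first two paragraphs carry out precisely that skeleton (irreducible case via primitivity of $\bfb$, non-sphericity since $\bfb\notin\langle\bff,\bfe\rangle$, and adjunction; reducible case via fibre degree $\bfb\cdot\bff=2$ and the balance $K^{*}\cdot\bfb=0$ against $K^{*}\ge 1$ on vertical classes). However, the proposal is not a complete proof, and the hole is the one you yourself flag in your third paragraph. Your claim that ``imposing $g_a([B])\ge 0$ leaves only two possibilities'' is false: every section class $a\,\bft_{+}+b\,\bft_{-}-\bfe$ with $a+b=1$ has $g_a=1$ by adjunction (e.g.\ $\bft_{-}-\bfe$, with $K^{*}\cdot[B]=-2<0$), so adjunction plus $K^{*}$-negativity does not pin $[B]$ to the target list. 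You then name possible mechanisms without running any of them, and one of them is illegitimate: you cannot assume $A$ arises as a Gromov limit of smooth $\bfb$-tori, because the lemma quantifies over \emph{every} $J$-holomorphic representative --- that universal form is precisely what is needed to define $\calj_{\text{st}}$ and the stratification that follows the lemma.

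The exclusion can actually be closed, but by different means than you cite. By \lemma{lem-martin} the classes $\bfe$ and $\bff-\bfe$ carry irreducible $J$-curves $E$, $E'$ for \emph{every} tamed $J$, so positivity of intersections with $E$ and $E'$ forces the $\bfe$-coefficient $c$ of any other irreducible component to satisfy $-1\le c\le 0$ for a section and $-2\le c\le 0$ for a bisection. Then homological bookkeeping does the rest: vertical classes are $p\,\bff+q\,\bfe+r(\bff-\bfe)$ with $p,q,r\ge 0$, which for two horizontal sections $\sigma_0,\sigma_1$ gives $p+r=b_0+b_1-1\ge 0$; if both have $c=-1$ this clashes with pairwise positivity, since $[\sigma_0]\cdot[\sigma_1]=-b_0-b_1\ge 0$ forces $b_0+b_1\le 0$; in the mixed case $c_0=-1$, $c_1=0$ with $\sigma_1$ non-target one gets $b_0+b_1=1$, $[\sigma_0]\cdot[\sigma_1]=0$ and vertical part $\bfe$ only, so the configuration is disconnected and is not a cusp-curve; and the bisection cases reduce by $g_a\ge 0$ and non-sphericity to $\bfb-\bfe$ alone. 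Note finally that once multiplicities are allowed (as they are in \lemma{lem-martin}), the dichotomy can literally fail: on the blow-up of $X_{-1}$ at a point of the negative section, the connected cycle $2(\bft_{-}-\bfe)+\bff+2\,\bfe$ represents $\bfb$ with no component in a target class, so a careful write-up must either take representatives with reduced horizontal part, or observe that such $J$ lie in strata of real codimension at least four, which the paper in any case discards as irrelevant to $\pi_1(\calj)$. Your instinct that the $\bfe$-bookkeeping is ``the real obstacle'' was therefore right; the proposal as written leaves it open.
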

Similarly to \slsf{Proposition \ref{prop-smoothcurves}} this lemma leads to a natural stratification of the space $\calj$ of tamed almost-complex structures. Namely, this space can be presented as the disjoint union
\[
\calj = \calj_{\text{st}} \bigsqcup \cald_{\bft_{-}} \bigsqcup \cald_{\bfb - \bfe} + \ldots,
\]
where $\cald_{\bft_{-}}$ and $\cald_{\bfb - \bfe}$, which are submanifolds of real codimension 2 in $\calj$, are the elliptic divisorial locus for respectively the classes $\bft_{-}$ and $\bfb-\bfe$. Here we omitted the terms of real codimension greater than 2, because they do not affect the fundamental group of $\calj$.

Coming to the symplectic side of straightness, we claim that if a symplectic form $\omega$ on $Z$ satisfies the \slsf{period conditions}
\[
\displaystyle{
\int_{\bft_{-}} \omega < 0, \quad \int_{\bfb - \bfe} \omega < 0,
}
\]
then $\calj(Z,\omega) \subset \calj_{\text{st}}$. Moreover, a somewhat inverse statement holds, at least for integrable structures.
\begin{lem}[cf. \refsubsection{Ell-tw-1}]\label{lem-kahst}
Every complex straight type $\cala$ structure is tamed by a symplectic form satisfying the period conditions.
Moreover, every compact family of straight type $\cala$ structures is tamed by a family of cohomologous forms satisfying the period conditions.
\end{lem}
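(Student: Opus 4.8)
The plan is to start from an arbitrary taming form and to push both periods below zero by combining one application of inflation (\refthm{thm-Infla}) with one of deflation (\refthm{thm-Defla}), using the two $J$-holomorphic curves that every straight type $\cala$ structure is guaranteed to carry. By the definition of $\calj_{\text{st}}$ the class $\bfb$ is represented by a smooth embedded $J$-holomorphic bisection, and since $\bfb^2 = 0$ it is eligible for inflation. By the characterization of straightness recorded in \refsubsection{sec-straight} (itself a consequence of \refthm{thm-fibering}), the class $2\bfb - \bfe$ is represented by a smooth embedded elliptic $(-1)$-curve, so it is eligible for deflation with $m = 1$. A taming form for $J$ exists because $J \in \calj(Z)$.

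First I would record the relevant intersection numbers. One computes $\bft_{-} \cdot \bfb = -1$ and $(\bfb - \bfe) \cdot \bfb = 0$, while $\bft_{-} \cdot (2\bfb - \bfe) = -2$ and $(\bfb - \bfe) \cdot (2\bfb - \bfe) = -1$. Inflating along $\bfb$ by an amount $s_1 \ge 0$ adds $s_1\,\pd(\bfb)$ to the cohomology class, hence lowers $\int_{\bft_{-}}\omega$ by $s_1$ while leaving both $\int_{\bfb - \bfe}\omega$ and $\int_{2\bfb - \bfe}\omega$ untouched; since $\bfb^2 = 0$ the parameter $s_1$ is unbounded. Deflating along $2\bfb - \bfe$ by an admissible amount $s$, with $0 \le s < \int_{2\bfb - \bfe}\omega$, adds $s\,\pd(2\bfb - \bfe)$ and lowers $\int_{\bfb - \bfe}\omega$ by $s$ and $\int_{\bft_{-}}\omega$ by $2s$. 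Choosing $s$ in the interval $(\int_{\bfb - \bfe}\omega,\ \int_{2\bfb - \bfe}\omega)$, which is nonempty because its length equals $\int_{\bfb}\omega > 0$, drives $\int_{\bfb - \bfe}\omega$ below zero; then taking $s_1$ large enough drives $\int_{\bft_{-}}\omega$ below zero as well. Both operations keep $J$ tamed, so the resulting form proves the first assertion.

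For the second assertion I would run the same recipe parametrically over the compact parameter space. Here the amounts $s_1, s$ and the classes $\bfb$, $2\bfb - \bfe$ are independent of $t$, so if the construction starts from a single taming form $\omega_0$ common to the whole family, the output forms all lie in the fixed class $[\omega_0] + s_1\,\pd(\bfb) + s\,\pd(2\bfb - \bfe)$ and are therefore cohomologous, with the period conditions (being cohomological) holding uniformly in $t$. The bisections in $\bfb$ and the $(-1)$-tori in $2\bfb - \bfe$ vary holomorphically across the complex-analytic family, so the parametric forms of \refthm{thm-Infla} and \refthm{thm-Defla} (see \cite{Bu}) apply; compactness supplies a uniform admissible deflation bound, since $\inf_t \int_{\bfb}\omega_0 > 0$ and the relevant periods depend continuously on $t$.

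The single-structure case is essentially bookkeeping once the two curve classes are identified. The main obstacle is the family statement, and specifically the word \emph{cohomologous}: one must produce a single symplectic form taming the entire compact family \emph{before} modifying it, so that the $t$-independent inflation and deflation leave all members in one common class. I would secure such a form by a blow-down argument combined with \propo{prop-dasha}: passing to the ruling-compatible structures on the geometrically ruled surface $X$, where a single form tames a whole compact family, and transporting it across the blow-up. After that the uniform parametric inflation and deflation go through by compactness, completing the proof.
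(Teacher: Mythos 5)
Your first half (the single-structure statement) is correct, and it uses the same toolkit as the paper, just organized directly on $Z$: the paper also deflates along the smooth elliptic $(-1)$-curve in $2\bfb-\bfe$ supplied by straightness to get the second period condition, while it obtains the first condition by citing the existence of a form with $\int_{\bft_-}\omega<0$ on the minimal model $X_{\cala}$ (which, via \lemma{lem-wc}, is itself proved by inflation along $\bff$ and $\bfb$, i.e.\ essentially your inflation step). Your intersection bookkeeping is right; the only nitpick is that $s$ must be taken in $\bigl(\max\{0,\int_{\bfb-\bfe}\omega\},\ \int_{2\bfb-\bfe}\omega\bigr)$ since deflation requires $s\ge 0$.

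The family half has a genuine gap. Your reduction hinges on producing \emph{a single symplectic form taming the entire compact family before modifying it}, but this is strictly stronger than the lemma's conclusion and fails for general compact families of straight type $\cala$ structures. For instance, take a path $\phi_t\in\diff_0(Z)$ with $\phi_0=\id$ and set $J_t=(\phi_t)_*J_0$: this is a compact family of integrable straight structures, and for a sufficiently violent isotopy no single form tames all $J_t$ (pointwise taming is destroyed by large shears), even though the cohomologous taming family $\theta_t=(\phi_t)_*\theta_0$ demanded by the lemma exists trivially. The ingredients you cite do not repair this: \propo{prop-dasha}\slii applies to families compatible with one \emph{fixed} ruling of the geometrically ruled surface, whereas the blow-downs of a general family carry varying rulings, and transporting a taming form across a family of blow-ups is itself an unproven parametric step. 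Your argument would go through for the particular small loops \eqref{refereemustdie} used later (where, as in \refsubsection{Ell-tw-1}, one form does tame the whole loop), but not for the lemma as stated. The paper avoids needing a global taming form altogether: it takes the pointwise forms $\theta_{t'}$ from the first half, uses openness of the taming condition to get neighbourhoods $U_{t'}$ on which $\theta_{t'}$ tames, extracts a finite subcover by compactness, aligns the cohomology classes by deflating along $\bfe$ down to the common minimum $\varepsilon=\min_i\int_{\bfe}\theta_{t_i}$, and then patches with a partition of unity, using that taming is a convex condition on cohomologous forms and that the period conditions are purely cohomological. If you want to keep your $t$-independent inflation/deflation scheme, you should replace your global-form step by this local-patching argument.
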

\begin{proof}
We first check that a complex straight type $\cala$ surface $Z$ has a taming symplectic form $\theta$ such that $\theta$ satisfies the period conditions.

If $Z$ is type $\cala$ then it is the surface $X_{\cala} \cong S^2 \tilde{\times} T^2$ blown-up once. Since $X_{\cala}$ admits a symplectic structure which satisfies the first period condition, then so does $Z$. Further, the second period condition can be achieved by means of deflation along a smooth elliptic curve in the class $2\bfb - \bfe$; such a curve indeed exists thanks to the straightness of $Z$.

We let $K$ to denote the parameter space for our family $Z_t$, and let $\theta_t$, $t \in K$ be a taming symplectic form on $Z_t$ that satisfies the period condition. For every point $t^\prime \in K$, let $U_{t^\prime} \in K$ be a sufficiently small neighbourhood of $t^\prime \in K$ such that for each $t \in U_{t^\prime}$ 
\[
\text{\normalfont{ $\theta_{t^\prime}$ tames the complex structure in $Z_t$. }}
\]

As $K$ is compact, one may take a finite subcover $U_{t_i}$, $t_i \in I$ of $K$.
The forms $\theta_{I}$ are not necessarily cohomologous because they may have different integrals on the homology class $\bfe$. Set $\varepsilon_{t_i} := \int_{\bfe} \theta_{t_i}$, $t_i \in I$, and $\varepsilon := \min\, \varepsilon_{t_i}$. We now deflate $(Z_{t_i}, \theta_{t_i})$ along the homology class $\bfe$ to get $\int_{\bfe} \theta_{t_i} = \varepsilon$. Thanks to this deflation the forms $\theta_{I}$ become cohomologous and still do satisfy the period conditions.

Finally, set $\hat{\theta}(t) := \sum_{I^{\prime}} \rho_{t_i}(t) \theta_{t_i}$, where the functions $\rho_{t_i} = \rho_{t_i}(t)$ is a partition of unity for the finite open cover $U_{t_{i}}, {t_{i}} \in I$ of $K$. What remains is to verify that $Z_t$ is tamed by $\hat{\theta}(t)$ for every $t \in K$. Pick some $t^{*} \in K$, then there are but finitely many charts $U_{t_{1}}, \ldots, U_{t_{p}}$ that contains the point $t^{*} \in K$. Then $\hat{\theta}(t^{*}) = \rho_{t_1}(t^*) \theta_{t_1} + \ldots + \rho_{t_p}(t^*) \theta_{t_p}$. Since each of $\theta_{t_1}, \ldots, \theta_{t_p}$ tames $Z_{t^{*}}$, then so does $\hat{\theta}(t^{*})$.
\qed
\end{proof}
\smallskip%

From this we have verified assumption $\sf{(A)}$ for the family of straight structures given by \eqref{refereemustdie}.

\subsection{Refined Gromov invariants.}\label{sec-newmoduli}
In this subsection, we work with an almost-complex manifold $(Z,J)$ equipped with a straight structure $J \in \calj_{\text{st}}$, i.e. every $J$-holomorphic curve of class $\bfb \in \sfh_2(Z;\zz)$ in $Z$ is smooth. We also note that such a curve is not multiply-covered, because the homology class $\bfb$ is primitive. The universal moduli space $\calm(\bfb;\calj_{\text{st}})$ of embedded non-parametrized pseudoholomorphic curves of class $\bfb$ is a smooth manifold, and the natural projection $\pr : \calm(\bfb;\calj_{\text{st}}) \to \calj_{\text{st}}$ is a Fredholm map, see \cite{Iv-Sh-1, McD-Sa-3}. 
Given a generic $J \in \calj_{\text{st}}$, the preimage $\pr^{-1}(J)$ is canonically oriented zero-dimensional manifold, see \cite{Tb} where it is explained how this orientation is chosen. 
The cobordism class of $\pr^{-1}(J)$ is independent of a generic $J$, thus giving us a well-defined element of $\Omega_0^{\mib{SO}} = \zz$; the number is equal to $\gr(\bfb)$. 

\slsf{Corollary \ref{cor-gw}} states that $\gr(\bfb) = 3$, and hence $Z$ contains not one but several curves in the class $\bfb$. Once we restricted almost-complex structures to those with the straightness property, the following modification of Gromov invariants can be proposed: given the image $G$ of a certain homomorphism $\zz^2 \to \sfh_1(Z;\zz)$, instead of counting pseudoholomorphic curves $C$ such that $[C] = \bfb$, we will count curves $C$ such that $[C] = \bfb$ and the embedding $i : C \hookrightarrow Z$ satisfies $\im i = G$.  
The definitions of Gromov invariants 
$\gr(\bfb,G)$, moduli space $\calm(\bfb,G;\calj_{\text{st}})$, and so forth are completely analogous to those in ``usual'' theory of Gromov invariants.

Suppose $J$ is an integrable straight type $\cala$ structure, then the complex surface $(Z,J)$ contains precisely 3 smooth elliptic curves $C_1$, $C_2$, and $C_3$ in the homology class $\bfb$. We denote by $G_k$ the subgroup of $\sfh_1(Z;\zz)$ generated by cycles on $C_k$; these subgroups $G_k$ are pairwise distinct, see \refsubsection{sec-complex}.

It is clear now that the space $\calm(\bfb;\calj_{\text{st}})$ is disconnected and can be presented as the union
\[
\displaystyle{
\calm(\bfb;\calj_{\text{st}}) = 
\bigsqcup\limits_{k=1}^{3} \calm(\bfb,G_{k};\calj_{\text{st}
}). 
}
\]
We define the \slsf{moduli space of bisections} to be the fiber product
\[
\displaystyle{
\calm_{3B} = 
\left\{ (x_{1}, x_{2}, x_{3})\ |\  x_{k} \in \calm(\bfb,G_{k};\calj_{\text{st}
}),\ \pr(x_1) = \pr(x_2) = \pr(x_3) ) \right\}. 
}
\]
Similarly to $\calm(\bfb;\calj_{\text{st}})$, the moduli space $\calm_{3B}$ is a smooth manifold and the projection $\pr : \calm_{3B} \to \calj_{\text{st}}$
is a smooth map. We close this section by stating an obvious property of the projection map that we shall use in the sequel. 
\begin{lem}\label{lem-diffcomplex}
The projection map $\pr : \calm_{3B} \to \calj_{\text{st}}$ is a diffeomorphism, when is restricted to the subset of integrable straight type $\cala$ complex structures.
\end{lem}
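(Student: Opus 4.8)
The plan is to show that, over the locus of integrable straight type $\cala$ structures, the projection $\pr$ is simultaneously a bijection and a local diffeomorphism, and hence a diffeomorphism. Write $\calj_{\text{st}}^{\integ} \subset \calj_{\text{st}}$ for the subset of integrable straight type $\cala$ structures and $\calm_{3B}^{\integ} := \pr\inv(\calj_{\text{st}}^{\integ})$; the assertion is that $\pr \colon \calm_{3B}^{\integ} \to \calj_{\text{st}}^{\integ}$ is a diffeomorphism. Since $\calm_{3B}$ and $\calj_{\text{st}}$ are smooth manifolds and $\pr$ is smooth (see \refsubsection{sec-newmoduli}), it suffices to check that $\pr$ maps $\calm_{3B}^{\integ}$ bijectively onto $\calj_{\text{st}}^{\integ}$ and that its differential is an isomorphism at every point of $\calm_{3B}^{\integ}$.

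For bijectivity I would argue as follows. Given $J \in \calj_{\text{st}}^{\integ}$, the surface $(Z,J)$ is the blow-up of $X_{\cala}$ at a point lying on no bisection, so the only $J$-holomorphic curves in the class $\bfb$ are the strict transforms of the three smooth bisections $C_1, C_2, C_3$ of $X_{\cala}$, whose image subgroups in $\sfh_1(Z;\zz)$ are precisely the three pairwise distinct groups $G_1, G_2, G_3$; see \refsubsection{sec-complex}. Consequently each fibre $\pr\inv(J) \cap \calm(\bfb, G_k; \calj_{\text{st}})$ is the single point $C_k$, so $\pr\inv(J) \cap \calm_{3B}$ is the single triple $(C_1, C_2, C_3)$. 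This shows that $\pr$ restricts to a bijection $\calm_{3B}^{\integ} \to \calj_{\text{st}}^{\integ}$.

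For the differential, recall again from \refsubsection{sec-newmoduli} that the universal moduli spaces are smooth and the projections Fredholm; the index of $\pr$ equals the virtual dimension of $\bfb$-curves, which is $0$ since $\bfb^2 = 0$ and $K^{*}(\bfb) = (\bfb - \bfe)\cdot\bfb = 0$. Thus each $\calm(\bfb, G_k; \calj_{\text{st}})$ has the same dimension as $\calj_{\text{st}}$, and it is enough to show $\ker D\pr = 0$ at $C_k$. This kernel is the space of infinitesimal deformations of $C_k$ with $J$ held fixed, i.e. $H^0(C_k, N_{C_k})$. By adjunction $N_{C_k}$ has degree $\bfb^2 = 0$ on the genus-one curve $C_k$, and it is holomorphically non-trivial by the multiple-fibre analysis of Suwa's model, the very fact exploited in \slsf{Corollary \ref{cor-gw}}. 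A non-trivial degree-zero line bundle on an elliptic curve has no nonzero sections, so $H^0(C_k, N_{C_k}) = 0$; hence $D\pr$ is injective, and by the dimension count an isomorphism, at each $C_k$. For the fibre product $\calm_{3B}$ the differential at $(C_1, C_2, C_3)$ is the fibre product of these three isomorphisms over $T\calj_{\text{st}}$, and is therefore itself an isomorphism.

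The main obstacle is this regularity computation: the whole argument rests on recognizing $N_{C_k}$ as a non-trivial line bundle of degree zero, which is what kills its space of sections and rigidifies the curve. Granting it, $\pr$ restricts to a smooth bijection with everywhere-invertible differential between manifolds of equal dimension, and is therefore a diffeomorphism onto $\calj_{\text{st}}^{\integ}$, as claimed.
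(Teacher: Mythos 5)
The paper offers no proof of this lemma at all --- it is introduced with the words \enquote{an obvious property of the projection map} and left unproved --- so there is nothing to compare against line by line; what you have done is supply the missing argument, and it is correct. Your two ingredients are exactly the facts the paper establishes nearby: bijectivity of $\pr$ over the integrable straight locus is the statement, already asserted at the start of \refsubsection{sec-newmoduli}, that an integrable straight type $\cala$ surface contains precisely three smooth elliptic curves in the class $\bfb$, with pairwise distinct subgroups $G_1,G_2,G_3$ (which is what forces each fibre of $\calm(\bfb,G_k;\calj_{\text{st}})$ to be a single point); and invertibility of the differential rests on the holomorphic non-triviality of the degree-zero normal bundles $N_{C_k}$, the same fact the paper invokes in the proof of \refcorol{cor-gw} to get the contributions $\pm 1$. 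Your regularity computation is sound: $\deg N_{C_k}=\bfb^2=0$ on a genus-one curve, non-triviality kills $H^0(C_k,N_{C_k})$, and since $\pr$ is Fredholm of index $0$ (consistent with $\bfb^2=0$ and $K^*(\bfb)=(\bfb-\bfe)\cdot\bfb=0$, using $c_1(Z)=\bfb-\bfe$), vanishing of the kernel forces vanishing of the cokernel, so $D\pr$ is an isomorphism at each $C_k$ and hence at the triple in the fibre product. One further point in your favour: since the integrable structures do not form an open subset of $\calj_{\text{st}}$, the lemma's phrase \enquote{diffeomorphism when restricted} needs interpreting, and your reading --- bijectivity onto the integrable locus plus invertibility of the ambient differential along it --- is the correct one; it is also precisely what the paper later uses in \lemma{lem-key1}, where only the single-point-fibre property over the loop $\wt{J}(t)$ of integrable structures (hence connectedness of $\wt{\mib{s}}(t)$) is needed.
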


\subsection{Loops $\calm_{3B}$.}\label{sec-cocycle} 

The map $\nu : \diff_0(Z) \to \calj_{\text{st}}$ defined by
\[
\diff_{0}(Z) \xrightarrow{\nu} \calj(Z,\omega) : f \to f_{*} J,
\]
can be naturally lifted to a map $\diff_0(Z) \to \calm_{3B}$. Indeed, take a point $\mib{s} \in \calm_{3B}$, which is a quadruple
$[J, B_1, B_2, B_3](\mib{s})$ consisting of an almost-complex structure $J(\mib{s}) \in \calj_{\text{st}}$ on $Z$ and a triple of smooth $J(\mib{s})$-holomorphic elliptic curves $B_1(\mib{s})$, $B_2(\mib{s})$, and $B_3(\mib{s})$ in $Z$. Then one can define
\[
\displaystyle{
\diff_{0}(Z) \xrightarrow{\nu} \calm_{3B} : f \to [f_{*} J, f(B_1), f(B_2), f(B_3)].
}
\]
Here we construct an element of $\pi_1(\calm_{3B})$ that does not lie in the image of $\nu_{*} \colon \pi_1(\diff_0(Z)) \to \pi_1(\calm_{3B})$.
\medskip%

To start, we consider the tautological bundle $\calz \cong \calm_{3B} \times Z$ over $\calm_{3B}$ whose fiber over a point $x \in \calm_{3B}$ is the almost-complex manifold $(Z,J(\mib{s}))$.
By \slsf{Lemma \ref{lem-martin}} every almost-complex manifold $(Z,J(\mib{s}))$ contains a unique smooth rational $(-1)$-curve $S(\mib{s})$ in the class $\bff-\bfe$.  
Thus, one associates to $\calz$ an auxiliary bundle $\cals$ whose fiber over $x \in \calm_{3B}$ is the rational curve $S(\mib{s})$.
Note that each $B_i(\mib{s})$ intersects $S(\mib{s})$ at precisely 2 distinct points denoted by $P_{i,1}$ and  $P_{i,2}$. Hence we can mark out 3 distinct pairs of points $(P_{i,1}, P_{i,2})$, $i = 1,2,3$ on each fiber $S(\mib{s})$ of $\cals$.
Besides that, every $(Z,J(\mib{s}))$ contains a unique smooth rational curve $E(\mib{s})$ in the class $\bfe$. The curve $E(\mib{s})$ intersects $S(\mib{s})$ at precisely one point, say $Q(\mib{s})$. This point $Q$ does not coincide with any of the point $P_{i,1},P_{i,2}$, because $J(x)$ is assumed to be a straight one.
Therefore $\cals$ can be considered as a fiber bundle over $\calm_{3B}$ whose fiber is the rational curve $S(\mib{s})$ with 7 distinct marked points, partially ordered as 
\begin{equation}\label{order}
    \left( \left\{ P_{1,1}, P_{1,2} \right\}, \left\{ P_{2,1}, P_{2,2} \right\}, \left\{ P_{3,1}, P_{3,2} \right\}, Q  \right)
\end{equation}
As such, there is an obvious map
\[
\lambda \colon \calm_{3B} \to \scrm
\]
sending $S(\mib{s})$ to the corresponding point in the moduli space of $7$ points in $\cp^1$, partially ordered as 
\eqref{order}. Notice that the space $\scrm$ is also the moduli space of $6$ points on $\cc$, partially ordered as 
$\left( \left\{ P_{1,1}, P_{1,2} \right\}, \left\{ P_{2,1}, P_{2,2} \right\}, \left\{ P_{3,1}, P_{3,2} \right\}\right)$.
One considers $\scrm$ as a quotient $\conf_6(\cc)/\Aff(1,\cc)$, where $\conf_6(\cc)$ is the configuration space of sextuples $(z_1,\ldots,z_6) \in \cc^6$, $z_i \neq z_j$ with the identifications 
\[
(z_1,z_2, \ldots) \sim (z_2,z_1, \ldots),\ (\ldots, z_3,z_4,\ldots) \sim (\ldots, z_4,z_3,\ldots),\ (\ldots, z_5, z_6 ) \sim (\ldots, z_6, z_5 ).
\]
The homotopy exact sequence for $\conf_6(\cc) \to \scrm$ reads
\[
\zz \cong \pi_1(\Aff(1,\cc)) \xrightarrow{} \pi_1(\conf_6(\cc)) \xrightarrow{} \pi_1(\scrm) \xrightarrow{} 1 = \pi_0(\Aff(1,\cc)).
\]
Let $\delta^2$ be the element of $\pi_1(\conf_6(\cc))$ coming from $\pi_1(\Aff(1,\cc))$. It is known that $\delta^2$ generates the center of $\pi_1(\conf_6(\cc))$ (and even the center of a larger group, the braid group on $6$ strands.)

Let $\mib{\gamma} \colon [0,1] \to \scrm$ be the loop given by 
\[
(P_{1,1}(t),P_{1,2}(t),\,P_{2,1}(t),P_{2,2}(t),\,P_{3,1}(t),P_{3,2}(t),\, Q(t)) = (0,\infty,\,1,-1,\,i,-i,\,\varepsilon\, e^{2 \pi i t})
\]
with respect to some inhomogeneous coordinate on $\cp^1$. 
Introducing the transformation 
\[
z \to \dfrac{\varepsilon e^{2 \pi i t} z - 1}{ \varepsilon e^{2 \pi i t} - z },
\]
in which $Q(t) = \infty$, one lifts $\mib{\gamma}$ to $\conf_6(\cc)$ as
\[
z_1(t) = -\dfrac{1}{\varepsilon e^{2 \pi i t}},\  z_2(t) = - \varepsilon e^{2 \pi i t},\  
z_3(t) = 1,\  z_4(t) = -1, 
\]
\[
z_5(t) = \dfrac{\varepsilon e^{2 \pi i t} i - 1}{ \varepsilon e^{2 \pi i t} - i },\  
z_6(t) = -\dfrac{\varepsilon e^{2 \pi i t} i + 1}{ \varepsilon e^{2 \pi i t} + i }.
\]
It is not hard to show that the homology class of this loop is non-zero in $\sfh_1(\conf_6(\cc));\rr)$ and not a multiple of $\delta^2$. Following \cite{Ar}, one can prove this by integrating 
the differential form
\[
\mib{\alpha} := \dfrac{1}{2 \pi i}\dfrac{\text{\normalfont{d}}\,z_1 - \text{\normalfont{d}}\,z_2}{z_1 - z_2} - 
\dfrac{1}{2 \pi i}\dfrac{\text{\normalfont{d}}\,z_3 - \text{\normalfont{d}}\,z_4}{z_3 - z_4},
\]
for which $\int_{\delta^2} \mib{\alpha} = 0$ yet $\int_{\gamma} \mib{\alpha} = -1$.
As such, one obtains: $[\mib{\gamma}] \neq 0$ in $\sfh_1(\scrm;\rr)$.
\smallskip%

Using the family $\scrz \to \cp^1$ from \lemma{lem-familycp}, we get a
loop $\mib{s} \colon [0,1] \to \calm_{3B}$ with $\lambda(\mib{s}(t)) = \mib{\gamma}(t)$. The class $[\mib{s}] \in \pi_1(\calm_{3B})$ does not lie in $\im \nu_{*}$, as for it were, that would imply that $[\mib{s}] \in \ker \lambda_*$. (Here we used the inclusion $\im \nu_{*} \subset \ker \lambda_*$ following from the fact that $\lambda$ is $\diff_0(Z)$-invariant.) 
\smallskip%

If $\mib{\gamma}$ was given either by 
\[
(P_{1,1}(t),P_{1,2}(t),\,P_{2,1}(t),P_{2,2}(t),\,P_{3,1}(t),P_{3,2}(t),\, Q(t)) = (0,\infty,\,1 + \varepsilon e^{2 \pi i t},-1,\,i,-i,\,0),\ 
\text{\normalfont{or}}
\]
\[
(P_{1,1}(t),P_{1,2}(t),\,P_{2,1}(t),P_{2,2}(t),\,P_{3,1}(t),P_{3,2}(t),\, Q(t)) = (0,\infty,\,1,-1,\,i + \varepsilon e^{2 \pi i t} ,-i,\,0),
\]
then a similar argument would work to get another non-contractible loop in $\calm_{3B}$.

\subsection{Loops in $\calj_{\text{st}}$.}\label{sec-loops}
Here we construct an element of $\pi_1(\calj_{\text{st}})$ that does not lie in the image of $\nu_{*} \colon \pi_1(\diff_0(Z)) \to \pi_1(\calj_{\text{st}})$.
\medskip%

Let $J(t)$ be the loop of integrable structures with $\pr(\mib{s}(t))=J(t)$ for the loop $\mib{s}(t)$ constructed in \refsubsection{sec-cocycle}.

\begin{lem}\label{lem-key1}
The class $[J] \in \pi_1(\calj_{\text{st}})$ does not lie in $\im \nu_{*}$. 
\end{lem}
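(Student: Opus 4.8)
The plan is to transport the argument of \refsubsection{sec-cocycle} from the moduli space $\calm_{3B}$ down to $\calj_{\text{st}}$ along the projection $\pr$. Recall that the map $\nu\colon\diff_0(Z)\to\calj_{\text{st}}$ lifts to $\wt\nu\colon\diff_0(Z)\to\calm_{3B}$, $f\mapsto[f_*J,f(B_1),f(B_2),f(B_3)]$, with $\pr\circ\wt\nu=\nu$; hence on fundamental groups $\im\bigl(\nu_*\colon\pi_1(\diff_0(Z))\to\pi_1(\calj_{\text{st}})\bigr)=\pr_*\,\im\bigl(\wt\nu_*\colon\pi_1(\diff_0(Z))\to\pi_1(\calm_{3B})\bigr)$. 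By construction $[J]=\pr_*[\mib{s}]$, while in \refsubsection{sec-cocycle} the loop $\mib{s}$ was shown to satisfy $\lambda_*[\mib{s}]=[\mib{\gamma}]\neq0$ in $\sfh_1(\scrm;\rr)$ and $\im\wt\nu_*\subset\ker\lambda_*$; in particular $[\mib{s}]\notin\im\wt\nu_*$. The goal is to leverage these two facts through $\pr$.

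The core of the proof is to produce a map $\bar\lambda\colon\calj_{\text{st}}\to\scrm$, well defined up to homotopy, with $\bar\lambda\circ\pr\simeq\lambda$. To this end I would upgrade \lemma{lem-diffcomplex} from the integrable locus to all of $\calj_{\text{st}}$, showing that $\pr\colon\calm_{3B}\to\calj_{\text{st}}$ is a diffeomorphism. By straightness every $J$-holomorphic representative of $\bfb$ is embedded and falls into one of the three homology types $G_1,G_2,G_3$; each projection $\calm(\bfb,G_k;\calj_{\text{st}})\to\calj_{\text{st}}$ is an index-zero Fredholm map, and by \slsf{Corollary \ref{cor-gw}} together with the integrable model of \refsubsection{sec-complex} (one curve in each $G_k$, counted with sign $+1$) its degree is $\gr(\bfb,G_k)=1$. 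Positivity of intersections for these embedded square-zero tori should rule out cancellation, so that $\pr^{-1}(J)$ consists of a single triple for every $J\in\calj_{\text{st}}$; automatic regularity then makes $\pr$ a local diffeomorphism, and Gromov compactness inside the straight locus makes it proper. Hence $\pr$ is a diffeomorphism, and I may set $\bar\lambda:=\lambda\circ\pr^{-1}$.

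Granting this, the conclusion parallels \refsubsection{sec-cocycle}: since $\bar\lambda\circ\pr\simeq\lambda$ we get $\bar\lambda_*[J]=\bar\lambda_*\pr_*[\mib{s}]=\lambda_*[\mib{s}]=[\mib{\gamma}]\neq0$, whereas $\bar\lambda_*\bigl(\im\nu_*\bigr)=\bar\lambda_*\pr_*\bigl(\im\wt\nu_*\bigr)=\lambda_*\bigl(\im\wt\nu_*\bigr)=0$. Thus $\im\nu_*\subset\ker\bar\lambda_*$ but $[J]\notin\ker\bar\lambda_*$, so $[J]\notin\im\nu_*$, as claimed. (Equivalently, once $\pr$ is a diffeomorphism, $\pr_*$ is an isomorphism and $[J]=\pr_*[\mib{s}]\in\pr_*\,\im\wt\nu_*=\im\nu_*$ would force $[\mib{s}]\in\im\wt\nu_*$, contradicting \refsubsection{sec-cocycle}.) The main obstacle is exactly the global step of the middle paragraph: proving that over \emph{every} straight structure the triple of bisections is unique, rather than only over the integrable locus handled by \lemma{lem-diffcomplex}. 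This is where the positive count $\gr(\bfb,G_k)=1$ and the embeddedness forced by straightness must be combined with care, since a priori a single straight structure could carry several curves in one class $G_k$ cancelling in the signed count, which would break both the injectivity of $\pr$ and the descent of $\lambda$.
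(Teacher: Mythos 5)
There is a genuine gap, and you have put your finger on it yourself: the reduction to the claim that $\pr\colon\calm_{3B}\to\calj_{\text{st}}$ is a \emph{global} diffeomorphism is precisely the step that cannot be closed with the tools you name. Since $\bfb^2=0$, positivity of intersections forces two distinct embedded $J$-holomorphic tori in the class $\bfb$ (even with the same $G_k$) to be \emph{disjoint}, so it rules out nothing: a single straight $J$ could a priori carry several such curves which cancel in the signed count $\gr(\bfb,G_k)=1$. Nor is regularity automatic here: $c_1(Z)\cdot\bfb=0$ and the curves have genus one, so the relevant normal $\dbar$-operator lives on a degree-zero line bundle over a torus and has kernel exactly when that bundle is holomorphically trivial. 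The regularity of the three multiple fibers in the integrable model is a special fact (their normal bundles are holomorphically non-trivial, cf.\ the proof of \refcorol{cor-gw}), not a statement valid over all of $\calj_{\text{st}}$; index zero alone does not make $\pr$ a local diffeomorphism, only a Fredholm map. Hence both the injectivity and the local invertibility of $\pr$ that you need to define $\bar\lambda=\lambda\circ\pr^{-1}$ are unproven, and the descent of $\lambda$ to $\calj_{\text{st}}$ collapses with them.

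The paper's proof shows how to avoid ever inverting $\pr$ away from the integrable locus. Arguing by contradiction, suppose $J(t)$ is homotopic to $\wt J(t):=f(t)_*J(0)$ for a loop $f$ in $\diff_0(Z)$; the homotopy traces out a tube $T\subset\calj_{\text{st}}$, which by the Sard--Smale theorem may be taken transverse to the Fredholm projection $\pr$. Then $\pr^{-1}(T)$ is a compact oriented surface (properness of $\pr$ over the straight locus follows from Gromov compactness together with the definition of straightness) whose boundary is $\pr^{-1}(J(t))\cup\pr^{-1}(\wt J(t))=\mib{s}\cup\wt{\mib{s}}$. The identification of these boundary fibers is the \emph{only} place where \lemma{lem-diffcomplex} enters, and there it suffices as stated, because both boundary loops consist of integrable structures: $J(t)$ by construction, and $\wt J(t)$ as the pushforward of the integrable $J(0)$. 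The cobordism gives $[\mib{s}]=[\wt{\mib{s}}]$ in $\sfh_1(\calm_{3B};\zz)$, contradicting $\lambda_*[\mib{s}]\neq 0$ since $\lambda$ is constant along $\wt{\mib{s}}$ by $\diff_0(Z)$-invariance. In short, homological invariance under a cobordism produced by transversality replaces the global uniqueness statement your argument requires; your first and last paragraphs are sound, and substituting this cobordism step for your middle paragraph repairs the proof.
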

\begin{proof}
Assume the contrary, i.e. that there exists a family $f \colon [0,1] \to \diff_0(Z)$, $f(0) = f(1) = \id$ such that $J(t)$ is homotopic to $\wt{J}(t) := f(t)_{*} J(0)$. We join $J(t)$ and $\wt{J}(t)$ with a tube $T \subset \calj_{\text{st}}$.
By Sard-Smale theorem we can arrange that $T$ is transverse to $\pr$. Thus, the preimage $\pr^{-1}(T)$ is a smooth orientable surface that bounds $\mib{s}(t) \cup \wt{\mib{s}}(t)$. 
Note that $\wt{\mib{s}}(t) := \pr^{-1}(\wt{J}(t))$ is connected thanks to \lemma{lem-diffcomplex}. It follows that $[\wt{\mib{s}}] = [\mib{s}]$ in $\sfh_1(\calm_{3B};\zz)$. This is a contradiction, as $[\mib{s}]$ does not lie in $\ker \lambda_{*}$, whereas $\lambda$ itself is constant on $\wt{\mib{s}}(t)$. \qed
\end{proof}

\subsection{Let's twist again.}
Here we outline the proof of \slsf{Theorem} \ref{thm-notvanish}, referring the reader to the previous subsections for details.

Let $X$ be a type $\cala$ surface, see \refsubsection{sec-complex}.
It follows from \refthm{thm-fibering} hat $X$ contains a triple of smooth elliptic curves $C_1, C_2$, and $C_3$ in the homology class $\bfb \in \sfh_2(X; \mathbb{Z})$, $[\bfb]^2 = 0$.
The procedure given in \refsubsection{Ell-tw-1} shows that there are three elliptic twists for $X \# \overline{\cp}^2$, see also \lemma{lem-kahst}. 
Denote the corresponding loops by $J_{C_1}, J_{C_2}$, and $J_{C_3}$; they are contained in the space $\calj_{\text{st}}$ of the straight almost-complex structures, see \refsubsection{sec-straight}. 
We prove these loops do not lie in the image of $\nu_{*} : \pi_1(\diff_0(X \# \overline{\cp}^2)) \to \pi_1(\calj_{\text{st}})$, see \refsubsection{sec-loops}. 

Using \lemma{lem-kahst}, we find a symplectic form $\theta$ with $\int_{\bfb - \bfe} \theta \leq 0$ such that 
$J_{C_i} \in \calj(X \# \overline{\cp}^2, \Theta)$, $i = 1,2,3$. Here $\Theta$ stands for $\Omega(X.\theta)$.
Since the inclusion $\calj(X \# \overline{\cp}^2, \Theta) \subset \calj_{\text{st}}$ is equivariant w.r.t. to the natural action of $\diff_0(X \# \overline{\cp}^2)$ on these spaces, it follows that $J_{C_i}$ do not lie in the image of $\nu_{*} : \pi_1(\diff_0(X \# \overline{\cp}^2)) \to \calj(X \# \overline{\cp}^2, \Theta)$, and the theorem follows.

\input{bib-ell-sympl.tex}

\end{document}